\def\eqref{\@ifstar\@eqref\@@eqref}
\def\@eqref#1{\textup{\tagform@{\ref*{#1}}}}
\def\@@eqref#1{\textup{\tagform@{\ref{#1}}}}
\newcommand\@dotsep{4.5}
\def\@tocline#1#2#3#4#5#6#7{\relax
  \ifnum #1>\c@tocdepth
  \else
    \par \addpenalty\@secpenalty\addvspace{#2}%
    \begingroup \hyphenpenalty\@M
    \@ifempty{#4}{%
      \@tempdima\csname r@tocindent\number#1\endcsname\relax
    }{%
      \@tempdima#4\relax
    }%
    \parindent\z@ \leftskip#3\relax \advance\leftskip\@tempdima\relax
    \rightskip\@pnumwidth plus1em \parfillskip-\@pnumwidth
    #5\leavevmode\hskip-\@tempdima #6\relax
    \leaders\hbox{$\m@th
      \mkern \@dotsep mu\hbox{.}\mkern \@dotsep mu$}\hfill
    \hbox to\@pnumwidth{\@tocpagenum{#7}}\par
    \nobreak
    \endgroup
  \fi}
\def\l@subsection{\@tocline{2}{0pt}{20pt}{5pc}{}}
\def\l@subsubsection{\@tocline{2}{0pt}{30pt}{5pc}{}}
\definecolor{lred}{RGB}{220,60,60}
\theoremstyle{plain}
\newtheorem{Thm}{Theorem}[section]
\newtheorem*{Thm*}{Theorem}
\newtheorem{Cor}[Thm]{Corollary}
\newtheorem*{CorA}{\Cref{corSupp}}
\newtheorem*{CorB}{\Cref{corRegProj}}
\newtheorem{Lem}[Thm]{Lemma}
\newtheorem{Prop}[Thm]{Proposition}
\theoremstyle{definition}
\newtheorem{Def}[Thm]{Definition}
\newtheorem{Ex}[Thm]{Example}
\newtheorem{Ques}{Question}
\theoremstyle{remark}
\newtheorem*{Rmk}{Remark}
\DeclareMathOperator{\im}{im}
\DeclareMathOperator{\spec}{Spec}
\DeclareMathOperator{\proj}{Proj}
\DeclareMathOperator{\coker}{coker}
\DeclareMathOperator{\SL}{SL}
\DeclareMathOperator{\GL}{GL}
\DeclareMathOperator{\Sp}{Sp}
\DeclareMathOperator{\B}{B}
\DeclareMathOperator{\U}{U}
\DeclareMathOperator{\Lie}{Lie}
\DeclareMathOperator{\reg}{Reg}
\DeclareMathOperator{\supp}{Supp}
\DeclareMathOperator{\jtype}{JType}
\DeclareMathOperator{\rank}{rank}
\DeclareMathOperator{\Inj}{Inj}
\renewcommand{\hom}{\mathrm{Hom}}
\newcommand{\red}{\mathrm{red}}
\newcommand{\slt}[1][2]{\mathfrak{sl}_{#1}}
\newcommand{\set}[1]{\left\{#1\right\}}
\newcommand{\id}[1][\mbox{}]{\mathrm{id}_{#1}}
\newcommand{\catd}[2]{{\tt #2}(#1)}
\newcommand{\gim}[2][\mbox{}]{\im\Theta^{#1}_{#2}}
\newcommand{\gker}[2][\mbox{}]{\ker\Theta^{#1}_{#2}}
\newcommand{\gcoker}[2][\mbox{}]{\coker\Theta^{#1}_{#2}}
\newcommand{\Ho}[1][1]{\mathcal H^{[#1]}}
\newcommand{\F}{\mathscr F}
\newcommand{\K}[1][1]{\ker(\Ho)}
\newcommand{\PG}[1][G]{\mathbb P(#1)}
\newcommand{\OPG}[1][\mbox{}]{\mathcal O_{\mathbb P(G)#1}}
\newcommand{\coh}{\tt Coh}
\newcommand{\cohPG}{\mathtt{Coh}{(\mathbb P(G))}}
\newcommand{\D}[1]{\mathsf D(#1)}
\newcommand{\Db}[1]{\mathsf D^b(#1)}
\newcommand{\thick}[2][\mbox{}]{\mathsf{thick}_{#1}(#2)}
\newcommand{\KInj}[1]{\mathsf{K}(\Inj #1)}
\newcommand{\kalg}[1][k]{#1\text{-\tt alg}}
\newcommand{\kgrp}[1][k]{#1\text{-\tt grp}}
\begin{document}

\title[Detecting projectivity in sheaves associated to representations]{Detecting projectivity in sheaves associated to representations of infinitesimal groups}

\author{Jim Stark}

\thanks{The author was partially supported by the NSF grant DMS-0953011}

\address{Department of Mathematics\\
   University of Washington\\
   Seattle, WA 98105}

\email{jstarx@gmail.com}

\date{5 September, 2013}

\begin{abstract}
Let $G$ be an infinitesimal group scheme of finite height $r$ and $V(G)$ the scheme which represents $1$-parameter subgroups of $G$.  We consider sheaves over the projective support variety $\PG$ constructed from a $G$-module $M$.  We show that if $\PG$ is regular then the sheaf $\Ho(M)$ is zero if and only if $M$ is projective.  In general, $\Ho$ defines a functor from the stable module category and we prove that its kernel is a thick triangulated subcategory.  Finally, we give examples of $G$ such that $\PG$ is regular and indicate, in characteristic $2$, the connection to the BGG correspondence.  Along the way we will provide new proofs of some known results and correct some errors in the literature.
\end{abstract}

\maketitle

\tableofcontents

\setcounter{section}{-1}
\section{Introduction}

The theory of support varieties/schemes for an infinitesimal group scheme $G$ was established by Suslin, Friedlander, and Bendel~\cite{bfs1paramCoh, bfsSupportVarieties} in 1997.  It unifies work done by Carlson~\cite{carlsonCohVar} and Friedlander and Parshall~\cite{friedparshallSupport} in the 80's on the spectrum of the cohomology of, respectively, an elementary abelian group and a restricted Lie algebra.  The projective version, $\PG$, of these support schemes provides a useful geometric setting on which the more general notion of local Jordan type can be defined.  This includes, in particular, the definition of modules of constant Jordan type which have received considerable attention in recent years.  Friedlander and Pevtsova~\cite{friedpevConstructions} and Benson and Pevtsova~\cite{benPevtVectorBundles} have introduced various geometric constructions which, given a representation $M$ of $G$, yield coherent sheaves over $\PG$.  Information about the local Jordan type of $M$ is reflected in the properties of these sheaves, most notably that modules of constant Jordan type are those modules for which certain constructions produce exclusively vector bundles.  Moreover, one can even test if $M$ belongs to the often studied class of endotrivial modules, or whether $M$ has the equal images or equal kernels property introduced by Carlson, Friedlander, and Pevtsova~\cite{cfsZpZpModules}, by testing whether certain sheaves produced from $M$ are locally free.  In the current paper we investigate two open questions concerning $G$-modules and their associated sheaves: How the projectivity of a module $M$ is reflected in a particular sheaf, $\Ho(M)$, and whether the support of $\Ho(M)$ can be given a representation theoretic description.

We begin in \cref{secRev} by summarizing the definition and relevant results concerning support schemes and their projective version $\PG$.  We will construct the global operator $\Theta_M$ and give several relevant examples.  In \cref{secJType} we define and discuss the local Jordan type of $M$ in terms of partitions and Young diagrams.  Most notably we define the support $\PG_M \subseteq \PG$ as the set of points at which the global operator $\Theta_M$ does not yield a projective $K[t]/t^p$-module.

In \cref{secAssSh} we define the kernel, image, and cokernel sheaves associated to the global operator $\Theta_M$ and we define the sheaves
\[\F_i(M) = \frac{\gker{M} \cap \gim[i-1]{M}}{\gker{M} \cap \gim[i]{M}}\]
for $1 \leq i \leq p$.  The $\F_i(M)$ were first defined for the case of an elementary abelian group of rank $r$~\cite{benPevtVectorBundles}.  There we have $\PG = \mathbb P^{r-1}$ and Benson and Pevtsova prove a realization theorem for vector bundles over this projective space; specifically, if $\mathscr G$ is a vector bundle of rank $s$ then, up to a Frobenius twist, $\mathscr G$ can be realized as $\F_1(M)$ for some module $M$ of constant Jordan type $[p]^n[1]^s$.  When $p = 2$ the Frobenius twist is not present and Benson and Pevtsova note that their result can be thought of as a version of the Bern\u{s}te\u{\i}n, Gel'fand, Gel'fand (BGG) correspondence between modules for an exterior algebra and vector bundles over $\mathbb P^{r-1}$ (we discuss this connection in \cref{secDG}).

The $\F_i(M)$ reflect many properties of the module $M$; in particular, we give a local version of the result by Benson and Pevtsova that a module $M$ has constant Jordan type if and only if each $\F_i(M)$ is locally free and, moreover, that the ranks of the $\F_i(M)$ yield the Jordan type of $M$.  In addition to this result, the $\F_i(M)$ are significant in that they are isomorphic to the quotients of filtrations of many of the sheaves that we are interested in.

The main theoretical results are contained in \cref{secHo}.  Here we introduce the sheaves
\[\Ho[i](M) = \frac{\gker[i]{M}}{\gim[p-i]{M}}\]
and ask the following two questions:

\begin{Ques}
Is $M$ projective if and only if $\Ho(M) = 0$?
\end{Ques}

\begin{Ques}
Given a $G$-module $M$ do we have $\PG_M = \supp\Ho(M)$?
\end{Ques}

The sheaves $\Ho[i](M)$ were first defined by Friedlander and Pevtsova using the notation $\mathcal M^{[i]}$~\cite[5.14]{friedpevConstructions}.  They were motivated by work of Duflo and Serganova~\cite{dufloSerganova} on Lie superalgebras.  Serganova has indicated in private correspondence that \cref*{quesMain} has an affirmative answer in the case of a Lie superalgebra $\mathfrak g_0 \oplus \mathfrak g_1$ which satisfies the additional assumption that the self commuting elements of $\mathfrak g_1$ span $\mathfrak g_1$.  This is a mild assumption satisfied, in particular, by any simple classical Lie superalgebra except $\mathfrak{osp}(1|2n)$.  Duflo and Serganova note that in case $\mathfrak g = \mathfrak{osp}(1|2n)$ every finite-dimensional $\mathfrak g$-module is projective anyway~\cite[3.6]{dufloSerganova}.

In \cref{thmHoProj} we obtain a sizable list of conditions, any one of which implies that $M$ is projective under the assumption $\Ho(M) = 0$.  We show that if $\reg\PG$ is the regular locus of $\PG$ then $\PG_M \cap \reg\PG \subseteq \supp\Ho(M)$.  This answers \cref*{quesSupp}, and consequently \cref*{quesMain}, in the affirmative at least in the case that $\PG$ is smooth:

\begin{CorA}
If $\PG$ is regular then $\PG_M = \supp\Ho(M)$.
\end{CorA}

\begin{CorB}
If $\PG$ is regular then $M$ is projective if and only if $\Ho(M) = 0$.
\end{CorB}

For the remainder of \cref{secHo} we consider the full subcategory $\K$ of the stable module category $\catd{G}{\underline{mod}}$ consisting of those objects $M$ which satisfy $\Ho(M) = 0$.  We show that this is a thick triangulated subcategory of $\catd{G}{\underline{mod}}$ and we give an example where $\K$ has infinite representation type, thus providing, in general, an answer to Questions \ref{quesMain} and \ref{quesSupp} in the negative.

In \cref{secReg} we give additional explicit computations of $\PG$ for various $G$.  In light of the results of \cref{secHo} we focus on examples of $G$ such that $\PG$ is regular.  We end, in \cref{secDG}, by reminding the reader of the BGG correspondence.  We give a generalization to DG-modules over a polynomial ring which is a slight modification of the correspondence obtained by Benson et al.~\cite[5.5]{bikLocCohSup} (which was in turn inspired by a correspondence obtained by Avramov et al.~\cite[7]{abimHomPerf}) and we show that $\Ho$ factors through this correspondence.  We use this to obtain further results on the functor $\Ho$ in the case of representations of an elementary abelian group.

\section{The Global Operator of an Infinitesimal Group Scheme} \label{secRev}

In this section we summarize the definitions required to construct the associated sheaves of \cref{secAssSh}.  For a complete review we refer the reader to Suslin et al.~\cite{bfsSupportVarieties} and to Friedlander and Pevtsova~\cite{friedpevConstructions}.  We fix, once and for all, an algebraically closed field $k$ of positive characteristic $p$.

Recall that a group scheme $G$ (over $k$) is \emph{infinitesimal} of height at most $r$ if the coordinate algebra $k[G]$ is a finite dimensional local ring and $x^{p^r} = 0$ for all $x$ contained in the maximal ideal.  To give $G$ it suffices to designate a commutative Hopf algebra as the coordinate ring $k[G]$ or designate a cocommutative Hopf algebra as its linear dual: the group ring $kG = \hom_k(k[G], k)$.  The group ring is significant in particular because a representation of $G$ is equivalent to a $kG$-module.  Our two main examples, \ref{exLie1} and \ref{exGr}, will be height $1$ group schemes whose representation theory is equivalent to the representation theory of a restricted Lie algebra and an elementary abelian group, respectively.

\begin{Ex} \label{exLie1}
Let $\mathfrak g$ be a restricted Lie algebra (always assumed to be finite dimensional) and $\mathcal U_p(\mathfrak g)$ its restricted universal enveloping algebra.  This is a cocommutative Hopf algebra whose primitive elements are exactly the elements of the Lie algebra $\mathfrak g \subseteq \mathcal U_p(\mathfrak g)$.  We define a group scheme $\underline{\mathfrak g}$ by designating the group ring $k\underline{\mathfrak g} = \mathcal U_p(\mathfrak g)$; it is an infinitesimal height $1$ scheme~\cite[I.8.5.b]{jantzen}.  Now $\underline{\mathfrak g}$-modules are equivalent to $\mathcal U_p(\mathfrak g)$-modules, i.e., representations of $\mathfrak g$ as a restricted Lie algebra.
\end{Ex}

\begin{Ex} \label{exFrob}
Let $G$ be an algebraic group and $r \in \mathbb N$.  The \emph{$r^\text{th}$ Frobenius kernel} of $G$ is denoted $G_{(r)}$.  It is the group scheme with coordinate ring
\[k[G_{(r)}] = \frac{k[G]}{\langle f^{p^r} \mid f \in I_1\rangle}\]
where $I_1 \subseteq k[G]$ is the augmentation ideal.  It is infinitesimal of height at most $r$.  Note that if $\mathfrak g = \Lie(G)$ then the schemes $\underline{\mathfrak g}$ and $G_{(1)}$ can be identified; see Jantzen~\cite[I.9]{jantzen} for details.
\end{Ex}

\begin{Ex} \label{exGr2}
The $r^\text{th}$ Frobenius kernel, $\mathbb G_{a(r)}$, of the additive group is an infinitesimal height $r$ group scheme.  The coordinate ring is $k[\mathbb G_{a(r)}] = k[t]/t^{p^r}$ with Hopf structure given by designating $t$ a primitive element.  As a $k$-algebra the group ring of $\mathbb G_{a(r)}$ is
\[k\mathbb G_{a(r)} = \frac{k[u_1, \ldots, u_r]}{u_1^p, \ldots, u_r^p}\]
where $u_i$ is dual to $t^{p^{i-1}}$ in the monomial basis.  This is isomorphic to the group ring of the elementary abelian group $E = (\mathbb Z/p)^r$ but with a different Hopf structure.  Consequently, the equivalence between modules over $\mathbb G_{a(r)}$ and modules over $E$ does not extend to their tensor monoidal structures.
\end{Ex}

\begin{Ex} \label{exGr}
The group scheme $\mathbb G_{a(1)}^r$ is infinitesimal of height $1$.  The coordinate ring is $k[\mathbb G_{a(1)}^r] = k[t_1, \ldots, t_r]/(t_1^p, \ldots, t_r^p)$ with Hopf structure given by designating each $t_i$ a primitive element.  As a $k$-algebra the group ring of $\mathbb G_{a(1)}^r$ is
\[k\mathbb G_{a(1)}^r = \frac{k[u_1, \ldots, u_r]}{u_1^p, \ldots, u_r^p}\]
where $u_i$ is dual to $t_i$ in the monomial basis.  Once again, this group ring is isomorphic as a $k$-algebra to the group ring $k(\mathbb Z/p)^r$ but with a different Hopf structure: Each $u_i$ is a primitive element.  As with the previous example, the equivalence between $\mathbb G_{a(1)}^r$-modules and $(\mathbb Z/p)^r$-modules does not extend to their tensor monoidal structures.
\end{Ex}

\begin{Rmk}
The standard Hopf structure on the group ring of $E = (\mathbb Z/p)^r$ yields a group scheme which is finite but not infinitesimal.  Our methods apply only to infinitesimal schemes so we will use $\mathbb G_{a(1)}^r$ when considering modules for an elementary abelian group of rank $r$.
\end{Rmk}

A special role is played by the $\mathbb G_{a(r)}$ from \cref{exGr2}.  Denote by $G_R$ the base extension of $G$ to a commutative $k$-algebra $R$; this is a group scheme over $R$ with coordinate ring $R[G_R] = R \otimes_k k[G]$ and group ring $RG_R = R \otimes_k kG$.  A \emph{$1$-parameter subgroup} of height $r$ is a homomorphism $\mathbb G_{a(r), R} \to G_R$ of group schemes over some $R$.  The collection of all such homomorphisms defines the functor of points of the \emph{support scheme} $V(G)$ of $G$.

\begin{Thm}[{\cite[1.5, 1.12]{bfs1paramCoh}}] \label{thmVG}
Let $G$ be an infinitesimal group scheme of height $r$.  Then there exists an affine scheme $V(G) = \spec k[V(G)]$ whose functor of points
\[V(G)(-) = \hom_{\kalg}(k[V(G)], -)\]
is naturally isomorphic to the functor
\[R \mapsto \hom_{\kgrp[R]}(\mathbb G_{a(r), R}, G_R)\]
from commutative $k$-algebras to sets.  Moreover, $k[V(G)]$ is a finitely generated connected graded $k$-algebra with homogeneous generators of degree $p^i$ for $0 \leq i < r$.
\end{Thm}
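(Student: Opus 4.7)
The plan is to follow the approach of Suslin, Friedlander, and Bendel by first settling the case $G = \GL_n$ and then deducing the general statement via a closed embedding.  As every infinitesimal group scheme admits a closed embedding into some $\GL_n$, the functor $R \mapsto \hom_{\kgrp[R]}(\mathbb G_{a(r), R}, G_R)$ is a subfunctor of the analogous functor for $\GL_n$, cut out by the requirement that the composite of a putative 1-parameter subgroup with the embedding $G_R \hookrightarrow \GL_{n,R}$ actually factors through $G_R$.  This is a closed condition, expressible by the vanishing of the (functorially pulled back) ideal defining $G$ inside $\GL_n$, so once the representing scheme for $\GL_n$ is in hand, $V(G)$ will automatically be a closed subscheme of $V(\GL_n)$, hence affine and of finite type over $k$.

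For the $\GL_n$ case, I would dualize: a 1-parameter subgroup $\phi: \mathbb G_{a(r), R} \to \GL_{n, R}$ is specified by a single matrix $U(t) \in \GL_n(R[t]/t^{p^r})$ with $U(0) = I$, and the group-scheme compatibility becomes the identity $U(t + t') = U(t) U(t')$ inside $\GL_n(R[t, t']/(t^{p^r}, t'^{p^r}))$.  A direct Hopf-algebraic calculation (essentially the structure theorem for the additive formal group in characteristic $p$) shows that the solutions are exactly
\[
U(t) = \exp(tA_0)\exp(t^p A_1)\cdots\exp(t^{p^{r-1}} A_{r-1}),
\]
where $A_0, \ldots, A_{r-1} \in M_n(R)$ are pairwise commuting matrices with $A_i^p = 0$.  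These are polynomial conditions on the entries of the $A_i$, so $V(\GL_n)$ is the closed subscheme of $\mathbb A_k^{rn^2}$ cut out by pairwise commutativity and $p$-th power nilpotence, which represents the desired functor.

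The natural $\mathbb G_m$-action on $\mathbb G_{a(r)}$ given by $t \mapsto \lambda t$ induces an action on $V(G)$ by precomposition, and inspecting the exponential formula shows that $A_i$ transforms with weight $p^i$.  Hence the matrix entries of $A_i$ generate $k[V(\GL_n)]$ --- and so also its quotient $k[V(G)]$ --- in degree $p^i$; since every generator has positive degree, the degree zero part is forced to be $k$, yielding finite generation and connectedness of the grading with homogeneous generators in degrees $p^i$ for $0 \leq i < r$.  I expect the middle step to be the main obstacle: the exponential parametrization of 1-parameter subgroups of $\GL_n$ is classical over a field, but carrying it out uniformly over an arbitrary commutative $k$-algebra $R$ --- and in particular verifying that pairwise commutativity of the $A_i$ is \emph{forced by} (rather than added to) the multiplicative identity $U(t+t') = U(t)U(t')$ --- is the heart of the argument and is where the theorem acquires its content.
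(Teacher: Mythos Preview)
The paper does not give its own proof of this theorem; it is stated with a citation to Suslin--Friedlander--Bendel~\cite[1.5, 1.12]{bfs1paramCoh} and immediately followed by a remark and examples.  Your sketch is a faithful outline of the argument in that reference: one first handles $\GL_n$ (or rather $\GL_{n(r)}$, since $\GL_n$ itself is not infinitesimal, though maps from $\mathbb G_{a(r)}$ automatically factor through the $r^{\text{th}}$ Frobenius kernel) by parametrizing $1$-parameter subgroups via the exponential formula, and then uses a closed embedding $G \hookrightarrow \GL_n$ to realize $V(G)$ as a closed subscheme.  Your identification of the crux --- that the functional equation $U(t+t') = U(t)U(t')$ over an arbitrary $k$-algebra $R$ forces the $A_i$ to pairwise commute --- is exactly where the work lies in the original paper.

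One small point of comparison: the present paper describes the grading slightly differently, via the right monoid action of $\mathbb A^1 \subset \mathbb A^r = V(\mathbb G_{a(r)})$ by precomposition rather than via a $\mathbb G_m$-action, but this yields the same grading and your weight computation $A_i \mapsto \lambda^{p^i} A_i$ is correct.
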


\begin{Rmk}
For $G$ infinitesimal of height $1$ the coordinate ring $k[V(G)]$ is generated in degree $1$, but for $G$ of larger heights this need not be the case.
\end{Rmk}

\begin{Ex} \label{exVlie}
Let $\mathfrak g$ be a restricted Lie algebra with basis $\set{g_1, \ldots, g_n}$ and dual basis $\set{x_1, \ldots, x_n}$.  Given a commutative $k$-algebra $R$ we extend the $p$-operation to $(-)^{[p]}\colon R \otimes_k \mathfrak g \to R \otimes_k \mathfrak g$ via $a \otimes v \mapsto a^p \otimes v^{[p]}$.  Choose $f_1, \ldots, f_n \in k[x_1, \ldots, x_n]$ such that
\[(x_1 \otimes g_1 + \cdots + x_n \otimes g_n)^{[p]} = f_1 \otimes g_1 + \cdots + f_n \otimes g_n.\]
We define the \emph{restricted nullcone} of $\mathfrak g$ to be $\mathcal N_p(\mathfrak g) = \spec k[x_1, \ldots, x_n]/(f_1, \ldots, f_n)$.  This is the scheme whose functor of points is given by
\[\mathcal N_p(\mathfrak g)(R) = \set{v \in R \otimes_k \mathfrak g \ \middle| \ v^{[p]} = 0}.\]
Note that the $k$-points $\mathcal N_p(\mathfrak g)(k) \subseteq \mathfrak g$ of this scheme give the traditional definition of the restricted nullcone, but the scheme $\mathcal N_p(\mathfrak g)$ need not be reduced.

The group $\underline{\mathfrak g}$ from \cref{exLie1} has support scheme $\mathcal N_p(\mathfrak g)$.  The isomorphism $\mathcal N_p(\mathfrak g)(R) \simeq \hom_{\kgrp[R]}(\mathbb G_{a(1), R}, \underline{\mathfrak g}_R)$ sends a $p$-nilpotent $v \in R \otimes_k \mathfrak g$ to the homomorphism $\mathbb G_{a(1), R} \to \underline{\mathfrak g}_R$ whose induced map on group rings, $R[x]/x^p \to R \otimes_k \mathcal U_p(\mathfrak g)$, is defined by $x \mapsto v$.
\end{Ex}

\begin{Def}
Let $G$ be an algebraic group and $\phi\colon G \to \GL_n$ a closed embedding.  If, for each $p$-nilpotent $x \in \Lie(G)$, the exponential map $t \mapsto \exp(\mathrm{d}\phi(tx))$ takes values in $G$ then we say that $\phi$ is an embedding of \emph{exponential type}.  If $G$ has such an embedding then the group $G$ is of exponential type.
\end{Def}

By inspection the following groups are of exponential type: $\GL_n$, $\SL_n$, $\Sp_{2n}$, $\B_n$ (upper triangular $n \times n$ matrices), $\U_n$ (strictly upper triangular $n \times n$ matrices), and the orthogonal group $O(\phi)$ associated to a non-degenerate bilinear form $\phi$ (see Suslin et al.~\cite[1.8]{bfs1paramCoh}, McNinch~\cite{mcninchExpType}, and Sobaje~\cite{sobajeExpOneParam} for further discussion of the exponential type condition).

\begin{Ex} \label{exVlier}
Let $G$ be of exponential type, $\mathfrak g = \Lie(G)$, and $r \in \mathbb N$.  Define $\mathcal N_p^{[r]}(\mathfrak g)$ to be the closed subscheme of $(\mathcal N_p(\mathfrak g))^r$ consisting of $r$-tuples whose elements pairwise commute; then the support scheme of $G_{(r)}$ is $V(G_{(r)}) = \mathcal N_p^{[r]}(\mathfrak g)$.  Given $(\alpha_1, \ldots, \alpha_r) \in \mathcal N_p^{[r]}(\mathfrak g)$ the coordinate functions for the elements of the matrix $\alpha_i$ have degree $p^{i-1}$.  See Suslin et al.~\cite[1.7]{bfs1paramCoh} and Friedlander and Pevtsova~\cite[2.10]{friedpevConstructions} for more details.
\end{Ex}

\begin{Ex} \label{exVE}
The support scheme of the group scheme $\mathbb G_{a(1)}^r$ from \cref{exGr} is $\mathbb A^r = \spec k[x_1, \ldots, x_r]$.  The isomorphism $\mathbb A^r(R) \simeq \hom_{\kgrp[R]}(\mathbb G_{a(1), R}, \mathbb G_{a(1), R}^r)$ sends $(a_1, \ldots, a_r) \in R^r$ to the homomorphism $\mathbb G_{a(1), R} \to \mathbb G_{a(1), R}^r$ whose induced map on coordinate rings, $R[t]/t^p \to R[t_1, \ldots, t_r]/(t_1^p, \ldots, t_r^p)$, is defined by $t \mapsto \sum_ia_it_i$.
\end{Ex}

\begin{Ex} \label{exVGr}
The support scheme of the group scheme $\mathbb G_{a(r)}$ from \cref{exGr2} is $\mathbb A^r = \spec k[x_1, \ldots, x_r]$.  The isomorphism $\mathbb A^r(R) \simeq \hom_{\kgrp[R]}(\mathbb G_{a(r), R}, \mathbb G_{a(r), R})$ sends an element $(a_1, \ldots, a_r) \in R^r$ to the homomorphism $\mathbb G_{a(r), R} \to \mathbb G_{a(r), R}$ whose induced map on coordinate rings, $R[t]/t^{p^r} \to R[t]/t^{p^r}$, is defined by $t \mapsto \sum_ia_it^{p^{i-1}}$.
\end{Ex}

The grading of $k[V(G)]$ is given as follows.  From \cref{exVGr} we have $V(\mathbb G_{a(r)}) = \mathbb A^r$ so we get a right monoid action $V(G) \times \mathbb A^r \to V(G)$ via composition of $1$-parameter subgroups.  Restrict to an action $V(G) \times \mathbb A^1 \to V(G)$ by including $\mathbb A^1 \subset \mathbb A^r$ as the first factor (these are the $1$-parameter subgroups of $\mathbb G_{a(r)}$ whose maps on coordinate rings $k[t]/t^{p^r} \to k[t]/t^{p^r}$ are degree preserving).  If $\phi\colon k[V(G)] \to k[V(G)] \otimes_k k[t]$ is the associated comorphism then the algebra $k[V(G)]$ is graded by $k[V(G)]_n = \phi^{-1}(k[V(G)] \otimes_k t^n)$~\cite[1.11]{bfs1paramCoh}.

\begin{Ex} \label{exVGrt}
To determine the grading on $k[V(\mathbb G_{a(r)})] = k[x_1, \ldots, x_r]$ from \cref{exVGr} note that the monoid action $\mathbb A^r \times \mathbb A^1 \to \mathbb A^r$ is given by
\[(a_1, a_2, \ldots, a_r)\cdot b = (a_1b, a_2b^p, \ldots, a_rb^{p^{r-1}}).\]
Its comorphism $k[x_1, \ldots, x_r] \to k[x_1, \ldots, x_r] \otimes_k k[t]$ is defined by $x_i \mapsto x_i \otimes t^{p^{i-1}}$ therefore $\deg x_i = p^{i-1}$.
\end{Ex}

\begin{Def}
Let $\PG = \proj k[V(G)]/\mathfrak N$ where $\mathfrak N$ is the nilradical of $k[V(G)]$.
\end{Def}

\begin{Rmk}
Reducing $V(G)$ before taking $\proj$ isn't strictly necessary.  Most of the results that follow work over $\proj k[V(G)]$ as well, but for some directions in \cref{thm413} and \cref{thmHoProj} we must assume that the open subscheme $U \subseteq \PG$ is reduced.  As $V(G)$ is not, in general, reduced there are more applications of the theory if we reduce $V(G)$ here.
\end{Rmk}

The scheme $\PG$ is the geometric space over which we will construct sheaves in \cref{secAssSh}.  We will construct these sheaves from kernels, images, and cokernels of certain global operators, $\Theta_M$, which we now describe.

The generic element of the functor of points of $V(G)$ is the homomorphism of group schemes
\[\mathcal U_G\colon\mathbb G_{a(r), k[V(G)]} \to G_{k[V(G)]}\]
in $V(G)(k[V(G)]) \simeq \hom_{\kalg}(k[V(G)], k[V(G)])$ corresponding to the identity map.  This induces a Hopf $k[V(G)]$-algebra homomorphism between the corresponding group rings
\[\mathcal U_{G, \ast}\colon k[V(G)] \otimes_k \frac{k[u_1, \ldots, u_r]}{(u_1^p, \ldots, u_r^p)} \to k[V(G)] \otimes_k kG.\]
Given a $kG$-module $M$, its extension of scalars $k[V(G)] \otimes_k M$ is a module over $k[V(G)] \otimes_k kG$ and multiplication by $\Theta_G = \mathcal U_{G, \ast}(1 \otimes u_r)$ gives a homomorphism
\[\Theta_M\colon k[V(G)] \otimes_k M \to k[V(G)] \otimes_k M\]
of $k[V(G)]$-modules.  This map is homogeneous of degree $p^{r-1}$~\cite[2.11]{friedpevConstructions}; hence, after reducing, it induces a map of sheaves over $\PG$ which, by some abuse of notation, we will also denote $\Theta_M$.

\begin{Def}
Given a $G$-module $M$ we define $\widetilde M = \OPG \otimes_k M$.  The \emph{global operator} corresponding to $M$ is the sheaf map
\[\Theta_M\colon \widetilde M \to \widetilde M(p^{r-1})\]
induced by the action of $\Theta_G = \mathcal U_{G, \ast}(1 \otimes u_r)$.
\end{Def}

\begin{Rmk}
We caution the reader that $\OPG(1)$ need not be locally free when the ring $k[V(G)]$ is not generated in degree $1$.  Instead one has that for any integer $d \in \mathbb Z$ the sheaf $\OPG(dp^{r-1})$ is locally free of rank $1$.  This follows from the fact that the generators of $k[V(G)]$ have order dividing $p^{r-1}$ (c.f.\ \cref{thmVG} and Friedlander and Pevtsova~\cite[4.5]{friedpevConstructions}).  Consequently, $\widetilde M(dp^{r-1})$ is locally free of rank $\dim M$ and its specialization at a point $v \in \PG$ is $k(v) \otimes_{\OPG[,v]} \widetilde M(dp^{r-1}) \simeq k(v) \otimes_k M$.
\end{Rmk}

Observe that $M \mapsto \widetilde M$ gives an exact functor from the category, $\catd{G}{mod}$, of finitely generated $G$-modules to the category, $\cohPG$, of coherent sheaves over $\PG$.  Global operators are natural with respect to this functor, i.e., given a module homomorphism $\phi\colon M \to N$ the diagram
\[\xymatrix{\widetilde M \ar[r]^-{\Theta_M} \ar[d]_-{\id \otimes \phi} & \widetilde M(p^{r-1}) \ar[d]^-{\id \otimes \phi} \\ \widetilde N \ar[r]_-{\Theta_N} & \widetilde N(p^{r-1})}\]
commutes.

\begin{Ex}
When $G = \underline{\mathfrak g}$ we have $\PG = \proj k[x_1, \ldots, x_n]/\sqrt{f_1, \ldots, f_n}$ from \cref{exVlie}.  This is the reduced scheme corresponding to the projective variety given by the traditional restricted nullcone $\mathcal N_p(\mathfrak g)(k) \subseteq \mathfrak g$.  The global operator $\Theta_M$ is given by the action of $\Theta_{\underline{\mathfrak g}} = x_1 \otimes g_1 + \cdots + x_n \otimes g_n$.
\end{Ex}

\begin{Ex}
When $G = \mathbb G_{a(r)}$ \cref{exVGrt} gives that $\PG$ is the weighted projective space $\PG[1, p, \ldots, p^{r-1}]$, i.e., we have $\PG = \proj k[x_1, \ldots, x_r]$ but with $\deg x_i = p^{i-1}$.  Given a module $M$ the global operator $\Theta_M$ is given by the action of $\Theta_{\mathbb G_{a(r)}} = x_1^{p^{r-1}} \otimes u_1 + x_2^{p^{r-2}} \otimes u_2 + \cdots + x_r \otimes u_r$.  Note that the grading $\deg x_i = p^{i-1}$ implies $\deg x_i^{p^{r-i}} = p^{r-1}$ as required.
\end{Ex}

\begin{Ex}
When $G = \mathbb G_{a(1)}^r$ we have $\PG = \mathbb P^{r-1}$ from \cref{exVE}.  Given a module $M$ the map $\Theta_M$ is given by the action of $\Theta_{\mathbb G^r_{a(1)}} = x_1 \otimes u_1 + \cdots + x_n \otimes u_n$.  Note that $\Theta_{\mathbb G_{a(1)}^r}$ is the operator studied by Benson and Pevtsova~\cite{benPevtVectorBundles}.
\end{Ex}

\section{Local Jordan Type} \label{secJType}

We now define the local Jordan type of a module by using the global operator $\Theta_M$ to associate a $p$-restricted partition to each point $v \in \PG$ in the underlying topological space of $\PG$, i.e., to each homogeneous prime ideal in $k[V(G)]$.

\begin{Def}
A \emph{$p$-restricted partition} is a finite sequence of non-negative integers $\lambda = (\lambda_1, \lambda_2, \ldots, \lambda_n)$ satisfying $p \geq \lambda_1 \geq \lambda_2 \geq \cdots \geq \lambda_n$.  We call the $\lambda_i$ the \emph{parts} of $\lambda$ and will also write such partitions in exponential notation where exponents denote repeated parts, for example $(5, 2, 2, 2, 1) = [5][2]^3[1]$.
\end{Def}

Consider a finite dimensional vector space $V$.  Given a linear operator $T\colon V \to V$ satisfying $T^p = 0$ we get a $k[t]/t^p$-module structure on $V$ by letting $t$ act via $T$.  Each indecomposable $k[t]/t^p$-module is isomorphic to one of $k[t]/t^i$ where $1 \leq i \leq p$.  If
\[V \simeq \bigoplus_{1 \leq i \leq p}\left(k[t]/t^i\right)^{a_i}\]
then we say the \emph{Jordan type} of the operator $T$ is $\jtype(T) = [p]^{a_p}[p-1]^{a_{p-1}}\cdots[1]^{a_1}$.  Note that $a_i$ is the number of blocks of size $i$ in the Jordan normal form of any matrix representation of $T$.

\begin{Def}
Let $M$ be a $G$-module.  The \emph{local operator} at a point $v \in \PG$ is the linear operator $\theta_{v, M} = k(v) \otimes_{\OPG[,v]} \Theta_M$ on $k(v) \otimes_k M$.  Let $\mathscr P_p$ be the set of all $p$-restricted partitions.  The \emph{local Jordan type} of $M$ is the function
\[\jtype(-, M)\colon\PG \to \mathscr P_p\]
defined by $\jtype(v, M) = \jtype(\theta_{v, M})$.
\end{Def}

\begin{Rmk}
This definition is slightly different from the one given by Friedlander and Pevtsova~\cite{friedpevConstructions}.  They define the local operator as the specialization of $\Theta_M$ when considered as a map of sheaves over $V(G)$ and thus associate to each point in $V(G)$ a partition.  One can check that the function $V(G) \to \mathscr P_p$ defined in this way depends only on the homogeneous primes in $V(G)$ and at these homogeneous primes agrees with the Jordan type $\PG \to \mathscr P_p$ as we have defined it above.  See Stark~\cite[Section 3.2]{starkGeneral} for a detailed proof of this in the case of a height $1$ scheme.  The proof goes through in the general case with only minor modifications.  Also see Friedlander and Pevtsova~\cite[Section 4]{friedpevConstructions} for the connection to the $\pi$-points definition of Jordan type used by Carlson et al.~\cite{cfpConstJType}, Friedlander et al.~\cite{fpsGenMaxJType}, and Friedlander and Pevtsova~\cite{friedpevPiSupp}.
\end{Rmk}

The Young diagram of a partition will be a useful visualization tool.  A \emph{Young diagram} is a two dimensional array of finitely many boxes whose row lengths are weakly decreasing.  Young diagrams correspond to partitions by reading row lengths from top to bottom and henceforth we will identify these two objects.  We also remind the reader that the \emph{conjugate} of a partition is the partition obtained by transposing the Young diagram.

\begin{Ex}
The partitions $[3][2]^2$, $[3][2][1]$, and $[3]^2[1]$ are given by
\[\ydiagram{3,2,2}, \qquad\qquad \ydiagram{3,2,1}, \qquad\text{and}\qquad \ydiagram{3,3,1}\]
respectively and $[3][2]^2$ and $[3]^2[1]$ are conjugate.
\end{Ex}

The $k[t]/t^p$-module $k[t]/t^i$ corresponds to the partition $[i]$ given by $1$ row of length $i$.
\[\ydiagram{2}\cdots\ydiagram{2}\]
These $i$ boxes correspond to the ordered basis $\set{t^{i-1}, t^{i-2}, \ldots, t, 1}$ of $k[t]/t^i$ so we can visualize the action of $t$ as moving each box one step to the left and annihilating the leftmost box.  More generally, given a linear operator $T\colon V \to V$ as above, the boxes in the Young diagram of $\jtype(T)$ correspond to basis elements in the basis of $V$ with respect to which $T$ is in Jordan normal form.  The action of $T$ moves each box one step to the left and the boxes in the leftmost column of the Young diagram correspond to the basis elements which span the kernel of $T$.

\begin{Def}
Let $j \in \mathbb N$.  The \emph{local $j$-rank} of $M$ is the function
\[\rank^j(-, M)\colon\PG \to \mathbb N_0\]
defined by $\rank^j(v, M) = \rank\theta^j_{v, M}$.
\end{Def}

It is useful to know that we can reconstruct $\jtype(v, M)$ if we know $\rank^j(v, M)$ for all $j$.  Note, for our linear operator $T$, that $\rank T^j$ is exactly the number of boxes that do not lie in the first $j$ columns of $\jtype(T)$.

\begin{Lem} \label{lemJrank}
Let $T\colon V \to V$ be a $p$-nilpotent linear operator.  Then $\jtype(T)$ is conjugate to the partition
\[(\rank T^0 - \rank T^1, \rank T^1 - \rank T^2, \ldots, \rank T^{p-1} - \rank T^p)\]
\end{Lem}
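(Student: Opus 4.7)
The plan is to reduce to the cyclic case and then count boxes of the Young diagram column by column, using the remark that immediately precedes the lemma: for a single Jordan block, $\rank T^j$ counts the basis boxes lying outside the first $j$ columns of its Young diagram.

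First I would decompose $V$ as a direct sum of indecomposable $k[t]/t^p$-modules, $V \simeq \bigoplus_{i=1}^p(k[t]/t^i)^{a_i}$, so that $\jtype(T) = [p]^{a_p}\cdots[1]^{a_1}$. Since $T^j$ acts diagonally across this decomposition, $\rank T^j = \sum_{i=1}^p a_i\cdot\rank(t^j\mid k[t]/t^i) = \sum_{i=1}^p a_i\cdot\max(i-j,0)$. The quantity $\max(i-j,0)$ is exactly the number of boxes in the single-row Young diagram $[i]$ that do not lie in its first $j$ columns, so $\rank T^j$ equals the total number of boxes in $\jtype(T)$ lying strictly to the right of the $j$-th column.

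From this, $\rank T^{j-1} - \rank T^j$ counts exactly the number of boxes in the $j$-th column of $\jtype(T)$, which is the length of the $j$-th row of the conjugate partition. Hence the tuple $(\rank T^0 - \rank T^1, \ldots, \rank T^{p-1} - \rank T^p)$ lists, in order, the row lengths of the conjugate of $\jtype(T)$; the fact that these are weakly decreasing and bounded by $p$ follows because $\jtype(T)$ has at most $p$ columns and at most $\dim V$ rows (and $T^p = 0$ ensures the truncation at $p$ columns is correct).

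There is no real obstacle here: the content is the combinatorial identification between $\rank T^j$ and box counts in the Young diagram, which is transparent once $V$ is decomposed into Jordan blocks. The only thing to be mildly careful about is bookkeeping the conjugation, i.e., making sure that "column lengths of $\jtype(T)$" and "row lengths of the conjugate partition" are being matched up in the right order, but this is immediate from the definition of conjugate Young diagram.
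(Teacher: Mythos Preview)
Your proof is correct and follows essentially the same approach as the paper: the paper's proof is a one-line appeal to the observation (stated just before the lemma) that $\rank T^{j-1} - \rank T^j$ counts the boxes in the $j^{\text{th}}$ column of $\jtype(T)$, together with the definition of the conjugate partition. You have simply unpacked that observation explicitly via the Jordan block decomposition.
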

\begin{proof}
Follows immediately from the definition of a conjugate partition and the observation that $\rank T^{j-1} - \rank T^j$ is the number of boxes in the $j^\text{th}$ column of $\jtype(T)$.
\end{proof}

Thus the local $j$-ranks of a module $M$ encode the local Jordan type.  The local Jordan type in turn encodes the data of whether or not $M$ is projective.

\begin{Thm}[{\cite[7.6]{bfsSupportVarieties}}] \label{thmProj}
A module $M$ is projective if and only if its local Jordan type is the constant function $v \mapsto [p]^\frac{\dim M}{p}$.
\end{Thm}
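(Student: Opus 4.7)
The plan is to split the argument into the two implications: the forward direction is a direct structural verification using that $kG$ is a local self-injective algebra, while the reverse direction is the substance of BFS Theorem~7.6 and relies on identifying the set $\PG_M := \set{v \in \PG : \jtype(v, M) \neq [p]^{\dim M / p}}$ with (the projectivization of) the cohomological support variety of $M$.

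For the forward direction, since $kG$ is a finite-dimensional local self-injective (Frobenius) algebra, every projective $kG$-module is free. Writing $M \simeq (kG)^n$ and using that $M \mapsto \widetilde M$ and $M \mapsto \Theta_M$ are additive and natural, one reduces to the case $M = kG$. Fixing $v \in \PG$ with residue field $K$, the operator $\theta_{v, kG}$ is left multiplication on $KG$ by $\eta := \alpha_\ast(u_r) \in KG$, where $\alpha : \mathbb G_{a(r), K} \to G_K$ is the $1$-parameter subgroup obtained by specializing the universal one at $v$. I would factor $\alpha$ as a faithfully flat surjection $f : \mathbb G_{a(r), K} \twoheadrightarrow H$ followed by a closed embedding $i : H \hookrightarrow G_K$, so that $\eta \in KH \subseteq KG$. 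Since $H$ is a Hopf subalgebra of $G_K$, the Nichols--Zoeller theorem gives that $KG$ is free as a $KH$-module, so the Jordan type of $\eta$ on $KG$ is a direct sum of copies of its Jordan type on $KH$. The classification of quotients of $\mathbb G_{a(r)}$ identifies $H$ with some $\mathbb G_{a(s), K}$, and the explicit description of its group ring reduces the verification that $\eta$ acts on $KH$ with Jordan type $[p]^{\dim KH / p}$ to a direct computation.

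For the reverse direction, the goal is to conclude projectivity from $\PG_M = \emptyset$. The strategy is a detection theorem: if $\PG_M$ is empty, then for every field extension $K/k$ and every $\nu : \mathbb G_{a(r), K} \to G_K$, the pullback $\nu^\ast (M_K)$ is free over $K \mathbb G_{a(r)}$; combined with the classical fact that projectivity over $kG$ is detected by the cohomological support variety (which in turn is detected by $1$-parameter subgroups), this yields the theorem. The main obstacle is the passage from pointwise Jordan type data (the action of a single element $\alpha_\ast(u_r)$) to the stronger freeness statement over all of $K \mathbb G_{a(r)}$; BFS accomplishes this by combining a variant of Dade's lemma for infinitesimal group schemes with upper semicontinuity of the rank functions $\rank^j(-, M)$, propagating information from the generic point of each irreducible component of $V(G)$ to every $K$-point at which projectivity must be tested.
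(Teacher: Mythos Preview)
The paper does not give a proof of this statement: it is quoted verbatim from Suslin--Friedlander--Bendel with the citation \cite[7.6]{bfsSupportVarieties} and no argument follows. There is therefore nothing in the paper itself to compare your proposal against; the result is used as a black box.

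Your outline is a reasonable summary of the strategy behind the BFS result. The forward direction via freeness of $kG$ over the sub-Hopf-algebra coming from a $1$-parameter subgroup (Nichols--Zoeller) is the standard route. Two points would need tightening if you wanted a self-contained argument rather than a sketch: first, the passage from a point $v \in \PG$ to an actual $1$-parameter subgroup $\alpha$ over $k(v)$ is not entirely formal (this is exactly the translation alluded to in the remark following the definition of local Jordan type, relating the $\PG$-based Jordan type to the $V(G)$- and $\pi$-point-based versions); second, the assertion that the image $H$ of $\alpha$ is isomorphic to some $\mathbb G_{a(s),K}$ is not automatic for arbitrary quotients of $\mathbb G_{a(r)}$ over a possibly imperfect residue field, though what you actually need---that $KH$ is free over $K[\eta]/\eta^p$---can be arranged more directly. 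For the reverse direction you correctly identify that the substance lies entirely in the BFS detection theorem, and your description of its ingredients (semicontinuity plus a Dade-type lemma propagating from generic points) matches the shape of their argument.
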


We say that $M$ is \emph{projective at $v \in \PG$} if the local Jordan type at $v$ is $[p]^\frac{\dim M}{p}$, or more generally $M$ is projective on $U \subseteq \PG$ if it is projective at every point of $U$.  We say that $M$ has \emph{constant Jordan type} on a subset $U \subseteq \PG$ if the restriction of its local Jordan type to $U$ is a constant function, and \emph{constant $j$-rank} on $U$ if the restriction of its local $j$-rank to $U$ is a constant function.  We note that the previous theorem can be interpreted as a statement about the support of the module $M$.

\begin{Def}
The \emph{support}, $\PG_M$, of a module $M$ is the set of points $v \in \PG$ at which $M$ is not projective, i.e., at which $\jtype(v, M) \neq [p]^\frac{\dim M}{p}$.
\end{Def}

The support of $M$ is a closed subset of $\PG$~\cite[6.1]{bfsSupportVarieties}.  \Cref{thmProj} says that a module $M$ is projective (as a $G$-module) if and only if $\PG_M$ is the empty set or equivalently if and only if $M$ is projective on $\PG$ (i.e.\ projective at every point of $\PG$).

\section{The Associated Sheaves of a Module} \label{secAssSh}

Assume $M$ is a finite dimensional $G$-module.  In this section we consider the \emph{associated sheaves} of $M$, i.e., the kernel, image, and cokernel of powers of the global operator $\Theta_M\colon\widetilde M \to \widetilde M(p^{r - 1})$.  For $1 \leq i \leq p$ we also construct $\F_i(M)$ from these associated sheaves.  In addition to being independently motivated, the $\F_i(M)$ are important because they are isomorphic, up to a shift, to the quotients of filtrations of $\widetilde M$, cokernels of the global operator, and the sheaves $\Ho(M)$ and $\Ho[p-1](M)$ of \cref{secHo}.

In order to compose $\Theta_M$ with itself we must shift the degree of successive copies; hence we need a convention for which degrees we start and end at.  Given $j \in \mathbb N$ we define
\begin{align*}
\gker[j]{M} &= \ker\left[\Theta_M((j-1)p^{r-1})\circ\cdots\circ\Theta_M(p^{r-1})\circ\Theta_M\right], \\
\gim[j]{M} &= \im\left[\Theta_M(-p^{r-1})\circ\cdots\circ\Theta_M((1-j)p^{r-1})\circ\Theta_M(-jp^{r-1})\right], \\
\gcoker[j]{M} &= \coker\left[\Theta_M(-p^{r-1})\circ\cdots\circ\Theta_M((1-j)p^{r-1})\circ\Theta_M(-jp^{r-1})\right].
\end{align*}
Note that $\gker[j]{M}$ and $\gim[j]{M}$ are subsheafs of $\widetilde M$, and $\gcoker[j]{M}$ is a quotient of $\widetilde M$.  Also note that the canonical short exact sequence below now has a shift
\[0 \to \gker[j]{M} \to \widetilde M \to \gim[j]{M}(jp^{r-1}) \to 0.\]

Properties of the local Jordan type of $M$, or more specifically the $j$-rank of $M$, are related to the property that these sheaves are locally free.  We will detect this using the following lemma which is Exercise II.5.8 in Hartshorne~\cite{hartshorne} and a proof can be found in Friedlander and Pevtsova~\cite[4.11]{friedpevConstructions}.  We note that by \emph{specialization} of a sheaf at a point $x \in \PG$ we mean the functor $\mathscr F \mapsto k(x) \otimes_{\OPG[,x]} \mathscr F_x$ where one first takes the stalk at $x$ and then factors out the maximal ideal.  To ease the notation from here on out we will simply write $k(x) \otimes \mathscr F$ for the specialization of $\mathscr F$ at $x$.

\begin{Lem} \label{lemLocFree}
Let $X$ be a reduced Noetherian scheme and $\mathscr F$ a coherent sheaf over $X$.  Then the function
\[\phi(x) = \dim_{k(x)} k(x) \otimes \mathscr F\]
is upper semi-continuous, i.e., for any $n \in \mathbb Z$ the set $\set{x \in X \mid \phi(x) \geq n}$ is closed.  If $X$ is connected then $\mathscr F$ is locally free if and only if $\phi$ is constant.
\end{Lem}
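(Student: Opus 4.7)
The plan is to work locally and reduce everything to a statement about finitely generated modules over a Noetherian reduced ring. Both assertions are local on $X$, so fix a point and pass to an affine open neighborhood $\spec A \subseteq X$ on which $\mathscr F$ corresponds to a finitely generated $A$-module $M$. Since $A$ is Noetherian, $M$ admits a finite presentation $A^m \xrightarrow{T} A^n \to M \to 0$. Tensoring with $k(x)$ is right exact, so
\[\phi(x) = n - \rank_{k(x)} T(x),\]
where $T(x)$ denotes the reduction of the matrix $T$ modulo $\mathfrak m_x$. The locus where $T(x)$ has rank at most $r$ is cut out by the vanishing of all $(r+1)\times(r+1)$ minors of $T$, hence is closed in $\spec A$. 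Thus $\rank T(-)$ is lower semi-continuous and $\phi$ is upper semi-continuous.

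For the locally free criterion, the forward direction is immediate: if $\mathscr F$ is locally free then its rank function is locally constant on $X$, and a locally constant function on a connected space is constant. For the converse, suppose $\phi \equiv n$. At any point $x$, choose $m_1, \ldots, m_n \in M_x$ lifting a $k(x)$-basis of $M \otimes k(x)$. By Nakayama's lemma these elements generate $M_x$; after shrinking to an affine open neighborhood they extend to a surjection $A^n \twoheadrightarrow M$ with finitely generated kernel $K$. For any point $y$ in this open set, right exactness of $-\otimes k(y)$ gives
\[K \otimes k(y) \to k(y)^n \to M \otimes k(y) \to 0,\]
and constancy of $\phi$ forces the surjection $k(y)^n \twoheadrightarrow M \otimes k(y)$ to be an isomorphism of $n$-dimensional $k(y)$-vector spaces. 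Consequently the composite $K \otimes k(y) \to k(y)^n$ is zero for every point $y$ of the neighborhood.

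The only remaining difficulty, and the single place where the reduced hypothesis is needed, is to deduce $K = 0$ from this vanishing. Pick finitely many generators $k_1, \ldots, k_s$ of $K$ and write each as a tuple $k_j = (a_{j1}, \ldots, a_{jn})$ with $a_{ji} \in A$. The map $K \otimes k(y) \to k(y)^n$ sends the class of $k_j$ to the tuple of residues of the $a_{ji}$ in $k(y)$, so the vanishing for every $y \in \spec A$ is precisely the statement that every $a_{ji}$ lies in every prime ideal of $A$, i.e.\ in the nilradical. Since $X$ is reduced, so is $A$, hence the nilradical is zero and every $a_{ji}$ vanishes. Thus $K = 0$ and $M \cong A^n$ on the chosen neighborhood, proving that $\mathscr F$ is locally free.
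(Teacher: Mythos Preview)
Your proof is correct and is the standard argument for this fact (Hartshorne, Exercise~II.5.8). The paper does not supply its own proof of this lemma; it simply cites the Hartshorne exercise and refers to Friedlander and Pevtsova~\cite[4.11]{friedpevConstructions} for a written-out proof, so there is nothing further to compare.
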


The following theorem is a corrected version of a theorem by Friedlander and Pevtsova~\cite[4.13]{friedpevConstructions} which incorrectly equates constant rank with the image being locally free, as opposed to the cokernel.

\begin{Thm} \label{thm413}
Let $U \subseteq \mathbb P(G)$ be a connected open subscheme and $j$ a positive integer.  Given the following statements:
\begin{enumerate}
\item $M$ has constant $j$-rank on $U$, \label{thm413icjrk}
\item $M$ has constant $j$-rank on the closed points of $U$, \label{thm413icjrkcl}
\item $\gker[j]{M}|_U$ is locally free, \label{thm413ikerlf}
\item $\gim[j]{M}|_U$ is locally free, \label{thm413iimlf}
\item $\gcoker[j]{M}|_U$ is locally free, \label{thm413icokerlf}
\item $\ker\theta_{v, M}^j \simeq k(v) \otimes \gker[j]{M}$ for all $v \in U$ and $\gker[j]{M}|_U$ is locally free, \label{thm413ikercom}
\item $\im\theta_{v, M}^j \simeq k(v) \otimes \gim[j]{M}$ for all $v \in U$, \label{thm413iimloccom}
\item $\coker\theta_{v, M}^j \simeq k(v) \otimes \gcoker[j]{M}$ for all $v \in U$, \label{thm413icokerloccom}
\end{enumerate}
we have that \eqref*{thm413icokerloccom} always holds, \eqref*{thm413icjrk}, \eqref*{thm413icjrkcl}, \eqref*{thm413icokerlf}, \eqref*{thm413ikercom}, and \eqref*{thm413iimloccom} are all equivalent and imply \eqref*{thm413iimlf}, and \eqref*{thm413iimlf} implies \eqref*{thm413ikerlf}.
\end{Thm}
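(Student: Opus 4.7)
The proof rests on two short exact sequences that arise directly from the definitions:
\begin{equation*}
0 \to \gker[j]{M} \to \widetilde M \to \gim[j]{M}(jp^{r-1}) \to 0, \qquad 0 \to \gim[j]{M} \to \widetilde M \to \gcoker[j]{M} \to 0,
\end{equation*}
together with \Cref{lemLocFree}. I will use that $\widetilde M = \OPG \otimes_k M$ is locally free of rank $\dim M$ with specialization $k(v)\otimes_k M$ at every point, and that $U$ is automatically reduced as an open subscheme of the reduced $\PG$, so \Cref{lemLocFree} applies.

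First I would establish \eqref*{thm413icokerloccom}: the functor $k(v)\otimes_{\OPG[,v]}(-)$ is right exact, so applying it to $\widetilde M(-jp^{r-1}) \to \widetilde M \to \gcoker[j]{M} \to 0$ gives $\coker\theta^j_{v,M} \simeq k(v)\otimes\gcoker[j]{M}$. Combined with $\rank^j(v,M) = \dim M - \dim\coker\theta^j_{v,M}$, this immediately makes $\rank^j(\cdot,M)$ lower semicontinuous on $U$ and yields \eqref*{thm413icjrk} $\Leftrightarrow$ \eqref*{thm413icokerlf} via \Cref{lemLocFree}. Successively pulling back kernels through the two displayed short exact sequences on $U$, using that the kernel of a surjection between locally free sheaves is locally free, gives \eqref*{thm413icokerlf} $\Rightarrow$ \eqref*{thm413iimlf} $\Rightarrow$ \eqref*{thm413ikerlf}.

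Next I would prove the pointwise identifications. For \eqref*{thm413icokerlf} $\Rightarrow$ \eqref*{thm413iimloccom}, flatness of $\gcoker[j]{M}$ at $v$ preserves left exactness of the second SES under $k(v)\otimes(-)$, identifying $k(v)\otimes\gim[j]{M}$ with $\ker\bigl(k(v)\otimes\widetilde M \to k(v)\otimes\gcoker[j]{M}\bigr) = \im\theta^j_{v,M}$ by \eqref*{thm413icokerloccom}. Conversely, \eqref*{thm413iimloccom} forces $\dim k(v)\otimes\gim[j]{M} = \rank^j(v,M)$; the left-hand side is upper semicontinuous by \Cref{lemLocFree} while the right-hand side is lower semicontinuous from the previous step, so the common value is locally constant, and hence constant on connected $U$, giving \eqref*{thm413icjrk}. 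The equivalence \eqref*{thm413icjrk} $\Leftrightarrow$ \eqref*{thm413ikercom} is analogous using the first SES: once \eqref*{thm413icjrk} is in hand, all three terms of that SES are locally free on $U$, so tensoring with $k(v)$ preserves exactness and identifies $\ker\theta^j_{v,M}$ with $k(v)\otimes\gker[j]{M}$; the converse direction uses $\dim\ker\theta^j_{v,M} = \dim M - \rank^j(v,M)$ together with local freeness of $\gker[j]{M}|_U$.

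Finally, \eqref*{thm413icjrk} $\Leftrightarrow$ \eqref*{thm413icjrkcl} is a Jacobson-property argument. Since $\PG$ is of finite type over $k$, the open subscheme $U$ is Jacobson and its closed points are dense. If $\rank^j(\cdot,M)$ equals a constant $c$ on the closed points of $U$, then the lower-semicontinuous locus $\{v\in U : \rank^j(v,M) \leq c-1\}$ is closed and contains no closed point, hence is empty; meanwhile $\{v\in U : \rank^j(v,M) \geq c+1\}$ is open and contains no closed point, hence is also empty by density. The main obstacle I anticipate is organizational: several equivalences require first promoting information about one associated sheaf to local freeness of another via the two SES's, and the pointwise identifications in \eqref*{thm413ikercom} and \eqref*{thm413iimloccom} must be carefully distinguished from the global statements \eqref*{thm413ikerlf} and \eqref*{thm413iimlf}. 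Having \eqref*{thm413icokerloccom} available from the outset is what lets all the pieces interlock.
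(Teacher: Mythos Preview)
Your proposal is correct and follows essentially the same route as the paper: establish \eqref*{thm413icokerloccom} by right exactness of specialization, deduce lower semicontinuity of $\rank^j(\cdot,M)$ and the equivalence \eqref*{thm413icjrk}$\Leftrightarrow$\eqref*{thm413icokerlf} from \Cref{lemLocFree}, get \eqref*{thm413icokerlf}$\Rightarrow$\eqref*{thm413iimlf}$\Rightarrow$\eqref*{thm413ikerlf} from the two canonical short exact sequences, and obtain the pointwise identifications \eqref*{thm413ikercom}, \eqref*{thm413iimloccom} by specializing once everything is locally free. Your Jacobson-style argument for \eqref*{thm413icjrk}$\Leftrightarrow$\eqref*{thm413icjrkcl} is in fact a bit more careful than the paper's one-line appeal to density, but the substance is the same.
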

\begin{proof}
First note that specialization is a right exact functor and therefore commutes with taking cokernels, hence \eqref*{thm413icokerloccom} holds.  We also get from \cref{lemLocFree} that
\begin{equation}
\dim_{k(v)}\coker\theta^j_{v, M} = \dim M - \rank^j(v, M) \tag{$\ast$} \label{eqnCoker}
\end{equation}
is upper semi-continuous as a function of $v$; hence $\rank^j(-, M)$ is lower semi-continuous.  Now Hilberts Nullstellensatz says that the closed points contained in $U$ are dense and a continuous function that is constant on a dense set is constant.  Thus $\eqref*{thm413icjrk} \Leftrightarrow \eqref*{thm413icjrkcl}$.

Next recall that in any short exact sequence of sheaves of modules, if the middle and right sheaves are locally free then so is the left.  The canonical short exact sequences then give $\eqref*{thm413icokerlf} \Rightarrow \eqref*{thm413iimlf} \Rightarrow \eqref*{thm413ikerlf}$.  By \cref{lemLocFree}, \eqref*{thm413icokerlf} is equivalent to the statement that \eqref*{eqnCoker} is independent of the choice of $v \in U$, hence is equivalent to \eqref*{thm413icjrk}.

Assume \eqref*{thm413icokerlf} holds; then \eqref*{thm413ikerlf} and \eqref*{thm413iimlf} hold.  Specialization is exact when restricted to short exact sequences of locally free sheaves.  Specializing
\newcommand{\subU}[1]{\left.#1\right|_U}
\[\xymatrix@C=10pt{\subU{\gker[j]{M}} \ar@{^(->}[rr] && \subU{\widetilde M} \ar[rr]^-{\subU{\Theta_M^j}} \ar@{->>}[rd] && \subU{\widetilde M(jp^{r-1})} \ar@{->>}[rr] && \subU{\gcoker[j]{M}(jp^{r-1})} \\ &&& \makebox[3em][c]{\hss$\subU{\gim[j]{M}(jp^{r-1})}$\hss} \ar@{^(->}[ru]}\]
at $v \in U$ therefore gives
\[\xymatrix@C=10pt{k(v) \otimes \gker[j]{M} \ar@{^(->}[rr] && k(v) \otimes_k M \ar[rr]^-{\theta_{v, M}^j} \ar@{->>}[rd] && k(v) \otimes_k M \ar@{->>}[rr] && \coker\theta_{v, M}^j \\ &&& \makebox[2em][c]{\hss$k(v) \otimes \gim[j]{M}$\hss} \ar@{^(->}[ru]}\]
so \eqref*{thm413ikercom} and \eqref*{thm413iimloccom} hold.

Assume \eqref*{thm413iimloccom} holds.  Then by \cref{lemLocFree} the local $j$-rank of $M$ is both upper and lower semi-continuous on $U$.  As $U$ is connected it is therefore constant, hence \eqref*{thm413icjrk} holds.  Finally, assume \eqref*{thm413ikercom} holds.  Then \cref{lemLocFree} and
\[\dim_{k(v)}\ker\theta^j_{v, M} = \dim M - \rank^j(v, M)\]
immediately gives \eqref*{thm413icjrk}.
\end{proof}

\begin{Rmk}
For the Lie algebra $\slt$ we will see in \cref{propPslt} that $\PG[\slt] \simeq \mathbb P^1$ is a non-singular curve, so $\gker{M}$, a subsheaf of $\widetilde M$, is locally free even when $M$ does not have constant rank.  This shows that \eqref*{thm413ikerlf} does not imply \eqref*{thm413icjrk}.  Similarly one can check that the module $\Phi_{[0:1]}(4)$ provides a counterexample to $\eqref*{thm413iimlf} \Rightarrow \eqref*{thm413icjrk}$.  See Stark~\cite{starkHo2} for the definition of $\Phi_{[0:1]}(4)$ and additional $\slt$ sheaf computations.
\end{Rmk}

Consider a $p$-nilpotent linear operator $T\colon V \to V$.  Looking at the Young diagram of $\jtype(T)$ we see that $\ker T$ is a vector space whose dimension is the number of rows.
\begin{figure}[h]
\centering
\begin{picture}(240, 70)(20, 10)
\put(0, 50){\ydiagram[*(cyan)]{1,1,1,1,1} * [*(white)]{5,4,3,2,1}}
\put(120, 50){\ydiagram[*(cyan)]{3,2,1} * [*(white)]{5,4,3,2,1}}
\put(240, 50){\ydiagram[*(cyan)]{1,1,1} * [*(white)]{5,4,3,2,1}}
\put(15, 5){$\ker T$}
\put(135, 5){$\im T^2$}
\put(245, 5){$\ker T \cap \im T^2$}
\end{picture}
\end{figure}
As $T^i$ annihilates any row of length less or equal to $i$ we then get that the dimension of $\ker T \cap \im T^i$ gives the number of rows of length greater than $i$ in the partition.  Thus the dimension of the quotient space
\[\frac{\ker T \cap \im T^{i - 1}}{\ker T \cap \im T^i}.\]
gives the number of rows of length exactly $i$.  Motivated by this, Benson and Pevtsova~\cite{benPevtVectorBundles} make the following definition.

\begin{Def}
Given $1 \leq i \leq p$ we set
\[\F_i(M) = \frac{\gker{M} \cap \gim[i-1]{M}}{\gker{M} \cap \gim[i]{M}}.\]
\end{Def}

\begin{Lem} \label{lemFi}
Given $1 \leq i \leq p$ we have
\[\F_i(M)((i - 1)p^{r - 1}) \simeq \frac{\gker[i]{M} + \gim{M}}{\gker[i-1]{M} + \gim{M}}\]
\end{Lem}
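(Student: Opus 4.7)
The plan is to use the global operator itself as a bridge: apply $\Theta_M^{i-1}\colon\widetilde M\to \widetilde M((i-1)p^{r-1})$ and realize both sides of the asserted isomorphism as the image of one natural surjection.

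The first step is to factor $\Theta_M^{i-1}$ through its image and post-compose with the quotient by $\gim[i]{M}((i-1)p^{r-1})$, producing a map $\widetilde M\to (\gim[i-1]{M}/\gim[i]{M})((i-1)p^{r-1})$. This composite is surjective because $\gim[i-1]{M}((i-1)p^{r-1})$ is, by definition, the image of $\Theta_M^{i-1}$. Its kernel is computed on stalks: a section $x$ maps to zero exactly when $\Theta_M^{i-1}(x)=\Theta_M^i(y)$ for some $y$, equivalently when $x-\Theta_M(y)\in\gker[i-1]{M}$, which says precisely that $x\in\gker[i-1]{M}+\gim{M}$. This yields an isomorphism
\[\widetilde M/(\gker[i-1]{M}+\gim{M})\xrightarrow{\ \sim\ }(\gim[i-1]{M}/\gim[i]{M})((i-1)p^{r-1}).\]

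Next I would restrict this isomorphism to the subsheaf $(\gker[i]{M}+\gim{M})/(\gker[i-1]{M}+\gim{M})$ and compute its image. Since $\gim{M}$ is sent into $\gim[i]{M}((i-1)p^{r-1})$, the image of the subsheaf equals the image of $\gker[i]{M}$ alone. The key equality is
\[\Theta_M^{i-1}(\gker[i]{M}) = (\gker{M}\cap\gim[i-1]{M})((i-1)p^{r-1}):\]
the containment $\subseteq$ is immediate because one further application of $\Theta_M$ vanishes on $\gker[i]{M}$, while $\supseteq$ follows on stalks by taking any local lift $y$ of an element of $\gim[i-1]{M}((i-1)p^{r-1})$ lying in $\gker{M}((i-1)p^{r-1})$; then $\Theta_M^{i}(y)=\Theta_M(\Theta_M^{i-1}(y))=0$, so $y\in\gker[i]{M}$.

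Putting the pieces together, the image of the subsheaf in the target is $((\gker{M}\cap\gim[i-1]{M})+\gim[i]{M})/\gim[i]{M}((i-1)p^{r-1})$; applying the second isomorphism theorem (using $\gim[i]{M}\subseteq\gim[i-1]{M}$) collapses this to $(\gker{M}\cap\gim[i-1]{M})/(\gker{M}\cap\gim[i]{M}) = \F_i(M)$, twisted by $(i-1)p^{r-1}$, as required. The main obstacle is not conceptual but bookkeeping: keeping the degree shifts consistent throughout and being careful that all "surjectivity" and "lifting" claims really are sheaf-level statements (verified on stalks), not merely pointwise after specialization. Once those checks are in place, the isomorphism drops out of two applications of standard isomorphism theorems.
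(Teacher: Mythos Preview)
Your proof is correct and rests on the same key observation as the paper's, namely that $\Theta_M^{i-1}$ carries $\gker[i]{M}$ onto $(\gker{M}\cap\gim[i-1]{M})((i-1)p^{r-1})$ with kernel $\gker[i-1]{M}$. The organization differs slightly: the paper realizes $\F_i(M)((i-1)p^{r-1})$ as the cokernel of $\Theta_M\colon \gker[i+1]{M}/\gker[i]{M}\to \gker[i]{M}/\gker[i-1]{M}$ via a commutative square and then invokes the modular law, whereas you build a single isomorphism $\widetilde M/(\gker[i-1]{M}+\gim{M})\simeq(\gim[i-1]{M}/\gim[i]{M})((i-1)p^{r-1})$ and restrict it to the relevant subsheaf. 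Your packaging is marginally more direct (no modular law, no auxiliary square), but the two arguments are essentially the same.
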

\begin{proof}
The map
\[\gker[i]{M} \longrightarrow (\gker{M} \cap \gim[i-1]{M})((i - 1)p^{r - 1})\]
induced by $\Theta_M^{i-1}$ is surjective with kernel $\gker[i-1]{M}$.  Thus we have a commutative diagram
\[\xymatrix{(\gker{M} \cap \gim[i]{M})((i - 1)p^{r - 1}) \ar[r]^-{\text{incl}} & (\gker{M} \cap \gim[i-1]{M})((i - 1)p^{r - 1}) \\ \displaystyle \frac{\gker[i+1]{M}}{\gker[i]{M}}(-p^{r - 1}) \ar[u]^-{\Theta_M^i} \ar[r]_-{\Theta_M} & \displaystyle \frac{\gker[i]{M}}{\gker[i-1]{M}} \ar[u]_-{\Theta_M^{i-1}}}\]
whose vertical arrows are sheaf isomorphisms, so the sheaf $\F_i(M)((i - 1)p^{r - 1})$ in question is isomorphic to the cokernel of the bottom arrow.   The image of this bottom arrow is $(\gker[i]{M} \cap \gim{M} + \gker[i-1]{M})/\gker[i-1]{M};$ therefore, the second and third isomorphism theorems, together with the modular law, give
\[\F_i(M)((i - 1)p^{r - 1}) \simeq \frac{\gker[i]{M}}{\gker[i]{M} \cap \gim{M} + \gker[i-1]{M}} \simeq \frac{\gker[i]{M} + \gim{M}}{\gker[i-1]{M} + \gim{M}}\]
as desired.
\end{proof}

We will need two results from Benson and Pevtsova~\cite{benPevtVectorBundles}.  For the first we note that their proof, for the case of an elementary abelian group ($r = 1$), goes through in our more general setting once the appropriate shifts have been added.

\begin{Prop}[{\cite[2.2, 2.3]{benPevtVectorBundles}}] \label{propFilt}
The sheaf $\widetilde M$ has a filtration in which the filtered quotients are isomorphic to $\F_i(M)(jp^{r - 1})$ for $0 \leq j < i \leq p$.
\end{Prop}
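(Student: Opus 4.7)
The plan is to construct the required filtration by refining the canonical kernel filtration of $\widetilde M$. The starting chain is
\[0 = \gker[0]{M} \subseteq \gker[1]{M} \subseteq \cdots \subseteq \gker[p]{M} = \widetilde M,\]
and each successive quotient $\gker[i]{M}/\gker[i-1]{M}$ will be refined using the isomorphism implicit in the proof of \cref{lemFi}: the restriction of $\Theta_M^{i-1}$ to $\gker[i]{M}$ has kernel $\gker[i-1]{M}$, and its image is exactly $(\gker{M} \cap \gim[i-1]{M})((i-1)p^{r-1})$. One containment is immediate, and for the other any section of $(\gker{M} \cap \gim[i-1]{M})((i-1)p^{r-1})$ has the form $\Theta_M^{i-1}(x)$ for some $x \in \widetilde M$, and the vanishing of $\Theta_M^i(x) = \Theta_M(\Theta_M^{i-1}(x))$ forces $x \in \gker[i]{M}$. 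Thus
\[\gker[i]{M}/\gker[i-1]{M} \simeq (\gker{M} \cap \gim[i-1]{M})((i-1)p^{r-1}).\]

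On the target there is the obvious descending sub-filtration
\[\gker{M} \cap \gim[i-1]{M} \supseteq \gker{M} \cap \gim[i]{M} \supseteq \cdots \supseteq \gker{M} \cap \gim[p-1]{M} \supseteq 0,\]
whose successive quotients are, by the very definition of the sheaves $\F_k(M)$, exactly $\F_k(M)$ for $k = i, i+1, \ldots, p$. Twisting this sub-filtration by $(i-1)p^{r-1}$ and pulling it back along the isomorphism above refines the single step $\gker[i-1]{M} \subseteq \gker[i]{M}$ into a chain whose successive quotients are $\F_k(M)((i-1)p^{r-1})$ for $k = i, i+1, \ldots, p$.

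Concatenating these refinements for $i = 1, 2, \ldots, p$ produces a filtration of $\widetilde M$ whose successive quotients are the sheaves $\F_k(M)((i-1)p^{r-1})$ for $1 \leq i \leq k \leq p$. Setting $j = i - 1$, these become $\F_i(M)(jp^{r-1})$ for $0 \leq j < i \leq p$, as required; the total number of nontrivial quotients is $\sum_{i=1}^p(p-i+1) = p(p+1)/2$, which matches the number of admissible pairs $(j,i)$.

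The only nontrivial step is the isomorphism in the first paragraph; the shift $(i-1)p^{r-1}$ arises precisely because $\Theta_M^{i-1}$ raises degree by this amount, and all further twists propagate formally from there. As the author signals for the elementary abelian case at $r = 1$, the principal obstacle is simply the bookkeeping of the Serre twists throughout these nested filtrations, ensuring that the refinement of each step of the kernel filtration glues together into a single filtration with the stated quotients.
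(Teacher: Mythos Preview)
Your argument is correct. You refine the kernel filtration by first transporting each quotient $\gker[i]{M}/\gker[i-1]{M}$ via $\Theta_M^{i-1}$ onto $(\gker{M}\cap\gim[i-1]{M})((i-1)p^{r-1})$, and then use the defining chain $\gker{M}\cap\gim[k]{M}$ whose successive quotients are the $\F_k(M)$ by definition. This is clean: the identification of the filtered quotients with the $\F_k$ is immediate, and the only real work is the surjectivity/kernel check for $\Theta_M^{i-1}$, which you handle correctly.

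The paper (following Benson--Pevtsova) takes a slightly different route: it refines the kernel filtration by the image filtration \emph{inside} $\widetilde M$, using the intermediate terms $(\gker[j+1]{M}\cap\gim[p-i-1]{M})+\gker[j]{M}$, and then identifies each quotient
\[
\frac{(\mathcal K_{j+1}\cap\mathcal I_{i+1})+\mathcal K_j}{(\mathcal K_{j+1}\cap\mathcal I_i)+\mathcal K_j}
\;\simeq\;
\frac{\mathcal K_{j+1}\cap\mathcal I_{i+1}}{(\mathcal K_{j+1}\cap\mathcal I_i)+(\mathcal K_j\cap\mathcal I_{i+1})}
\]
with the appropriate $\F_m(M)(jp^{r-1})$. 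The two refinements are not literally the same filtration of $\widetilde M$ (the pulled-back terms in your construction are preimages under $\Theta_M^{i-1}$, not intersections with images), but both start from the kernel filtration and use image data to refine. What the paper's version buys is the manifest symmetry of the displayed quotient under interchanging kernel and image filtrations; this symmetry is precisely what is exploited in the proof of \cref{corFilt}, where one instead refines the image filtration by the kernel filtration to obtain the analogous filtration of $\gcoker[\ell]{M}$. Your approach, while perfectly adequate for the proposition itself, would need a separate dual argument to recover that corollary.
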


As a corollary to the proof of this proposition we get the following.

\begin{Cor} \label{corFilt}
The sheaf $\gcoker[\ell]{M}$ has a filtration in which the filtered quotients are isomorphic to $\F_i(M)(jp^{r - 1})$ for $1 \leq i \leq p$ and $\max(0, i-\ell) \leq j < i$.
\end{Cor}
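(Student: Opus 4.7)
The plan is to follow the construction used in the proof of Proposition \ref{propFilt} and identify which pieces of that filtration of $\widetilde M$ descend to the quotient $\gcoker[\ell]{M} = \widetilde M/\gim[\ell]{M}$. The filtration of $\widetilde M$ is constructed in two stages: first the coarse descending chain
\[\widetilde M = \gim[0]{M} \supseteq \gim{M} \supseteq \gim[2]{M} \supseteq \cdots \supseteq \gim[p]{M} = 0\]
(using $\Theta_M^p = 0$), and then a refinement of each successive quotient $\gim[j]{M}/\gim[j+1]{M}$ into pieces of the form $\F_i(M)(jp^{r-1})$. Since $\gim[\ell]{M} \subseteq \gim[j]{M}$ for every $j \leq \ell$, truncating the coarse chain at $\gim[\ell]{M}$ induces a filtration of $\gcoker[\ell]{M}$ whose successive quotients are exactly $\gim[j]{M}/\gim[j+1]{M}$ for $0 \leq j < \ell$; the refinements for $j \geq \ell$ are precisely what gets killed by passing to $\gcoker[\ell]{M}$.

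Next I would make each quotient explicit. Using the surjection $\Theta_M^j\colon \widetilde M(-jp^{r-1}) \twoheadrightarrow \gim[j]{M}$ with kernel $\gker[j]{M}(-jp^{r-1})$, a short computation shows that the preimage of $\gim[j+1]{M}$ is $(\gker[j]{M}+\gim{M})(-jp^{r-1})$: indeed, $\gim[j+1]{M} = \Theta_M^j\bigl(\gim{M}(-jp^{r-1})\bigr)$, so $\Theta_M^j(x) \in \gim[j+1]{M}$ iff $x$ differs from an element of $\gim{M}(-jp^{r-1})$ by one of $\gker[j]{M}(-jp^{r-1})$. Hence
\[\gim[j]{M}/\gim[j+1]{M} \simeq \bigl(\widetilde M/(\gker[j]{M}+\gim{M})\bigr)(-jp^{r-1}).\]

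To refine further I would filter the right-hand side by
\[\widetilde M = \gker[p]{M}+\gim{M} \supseteq \gker[p-1]{M}+\gim{M} \supseteq \cdots \supseteq \gker[j]{M}+\gim{M},\]
whose successive quotients are $\F_i(M)((i-1)p^{r-1})$ for $j+1 \leq i \leq p$ by Lemma \ref{lemFi}. After the overall twist by $-jp^{r-1}$, these become $\F_i(M)((i-1-j)p^{r-1})$ for $j+1 \leq i \leq p$. As $j$ ranges over $\{0, \ldots, \ell-1\}$ and $i$ over $\{j+1, \ldots, p\}$, the substitution $j' = i-1-j$ converts this range into exactly $1 \leq i \leq p$ and $\max(0, i-\ell) \leq j' < i$, matching the statement.

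The main obstacle is the preimage identification $(\Theta_M^j)^{-1}(\gim[j+1]{M}) = \gker[j]{M}+\gim{M}$ (with the appropriate shift); once this is in hand, the rest is a formal application of the isomorphism theorems together with Lemma \ref{lemFi}. That identification reduces to a small diagram chase using the surjectivity of $\Theta_M^j$ onto $\gim[j]{M}$ and the factorization $\Theta_M^{j+1} = \Theta_M^j \circ \Theta_M$, combined with the modular law for submodules.
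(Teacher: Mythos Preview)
Your argument is correct and is essentially the same as the paper's: both start from the image filtration of $\widetilde M$, truncate at $\gim[\ell]{M}$, and refine by kernels. The only difference is in packaging: the paper observes that the bifiltration quotients
\[\frac{\mathcal K_{j+1}\cap\mathcal I_{i+1}}{(\mathcal K_{j+1}\cap\mathcal I_i)+(\mathcal K_j\cap\mathcal I_{i+1})}\]
from the proof of Proposition~\ref{propFilt} are symmetric in $\mathcal K$ and $\mathcal I$, so one may simply swap the roles of the two filtrations (refining images by kernels rather than kernels by images) and truncate; you instead pull each successive quotient $\gim[j]{M}/\gim[j+1]{M}$ back through $\Theta_M^j$ and invoke Lemma~\ref{lemFi} directly. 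Under the identification $\Theta_M^j$ your subsheaves $(\gker[i]{M}+\gim{M})(-jp^{r-1})$ correspond exactly to the paper's $\gim[j]{M}\cap\gker[i-j]{M}+\gim[j+1]{M}$, so the two filtrations coincide.
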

\begin{proof}
In the proof of the previous proposition Benson and Pevtsova~\cite[2.2]{benPevtVectorBundles} refine the kernel filtration
\[0 \subseteq \mathcal K \subseteq \mathcal K_2 \subseteq \cdots \subseteq \mathcal K_{p-1} \subseteq \widetilde M,\]
where $\mathcal K_i = \gker[i]{M}$, by the image filtration
\[0 \subseteq \mathcal I \subseteq \mathcal I_2 \subseteq \cdots \subseteq \mathcal I_{p-1} \subseteq \widetilde M,\]
where $\mathcal I_i = \gim[p-i]{M}$, to obtain a filtration with quotients
\[\frac{(\mathcal K_{j + 1} \cap \mathcal I_{i + 1}) + \mathcal K_j}{(\mathcal K_{j + 1} \cap \mathcal I_i) + \mathcal K_j} \simeq \frac{\mathcal K_{j + 1} \cap \mathcal I_{i + 1}}{(\mathcal K_{j + 1} \cap \mathcal I_i) + (\mathcal K_j \cap \mathcal I_{i + 1})}\]
which are isomorphic to $\F_{p - i + j}(M)(jp^{r - 1})$.  Note that this quotient is symmetric with respect to image vs.\ kernel so for the corollary we refine the image filtration by the kernel filtration and truncate below the term $\gim[\ell]{M}$ to obtain a filtration of $\gcoker[\ell]{M}$.
\end{proof}

The conditions determining which $\F_i(M)(jp^{r - 1})$ appear as filtered quotients become intuitively very clear if we accept the following maxim: \emph{We should think of $\F_i(M)(jp^{r - 1})$ as a sheafification of the $j^\text{th}$ boxes in rows of length $i$ in the Jordan type of $M$}.  Here we mean for the boxes to be counted starting at $0$ from left to right (see \cref{figFi}).
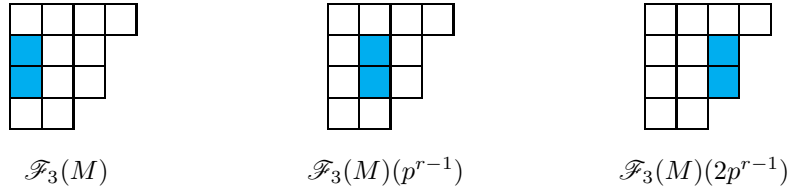
\begin{figure}[h]
\centering
\begin{picture}(240, 70)(20, 10)
\put(0, 50){\ydiagram[*(white)]{1} * [*(cyan)]{1,1,1} * [*(white)]{4,3,3,2}}
\put(120, 50){\ydiagram[*(white)]{2,1,1,1} * [*(cyan)]{2,2,2,1} * [*(white)]{4,3,3,2}}
\put(240, 50){\ydiagram[*(white)]{3,2,2} * [*(cyan)]{3,3,3} * [*(white)]{4,3,3,2}}
\put(5, 10){$\F_3(M)$}
\put(112, 10){$\F_3(M)(p^{r - 1})$}
\put(230, 10){$\F_3(M)(2p^{r - 1})$}
\end{picture}
\caption{Visualizing $\F_i(M)(jp^{r - 1})$} \label{figFi}
\end{figure}
In $\widetilde M$ we need, in rows of length $i$, boxes $0, 1, \ldots, i-1$.  On the other hand, in $\gcoker[\ell]{M}$ we only need the last $\ell$ boxes in a row.  If $i \leq \ell$ this is the entire row, otherwise it is only boxes $i - \ell, i - \ell + 1, \ldots, i-1$.

The second proposition needed from Benson and Pevtsova is \cref{propFOmega} below.  We note that Benson and Pevtsova's proof appears to implicitly assume that the module is constant Jordan type when they appeal to a ``block count'' (this relies on a previous proposition~\cite[2.1]{benPevtVectorBundles} which only applies to such modules).  We give an altered version of their proof which appeals only to a diagram chase and hence applies in the case when $M$ does not have constant Jordan type.  Recall that $\Omega(M)$, the \emph{Heller shift} of $M$, is defined to be the kernel of the map from the projective cover of $M$ to $M$.  

\begin{Prop}[{\cite[3.2]{benPevtVectorBundles}}] \label{propFOmega}
Let $M$ be a $G$-module and $1 \leq i < p$.  Then
\[\F_i(M) \simeq \F_{p-i}(\Omega M)((p-i)p^{r - 1}).\]
\end{Prop}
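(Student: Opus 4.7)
The plan is to realize both sides of the desired isomorphism as subquotients of the common ambient sheaf $\widetilde P$ for $P$ a projective cover of $M$, and to match them using the projectivity of $P$. To begin, fix a projective cover $P \twoheadrightarrow M$ with kernel $\Omega M$; tensoring the corresponding short exact sequence of $G$-modules with $\OPG$ (which is exact because $k$ is a field) gives
\[0 \to \widetilde{\Omega M} \to \widetilde P \to \widetilde M \to 0.\]
Since $\widetilde{\Omega M}$ is a $G$-submodule, $\Theta_P$ restricts to $\Theta_{\Omega M}$ on $\widetilde{\Omega M}$ and descends to $\Theta_M$ on $\widetilde M$. The key structural fact is that $P$ is projective: by \cref{thmProj} it has constant Jordan type $[p]^N$, and combined with \cref{thm413} this forces $\gker[j]{P} = \gim[p-j]{P}$ as subsheaves of $\widetilde P$ for each $1 \leq j \leq p-1$.

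Next, rewrite $\F_i(M)$ in terms of data on $\widetilde P$. Setting $N_j := \Theta_P^{-j}(\widetilde{\Omega M}(jp^{r-1}))$, a direct chase through $\widetilde M = \widetilde P/\widetilde{\Omega M}$ gives $\gker[j]{M} = N_j/\widetilde{\Omega M}$ and $\gim[j]{M} = (\gim[j]{P} + \widetilde{\Omega M})/\widetilde{\Omega M}$. Since $\widetilde{\Omega M} = N_0 \subseteq N_j$, \cref{lemFi} specializes to
\[\F_i(M)((i-1)p^{r-1}) \simeq \frac{N_i + \gim{P}}{N_{i-1} + \gim{P}}.\]

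Now push everything through $\Theta_P^i : \widetilde P \to \widetilde P(ip^{r-1})$. Because $\gker[i]{P} = \gim[p-i]{P} \subseteq \gim{P}$ is contained in $N_{i-1} + \gim{P} \subseteq N_i + \gim{P}$, the map $\Theta_P^i$ induces an isomorphism of this subquotient with the quotient of its images. A direct computation yields $\Theta_P^i(N_i) = \gker[p-i]{\Omega M}(ip^{r-1})$, using $\gim[i]{P} \cap \widetilde{\Omega M} = \gker[p-i]{P} \cap \widetilde{\Omega M} = \gker[p-i]{\Omega M}$; then $\Theta_P^i(N_{i-1}) = (\gker[p-i]{\Omega M} \cap \gim{\Omega M})(ip^{r-1})$, via the intermediate identity $\Theta_{\Omega M}(\gker[p-i+1]{\Omega M}) = (\gker[p-i]{\Omega M} \cap \gim{\Omega M})(p^{r-1})$; and finally $\Theta_P^i(\gim{P}) = \gim[i+1]{P}(ip^{r-1})$.

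The last step is to simplify the resulting expression using the modular law. The central identity, which cleanly reflects projectivity, is $\gker[p-i]{\Omega M} \cap \gim[i+1]{P} = \gker[p-i-1]{\Omega M}$; it holds because $\gim[i+1]{P} = \gker[p-i-1]{P}$ and $\gker[p-i-1]{P} \subseteq \gker[p-i]{P}$. Two applications of the modular law collapse the quotient to
\[\frac{\gker[p-i]{\Omega M} + \gim{\Omega M}}{\gker[p-i-1]{\Omega M} + \gim{\Omega M}}(ip^{r-1}),\]
which by \cref{lemFi} applied to $\Omega M$ is $\F_{p-i}(\Omega M)((p-1)p^{r-1})$. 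Cancelling the common $(i-1)p^{r-1}$ shift on both sides of the chain of isomorphisms yields $\F_i(M) \simeq \F_{p-i}(\Omega M)((p-i)p^{r-1})$. The main obstacle is purely bookkeeping: tracking the degree shifts as one repeatedly pivots between subsheaves of $\widetilde P$, $\widetilde{\Omega M}$, and $\widetilde M$—no individual step is deep, but the algebra is delicate.
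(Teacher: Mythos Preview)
Your proof is correct and rests on the same two ingredients as the paper's: the short exact sequence $0 \to \widetilde{\Omega M} \to \widetilde P \to \widetilde M \to 0$ and the identity $\gker[j]{P} = \gim[p-j]{P}$ for the projective cover $P$. The organization, however, is genuinely different. The paper works directly with the defining subquotients $\gker{M} \cap \gim[i-1]{M}$ and builds a snake-lemma-style connecting map $\delta$ into $\F_{p-i}(\Omega M)((p-i)p^{r-1})$, then checks by diagram chase that $\delta$ is surjective with kernel $\gker{M} \cap \gim[i]{M}$. You instead bracket the computation with two applications of \cref{lemFi}, passing to the ``dual'' description $(\gker[i]{} + \gim{})/(\gker[i-1]{} + \gim{})$ on both the $M$ side and the $\Omega M$ side; the middle step is then a clean push-forward by $\Theta_P^i$ followed by modular-law simplifications. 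Your route trades the diagram chase for explicit lattice algebra and makes the role of \cref{lemFi} more visible; the paper's route is more self-contained (it does not invoke \cref{lemFi}) but hides the computation inside the chase. One small remark: your appeal to \cref{thm413} for $\gker[j]{P} = \gim[p-j]{P}$ is slightly elliptical---what \cref{thm413} gives directly is that the inclusion $\gim[p-j]{P} \hookrightarrow \gker[j]{P}$ specializes to an isomorphism at every point, so a Nakayama argument finishes it; the paper's citation of \cref{lemHoFree} is equally terse, needing an implicit rank count.
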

\begin{proof}
Consider the diagram
\[\xymatrix{ 0 \ar[r] & \widetilde{\Omega_M} \ar[r] \ar[d]^-{\Theta_{\Omega_M}} & \widetilde{P_M} \ar[r] \ar[d]^-{\Theta_{P_M}} & \widetilde M \ar[r] \ar[d]^-{\Theta_M} & 0 \\ 0 \ar[r] & \widetilde{\Omega(M)}(p^{r - 1}) \ar[r] \ar[d]^-{\Theta_{\Omega(M)}^{p-i-1}} & \widetilde{P_M}(p^{r - 1}) \ar[r] \ar[d]^-{\Theta_{P_M}^{p-i-1}} & \widetilde M(p^{r - 1}) \ar[r] \ar[d]^-{\Theta_M^{p-i-1}} & 0 \\ 0 \ar[r] & \widetilde{\Omega(M)}((p-i)p^{r - 1}) \ar[r] & \widetilde{P_M}((p-i)p^{r - 1}) \ar[r] & \widetilde M((p-i)p^{r - 1}) \ar[r] & 0.}\]
Using $\Theta_{P_M}^p = 0$ and a diagram chase in the style of the snake lemma we get a well defined map
\[\delta\colon\gker{M} \cap \gim[i-1]{M} \longrightarrow \frac{\gker{\Omega(M)} \cap \gim[p-i-1]{\Omega(M)}}{\gker{\Omega(M)} \cap \gim[p-i]{\Omega(M)}}((p-i)p^{r - 1}).\]
As $P_M$ is a projective module it has constant Jordan type; therefore, \cref{lemHoFree} gives $\gim[j]{P_M} = \gker[p-j]{P_M}$ for all $j$.  One then uses this in the diagram chase to show that $\delta$ is surjective and the kernel is exactly $\gker{M} \cap \gim[i]{M}$ so it induces the desired isomorphism.
\end{proof}

Finally we have the following proposition which is merely a local version of a global result of Benson and Pevtsova, which appears as \cref{corBPLocFree} below.  The proof follows their original proof save that we avoid their specialization argument and instead use a short exact sequence to deduce that the $\F_i(M)|_U$ are locally free.

\begin{Prop} \label{propFiFree}
Let $U \subseteq \PG$ be open.  The module $M$ has constant Jordan type $[p]^{a_p}[p-1]^{a_{p-1}}\cdots[1]^{a_1}$ on $U$ if and only if for each $i$ the sheaf $\F_i(M)|_U$ is locally free of rank $a_i$.
\end{Prop}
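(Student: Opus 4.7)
The plan is to prove both directions by routing everything through the filtration machinery of \cref{propFilt}/\cref{corFilt}, \cref{thm413} (constancy of $j$-rank versus local freeness of kernel/image/cokernel sheaves), and the fiber-dimension criterion \cref{lemLocFree}. The clever observation from \cref{lemFi}---that $\Theta_M^{i-1}$ realizes $\gker{M}\cap\gim[i-1]{M}$ (up to shift) as the quotient $\gker[i]{M}/\gker[i-1]{M}$---will be the tool that lets us identify fibers of intersection sheaves with intersections of kernels and images on fibers, sidestepping the usual pathology of intersections under specialization.

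For the reverse direction, assume each $\F_i(M)|_U$ is locally free of rank $a_i$. Since $\OPG(dp^{r-1})$ is invertible, each shift $\F_i(M)(jp^{r-1})|_U$ is locally free. By \cref{corFilt}, $\gcoker[\ell]{M}|_U$ admits a filtration whose quotients are locally free, and extensions of locally free coherent sheaves are locally free; hence $\gcoker[\ell]{M}|_U$ is locally free for every $\ell$. \Cref{thm413} then yields constant $\ell$-rank on $U$ for every $\ell$, and the rank of $\gcoker[\ell]{M}|_U$ computed from the filtration is $\sum_i \min(i,\ell)\,a_i$, so the common value of $\rank^\ell(-, M)$ on $U$ is $\sum_i \max(0,i-\ell)\,a_i$. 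Plugging these into \cref{lemJrank} and computing column lengths recovers exactly the partition $[p]^{a_p}[p-1]^{a_{p-1}}\cdots[1]^{a_1}$.

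For the forward direction, assume constant Jordan type $[p]^{a_p}\cdots[1]^{a_1}$ on $U$. \Cref{lemJrank} gives constant $j$-rank on $U$ for every $j$, so \cref{thm413} makes each $\gker[j]{M}|_U$, $\gim[j]{M}|_U$, and $\gcoker[j]{M}|_U$ locally free, with specializations being the corresponding kernels/images/cokernels of $\theta_{v,M}^j$. The central step is to show that the natural map
\[(\gker{M}\cap\gim[j]{M})\otimes k(v)\longrightarrow \ker\theta_{v,M}\cap\im\theta_{v,M}^j\]
is an isomorphism for every $v\in U$. I will prove this by specializing the short exact sequence
\[0\to \gker[j]{M}\to \gker[j+1]{M}\xrightarrow{\Theta_M^{j}} (\gker{M}\cap\gim[j]{M})(jp^{r-1})\to 0\]
from (the proof of) \cref{lemFi}, using the known isomorphisms $\gker[j]{M}\otimes k(v)\simeq \ker\theta_{v,M}^j$ and tracking that both the kernel of $\ker\theta_{v,M}^{j+1}\twoheadrightarrow (\gker{M}\cap\gim[j]{M})\otimes k(v)$ and the kernel of $\ker\theta_{v,M}^{j+1}\twoheadrightarrow \ker\theta_{v,M}\cap\im\theta_{v,M}^j$ (via $\theta_{v,M}^j$) coincide with $\ker\theta_{v,M}^j$. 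Having this identification, the fiber dimension of $(\gker{M}\cap\gim[j]{M})|_U$ equals $\sum_{\ell\geq j+1} a_\ell$, a constant, and \cref{lemLocFree} makes the intersection locally free.

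Finally, specialize the defining sequence
\[0\to \gker{M}\cap\gim[i]{M}\to \gker{M}\cap\gim[i-1]{M}\to \F_i(M)\to 0.\]
Under the isomorphisms just established, the left-hand map becomes the genuine inclusion $\ker\theta_{v,M}\cap\im\theta_{v,M}^i\hookrightarrow \ker\theta_{v,M}\cap\im\theta_{v,M}^{i-1}$, so it is injective; hence $\dim_{k(v)}\F_i(M)\otimes k(v)$ is the difference $\sum_{\ell\geq i}a_\ell - \sum_{\ell\geq i+1}a_\ell = a_i$, constant on $U$. \Cref{lemLocFree} concludes that $\F_i(M)|_U$ is locally free of rank $a_i$. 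The main obstacle throughout is the intersection-with-specialization issue; it is resolved entirely by the factorization through $\Theta_M^{j}$ already packaged in \cref{lemFi}.
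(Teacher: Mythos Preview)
Your proof is correct.  The reverse direction (each $\F_i(M)|_U$ locally free $\Rightarrow$ constant Jordan type) matches the paper's argument essentially verbatim.  The forward direction, however, takes a genuinely different route.  You run a specialization argument: using the surjection $\gker[j+1]{M}\xrightarrow{\Theta_M^j}(\gker{M}\cap\gim[j]{M})(jp^{r-1})$ from the proof of \cref{lemFi} together with \cref{thm413}(6), you identify the fibre of $\gker{M}\cap\gim[j]{M}$ at $v$ with $\ker\theta_{v,M}\cap\im\theta_{v,M}^j$ inside $k(v)\otimes M$, read off the (constant) fibre dimension, and apply \cref{lemLocFree}.  The paper explicitly avoids specialization here (remarking that Benson--Pevtsova's original argument was of this type); instead it exhibits two short exact sequences
\[0\to\F_i(M)|_U\to\frac{\gim[i-1]{M}|_U}{\gim[i]{M}|_U}\xrightarrow{\Theta_M}\frac{\gim[i]{M}|_U}{\gim[i+1]{M}|_U}(p^{r-1})\to 0\]
and
\[0\to\frac{\gker[p-i]{M}|_U}{\gim[i]{M}|_U}\to\frac{\gim[i-1]{M}|_U}{\gim[i]{M}|_U}\xrightarrow{\Theta_M^{p-i}}\gim[p-1]{M}(p^{r-1})|_U\to 0,\]
uses \cref{thm413} and a forward reference to \cref{lemHoFree} to see the outer terms of the second sequence are locally free, deduces the middle term is, and then the first sequence gives $\F_i(M)|_U$ locally free.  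The paper's approach stays purely at the sheaf level and never has to worry about whether intersections commute with specialization; your approach is more elementary, needs no forward reference to \cref{lemHoFree}, and delivers the rank directly---at the price of the compatibility check (specialized inclusion really is the honest inclusion of kernels-intersect-images), which you handle correctly via the commutative square in the proof of \cref{lemFi}.
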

\begin{proof}
First assume $M$ has constant Jordan type.  There is a natural short exact sequence
\[0 \to \F_i(M)|_U \longrightarrow \frac{\gim[i-1]{M}|_U}{\gim[i]{M}|_U} \xrightarrow{\Theta_M} \frac{\gim[i]{M}}{\gim[i+1]{M}}(p^{r - 1})|_U \to 0;\]
therefore, to prove that each $\F_i(M)|_U$ is locally free we need only show that for each $i$ the quotient of $\gim[i-1]{M}|_U$ by $\gim[i]{M}|_U$ is locally free.  For that consider the short exact sequence
\[0 \to \frac{\gker[p-i]{M}|_U}{\gim[i]{M}|_U} \longrightarrow \frac{\gim[i-1]{M}|_U}{\gim[i]{M}|_U} \xrightarrow{\Theta_M} \gim[p-1]{M}(p^{r - 1})|_U \to 0.\]
By \cref{thm413} and \cref{lemHoFree} (whose proof does not rely on this result) the outer two sheaves are locally free therefore the middle sheaf is as well.

Next assume each $\F_i(M)|_U$, hence each $\F_i(M)(jp^{r - 1})|_U$, is locally free.  \cref{propFilt} then says that $\gcoker[\ell]{M}|_U$ has a filtration with locally free filtered quotients.  Inducting up this filtration we conclude that $\gcoker[\ell]{M}|_U$ is locally free.  By \cref{thm413} the module $M$ has constant $\ell$-rank on $U$.  This holds for all $\ell$ therefore $M$ has constant Jordan type on $U$.

Finally we must show that the ranks of the $\F_i(M)|_U$ give the Jordan type of $M$.  Let $v \in U$ be any point.  By \cref{thm413} the rank of the sheaf $\gim[i]{M}|_U$ is $\rank^i(v, M)$.  The short exact sequence
\[0 \to \gker{M}|_U \cap \gim[i]{M}|_U \longrightarrow \gim[i-1]{M}|_U \xrightarrow{\Theta_M} \gim[i]{M}(p^{r - 1})|_U \to 0\]
gives that $\gker{M}|_U \cap \gim[i]{M}|_U$ is locally free and its rank is $\rank^i(v, M) - \rank^{i-1}(v, M)$.  But by \cref{lemJrank} this is exactly the number of rows in $\jtype(v, M)$ of length greater or equal to $i$.  Finally, the defining short exact sequence
\[0 \to \gker{M}|_U \cap \gim[i]{M}|_U \longrightarrow \gker{M}|_U \cap \gim[i-1]{M}|_U \longrightarrow \F_i(M)|_U \to 0\]
gives that $\rank \F_i(M)|_U$ is exactly the number of rows of length $i$.
\end{proof}

\begin{Cor}[{\cite[7.4.12]{bensonElAb}, \cite[2.1]{benPevtVectorBundles}}] \label{corBPLocFree}
The module $M$ has constant Jordan type $[p]^{a_p}[p-1]^{a_{p-1}}\cdots[1]^{a_1}$ if and only if for each $i$ the sheaf $\F_i(M)$ is locally free of rank $a_i$.
\end{Cor}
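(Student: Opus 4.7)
The plan is to obtain this corollary as the special case $U = \PG$ of \cref{propFiFree}. Under the identifications $\F_i(M)|_{\PG} = \F_i(M)$, the condition ``$M$ has constant Jordan type on $\PG$ with multiplicities $[p]^{a_p}\cdots[1]^{a_1}$'' is by definition the global constant Jordan type condition, and ``$\F_i(M)|_{\PG}$ is locally free of rank $a_i$'' is ``$\F_i(M)$ is locally free of rank $a_i$''. So the corollary is a literal restatement of the proposition for this choice of $U$.

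The one hypothesis that must be checked before invoking \cref{propFiFree} is that $U = \PG$ is in fact a valid (connected) open subscheme, because the proof of \cref{propFiFree} rests on \cref{thm413}, which requires connectedness. By \cref{thmVG}, $k[V(G)]$ is a finitely generated connected graded $k$-algebra, and its quotient $k[V(G)]/\mathfrak N$ remains a finitely generated connected graded $k$-algebra. The Proj of such an algebra is connected: any disconnection would give a non-trivial idempotent in a homogeneous localization, which propagates back to a non-trivial idempotent in the degree-$0$ component, contradicting $(k[V(G)]/\mathfrak N)_0 = k$.

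With connectedness in hand, one direction of \cref{propFiFree} (applied to $U = \PG$) gives that constant Jordan type with the stated multiplicities forces each $\F_i(M)$ to be locally free of rank $a_i$, and the reverse direction recovers constant Jordan type and the correct row counts from local freeness and the ranks of the $\F_i(M)$. There is no genuine obstacle: the substantive content --- the short exact sequences used to propagate local freeness through the filtration of \cref{propFilt}, and the rank computation via \cref{lemJrank} --- has already been carried out at the level of an arbitrary connected open subscheme in \cref{propFiFree}, so the corollary is immediate.
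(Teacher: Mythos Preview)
Your proposal is correct and matches the paper's approach: the corollary is stated immediately after \cref{propFiFree} with no separate proof, so the intended argument is precisely to take $U = \PG$. Your added check that $\PG$ is connected (via the connected grading of $k[V(G)]/\mathfrak N$ from \cref{thmVG}) is a sensible detail the paper leaves implicit.
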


\section{The Sheaf $\Ho(M)$} \label{secHo}

In this section we introduce the sheaf $\Ho(M)$ and are motivated to study the following question: Is it true that $M$ is projective if and only if $\Ho(M) = 0$?  We discuss and then improve upon previous results by Friedlander and Pevtsova, but we note that in general the statement is false.  We prove that if $\Ho(M) = 0$ then the support of $M$ is contained in the singular locus of $\PG$, hence the question has an affirmative answer in the smooth case.  Finally, we show that the full subcategory of modules $M$ such that $\Ho(M) = 0$ is a thick subcategory of the stable category.

To motivate the definition of $\Ho(M)$ consider a $p$-nilpotent linear operator $T$.  We have already noted that $\dim \ker T$ is the number of rows in $\jtype(T)$ and $\ker T \cap \im T^{p-1} = \im T^{p-1}$ so $\dim \im T^{p-1}$ is the number of rows of size $p$.  Thus the dimension of
\[H_T = \frac{\ker T}{\im T^{p - 1}}\]
gives the number of rows of size less than $p$ in $\jtype(T)$.
\begin{figure}[h]
\centering
\begin{picture}(240, 70)(20, 0)
\put(0, 40){\ydiagram[*(cyan)]{1,1,1,1} * [*(white)]{5,5,4,1}}
\put(120, 40){\ydiagram[*(cyan)]{1,1} * [*(white)]{5,5,4,1}}
\put(240, 40){\ydiagram[*(white)]{1,1} * [*(cyan)]{1,1,1,1} * [*(white)]{5,5,4,1}}
\put(15, 0){$\ker T$}
\put(135, 0){$\im T^4$}
\put(245, 0){$\frac{\ker T}{\im T^4}$}
\end{picture}
\end{figure}
By \cref{thmProj} a module $M$ is projective if and only if its Jordan type at any $v \in \mathbb P(G)$ has only rows of size $p$, i.e., if and only if $H_{\theta_{v, M}} = 0$ for all $v \in \PG$.  The global version of this space is the sheaf $\Ho(M)$.

\begin{Def}
Given $0 \leq i \leq p$ we define
\[\Ho[i](M) = \frac{\gker[i]{M}}{\gim[p-i]{M}}.\]
\end{Def}

\setcounter{Ques}{0}
\begin{Ques} \label{quesMain}
Is $M$ projective if and only if $\Ho(M) = 0$?
\end{Ques}

As noted in the introduction Friedlander and Pevtsova have given a partial answer to this question.  They show that a module $M$ is projective if and only if it has constant rank, constant $(p-1)$-rank, and $\Ho(M) = 0$~\cite[5.19]{friedpevConstructions}.  The forward direction of the next lemma shows that the assumption of constant $(p-1)$-rank is redundant.  The converse direction is a new proof of Proposition 5.16 in Friedlander and Pevtsova~\cite{friedpevConstructions}.

\begin{Lem} \label{lemHoFree}
Let $U \subseteq \PG$ be a connected open subscheme and $M$ a $G$-module which has constant $i$-rank on $U$.  Then $\Ho[i](M)|_U$ is locally free if and only if $M$ has constant $(p-i)$-rank on $U$.
\end{Lem}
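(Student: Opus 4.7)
The plan is to leverage the defining short exact sequence
\[0 \to \gim[p-i]{M} \to \gker[i]{M} \to \Ho[i](M) \to 0,\]
where the inclusion $\gim[p-i]{M} \subseteq \gker[i]{M}$ holds because $\Theta_M^p = 0$ on any $kG$-module. The constant $i$-rank hypothesis, via the equivalence $\eqref{thm413icjrk} \Leftrightarrow \eqref{thm413ikercom}$ in \cref{thm413}, guarantees that $\gker[i]{M}|_U$ is locally free and that $k(v) \otimes \gker[i]{M} \simeq \ker\theta^i_{v,M}$ has dimension $\dim M - \rank^i(v,M)$ (a constant).

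For the forward direction, suppose $\Ho[i](M)|_U$ is locally free. Since the right-hand term of the sequence is locally free, $\mathrm{Tor}_1(k(v),\Ho[i](M)) = 0$, so the specialization at any $v \in U$ is exact; thus $\dim k(v) \otimes \gim[p-i]{M}$ is the difference of two constants, and by \cref{lemLocFree} the sheaf $\gim[p-i]{M}|_U$ is locally free. Now apply the canonical sequence $0 \to \gker[p-i]{M} \to \widetilde M \to \gim[p-i]{M}((p-i)p^{r-1}) \to 0$: local freeness of the right term again kills the relevant Tor, so the specialization is exact and gives $k(v) \otimes \gim[p-i]{M} \simeq \im\theta^{p-i}_{v,M}$. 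Therefore $\rank^{p-i}(v,M)$ is constant on $U$.

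For the converse, assume $M$ has constant $(p-i)$-rank on $U$. By \cref{thm413} applied to $j = p-i$, the sheaf $\gim[p-i]{M}|_U$ is locally free and $k(v) \otimes \gim[p-i]{M} \simeq \im\theta^{p-i}_{v,M}$. The composite specialization map $k(v) \otimes \gim[p-i]{M} \to k(v) \otimes \gker[i]{M} \to k(v) \otimes \widetilde M$ has injective second factor (by \eqref{thm413ikercom} with $j=i$) and injective total (by \eqref{thm413iimloccom} with $j=p-i$), so its first factor is injective. Combined with right-exactness of specialization, this shows the original sequence specializes to a short exact sequence, whence $\dim k(v) \otimes \Ho[i](M) = \dim M - \rank^i(v,M) - \rank^{p-i}(v,M)$ is constant on $U$, and \cref{lemLocFree} gives local freeness of $\Ho[i](M)|_U$.

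The main subtlety is not any one calculation but rather the bookkeeping around when specialization of the SES $0 \to \gim[p-i]{M} \to \gker[i]{M} \to \Ho[i](M) \to 0$ is exact: in each direction one must identify the correct term whose local freeness kills $\mathrm{Tor}_1$ (the cokernel in the forward direction, established by hypothesis; the kernel in the converse, established by factoring through the injection into $\widetilde M$). Everything else is a dimension count combined with \cref{lemLocFree} and the equivalences already proved in \cref{thm413}.
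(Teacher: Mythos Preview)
Your converse direction is correct, but there is a gap in the forward direction. After establishing that $\gim[p-i]{M}|_U$ is locally free, you invoke the canonical sequence $0 \to \gker[p-i]{M} \to \widetilde M \to \gim[p-i]{M}((p-i)p^{r-1}) \to 0$ and claim that exactness of its specialization yields $k(v) \otimes \gim[p-i]{M} \simeq \im\theta^{p-i}_{v,M}$. It does not: exactness of that specialized sequence says only that $k(v) \otimes \gker[p-i]{M}$ injects into $k(v) \otimes_k M$; it says nothing about the \emph{other} map, the specialization of the inclusion $\gim[p-i]{M} \hookrightarrow \widetilde M$, whose injectivity is what identifies $k(v) \otimes \gim[p-i]{M}$ with $\im\theta^{p-i}_{v,M}$ (this is condition \eqref{thm413iimloccom} of \cref{thm413}). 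Indeed, the remark following \cref{thm413} records that $\gim[j]{M}|_U$ locally free does \emph{not} in general imply constant $j$-rank, so local freeness of the image alone cannot close the argument.

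The fix is already in your hands: run your converse-direction factorization here as well. From $\Ho[i](M)|_U$ locally free the map $k(v) \otimes \gim[p-i]{M} \to k(v) \otimes \gker[i]{M}$ is injective, and from constant $i$-rank the map $k(v) \otimes \gker[i]{M} \to k(v) \otimes_k M$ is injective by \eqref{thm413ikercom}; the composite is then injective, which is \eqref{thm413iimloccom} for $j=p-i$ and hence gives constant $(p-i)$-rank. For comparison, the paper avoids all of this specialization bookkeeping by using a different short exact sequence,
\[0 \to \Ho[i](M)|_U \to \gcoker[p-i]{M}|_U \to \gim[i]{M}(ip^{r-1})|_U \to 0,\]
obtained from $\gim[p-i]{M} \subseteq \gker[i]{M} \subseteq \widetilde M$ by passing to quotients. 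Constant $i$-rank makes the right term locally free, so the left term is locally free if and only if the middle is, and by \cref{thm413} the middle is locally free exactly when $M$ has constant $(p-i)$-rank.
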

\begin{proof}
We have $\widetilde M/\gker[i]{M} \simeq \gim[i]{M}(ip^{r-1})$ and hence a short exact sequence
\[0 \to \left.\Ho[i](M)\right|_U \to \left.\gcoker[p-i]{M}\right|_U \to \left.\gim[i]{M}(ip^{r-1})\right|_U \to 0.\]
By \cref{thm413} the sheaf $\gim[i]{M}|_U$, and hence the right sheaf in the sequence, is locally free.  We conclude that the left sheaf is locally free if and only if the middle sheaf is, hence if and only if $M$ has constant $(p-i)$-rank on $U$ (using \cref{thm413} again).
\end{proof}

\begin{Prop} \label{propH}
The sheaves $\Ho(M)$ and $\Ho[p-1](M)$ each have a $(p-1)$-step filtration whose quotients are the sheaves $\F_i(M)$ and $\F_i(M)((i - 1)p^{r - 1})$, respectively, for $i = 1, 2, \ldots, p-1$.
\end{Prop}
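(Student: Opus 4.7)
\bigskip

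The plan is to exhibit both filtrations as the obvious chains of intersections with images (resp.\ sums with kernels) of powers of $\Theta_M$, using the crucial fact that $\Theta_M^p = 0$ to make the top and bottom of each chain collapse to the correct subsheaf.

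For $\Ho(M) = \gker{M}/\gim[p-1]{M}$, I would consider the decreasing chain of subsheaves of $\gker{M}$
\[
\gker{M} \;=\; \gker{M} \cap \gim[0]{M} \;\supseteq\; \gker{M} \cap \gim{M} \;\supseteq\; \cdots \;\supseteq\; \gker{M} \cap \gim[p-1]{M}
\]
(with the convention $\gim[0]{M} = \widetilde M$). Since $u_r^p = 0$ in the group ring, $\Theta_M^p = 0$, so $\gim[p-1]{M} \subseteq \gker{M}$; hence the last term equals $\gim[p-1]{M}$. Passing to the quotient by $\gim[p-1]{M}$ then yields a $(p-1)$-step filtration of $\Ho(M)$, and by definition the successive quotients are exactly $\F_i(M)$ for $i=1,\ldots,p-1$.

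For $\Ho[p-1](M) = \gker[p-1]{M}/\gim{M}$, I would dualize the previous construction by considering the increasing chain of subsheaves containing $\gim{M}$
\[
\gim{M} \;=\; \gker[0]{M} + \gim{M} \;\subseteq\; \gker{M} + \gim{M} \;\subseteq\; \cdots \;\subseteq\; \gker[p-1]{M} + \gim{M}
\]
(with the convention $\gker[0]{M} = 0$). Again $\Theta_M^p = 0$ forces $\gim{M} \subseteq \gker[p-1]{M}$, so the top term equals $\gker[p-1]{M}$. Quotienting by $\gim{M}$ gives a $(p-1)$-step filtration of $\Ho[p-1](M)$ whose successive quotients are the sheaves
\[
\frac{\gker[i]{M} + \gim{M}}{\gker[i-1]{M} + \gim{M}},
\]
which by \cref{lemFi} are isomorphic to $\F_i(M)((i-1)p^{r-1})$ for $i=1,\ldots,p-1$.

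There is no serious obstacle here: once one recognizes that $\Theta_M^p = 0$ is the only identity needed to collapse the ends of each chain, the two filtrations are essentially forced by the very definitions of $\F_i(M)$ and (via \cref{lemFi}) of $\F_i(M)((i-1)p^{r-1})$. The small bookkeeping point I would want to verify carefully is the conventions $\gker[0]{M}=0$ and $\gim[0]{M}=\widetilde M$, so that the $i=1$ term of each filtration quotient matches the formulas of \cref{lemFi} and the definition of $\F_i(M)$ without a shift.
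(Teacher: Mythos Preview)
Your proposal is correct and is essentially identical to the paper's own proof: the paper exhibits exactly the two chains $\gker{M}\cap\gim[i]{M}$ and $\gim{M}+\gker[i]{M}$, uses $\Theta_M^p=0$ to identify the endpoints, and invokes \cref{lemFi} for the quotients of the second filtration. The only cosmetic difference is that the paper writes the first chain in increasing order.
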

\begin{proof}
For $\Ho(M)$ the filtration is
\begin{align*}
\gim[p-1]{M} &= \gker{M} \cap \gim[p-1]{M} \\
&\subseteq \gker{M} \cap \gim[p-2]{M} \\
&\subseteq \cdots \\
&\subseteq \gker{M} \cap \gim{M} \\
&\subseteq \gker{M} \cap \gim[0]{M} = \gker{M}.
\end{align*}
For $\Ho[p-1](M)$ the filtration is
\begin{align*}
\gim{M} &= \gim{M} + \gker[0]{M} \\
&\subseteq \gim{M} + \gker{M} \\
&\subseteq \cdots \\
&\subseteq \gim{M} + \gker[p-2]{M} \\
&\subseteq \gim{M} + \gker[p-1]{M} = \gker[p-1]{M}
\end{align*}
and we use \cref{lemFi} to identify the quotients.
\end{proof}

\begin{Cor} \label{corH}
For each point $v \in \PG$ the stalk $\Ho(M)_v$ is zero if and only if $\Ho[p-1](M)_v$ is zero.
\end{Cor}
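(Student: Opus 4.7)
The plan is to deduce this corollary directly from \cref{propH} together with the observation that twisting by $\OPG(dp^{r-1})$ preserves the property of having a zero stalk at a given point.

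First, recall from the earlier remark that for any integer $d \in \mathbb{Z}$ the sheaf $\OPG(dp^{r-1})$ is locally free of rank $1$. Consequently, for any coherent sheaf $\mathscr{G}$ on $\PG$ and any point $v \in \PG$, the stalk
\[\mathscr{G}(dp^{r-1})_v \simeq \mathscr{G}_v \otimes_{\OPG[,v]} \OPG(dp^{r-1})_v\]
is isomorphic to $\mathscr{G}_v$ as an $\OPG[,v]$-module, since $\OPG(dp^{r-1})_v$ is free of rank $1$. In particular, $\mathscr{G}(dp^{r-1})_v = 0$ if and only if $\mathscr{G}_v = 0$. Applying this to $\mathscr{G} = \F_i(M)$ with $d = i - 1$ shows that $\F_i(M)_v = 0$ if and only if $\F_i(M)((i-1)p^{r-1})_v = 0$.

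Next, I would use the standard fact that stalks are exact, so if a sheaf $\mathscr{F}$ admits a finite filtration $0 = \mathscr{F}_0 \subseteq \mathscr{F}_1 \subseteq \cdots \subseteq \mathscr{F}_{p-1} = \mathscr{F}$, then $\mathscr{F}_v = 0$ if and only if each quotient $(\mathscr{F}_j/\mathscr{F}_{j-1})_v = 0$; this follows by induction from the short exact sequences $0 \to (\mathscr{F}_{j-1})_v \to (\mathscr{F}_j)_v \to (\mathscr{F}_j/\mathscr{F}_{j-1})_v \to 0$, where we use both that a quotient of zero is zero and that an extension of zero by zero is zero.

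Combining these two observations with \cref{propH}, which provides filtrations of $\Ho(M)$ and $\Ho[p-1](M)$ whose respective filtered quotients are $\F_i(M)$ and $\F_i(M)((i-1)p^{r-1})$ for $i = 1, \ldots, p-1$, we conclude that both $\Ho(M)_v$ and $\Ho[p-1](M)_v$ vanish if and only if $\F_i(M)_v = 0$ for every $i = 1, \ldots, p-1$, whence the stated equivalence. I do not expect any substantial obstacle: the only thing to be careful about is invoking the local freeness of $\OPG(dp^{r-1})$ (rather than of $\OPG(1)$, which may fail in the weighted case) to justify the invariance of vanishing stalks under the twist.
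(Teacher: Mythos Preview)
Your proof is correct and follows essentially the same approach as the paper: localize the filtrations of \cref{propH} and use that a filtered object vanishes if and only if all filtered quotients do. The paper's proof is terser, silently absorbing the twist by writing the localized quotients as $\F_i(M)_v$ (since $\OPG((i-1)p^{r-1})_v$ is free of rank $1$); your version simply makes this step explicit.
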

\begin{proof}
Localizing the filtrations from the previous proposition shows that $\Ho(M)_v$ and $\Ho[p-1](M)_v$ have filtrations with filtered quotients isomorphic to $\F_i(M)_v$ for $i = 1, 2, \ldots, p-1$, and a filtered object is zero if and only if each of its filtered quotients is.
\end{proof}

\begin{Thm} \label{thmHoProj}
Let $U \subseteq \PG$ be a connected open subscheme and assume that $\Ho(M)|_U = 0$.  Then the following conditions are equivalent.
\begin{enumerate}
\item $M$ is projective on $U$, \label{itemProj}
\item $M$ has constant rank on $U$,
\item $M$ has constant $(p-1)$-rank on $U$,
\item $\gcoker[p-1]{M}|_U$ is locally free,
\item $\gcoker{M}|_U$ is locally free,
\item $\gim[p-1]{M}|_U$ is locally free,
\item $\gim{M}|_U$ is locally free,
\item $\gker[p-1]{M}|_U$ is locally free,
\item $\gker{M}|_U$ is locally free. \label{itemKer}
\end{enumerate}
\end{Thm}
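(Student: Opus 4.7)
The plan is to close a loop through all nine conditions using two sheaf identifications that follow from the hypothesis. First, \cref{corH} gives $\Ho(M)_v = 0 \Leftrightarrow \Ho[p-1](M)_v = 0$ stalkwise, so the vanishing $\Ho(M)|_U = 0$ upgrades to $\Ho[p-1](M)|_U = 0$. Unwinding the definitions of the two $\Ho$-sheaves yields
\[\gker{M}|_U = \gim[p-1]{M}|_U \qquad \text{and} \qquad \gker[p-1]{M}|_U = \gim{M}|_U,\]
which immediately deliver (6) $\Leftrightarrow$ (9) and (7) $\Leftrightarrow$ (8).

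Next I would feed both vanishings into the short exact sequence
\[0 \to \Ho[i](M)|_U \to \gcoker[p-i]{M}|_U \to \gim[i]{M}(ip^{r-1})|_U \to 0\]
from the proof of \cref{lemHoFree}. Taking $i = 1$ and $i = p-1$, the leftmost term vanishes and we get isomorphisms $\gcoker[p-1]{M}|_U \simeq \gim{M}(p^{r-1})|_U$ and $\gcoker{M}|_U \simeq \gim[p-1]{M}((p-1)p^{r-1})|_U$. Since $\OPG(dp^{r-1})$ is a line bundle, twisting preserves local freeness, so these isomorphisms give (4) $\Leftrightarrow$ (7) and (5) $\Leftrightarrow$ (6). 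Applying \cref{thm413} with $j = 1$ and $j = p-1$ then gives (2) $\Leftrightarrow$ (5) and (3) $\Leftrightarrow$ (4). Finally, \cref{lemHoFree} applied to our zero (hence trivially locally free) $\Ho$-sheaves gives both (2) $\Rightarrow$ (3) (taking $i = 1$) and (3) $\Rightarrow$ (2) (taking $i = p-1$), so (2) $\Leftrightarrow$ (3). The implication (1) $\Rightarrow$ (2) is \cref{thmProj}, which makes the rest of the forward direction automatic.

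The key remaining link is (5) $\Rightarrow$ (1). By \cref{propH} the sheaf $\Ho(M)|_U$ carries a filtration whose quotients are $\F_i(M)|_U$ for $1 \leq i \leq p-1$, so the hypothesis forces $\F_i(M)|_U = 0$ in that range. Plugging this into the filtration of $\gcoker{M}|_U$ provided by \cref{corFilt} with $\ell = 1$, every quotient except the top one vanishes, giving $\gcoker{M}|_U \simeq \F_p(M)((p-1)p^{r-1})|_U$. Assuming (5), the left side is locally free, hence so is $\F_p(M)|_U$. With the already-established vanishing $\F_i(M)|_U = 0$ for $i < p$, \cref{propFiFree} concludes that $M$ has constant Jordan type $[p]^{a_p}$ on $U$, which by \cref{thmProj} is precisely projectivity on $U$.

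The main obstacle is this last step: neither the vanishing of $\Ho(M)|_U$ nor the local freeness of a single associated sheaf manifestly controls the higher $\F_i$, so without the collapse of the cokernel filtration the projectivity conclusion is not visible. Once the collapse is in hand, the entire theorem reduces to the bookkeeping described above, combining \cref{thm413}, \cref{lemHoFree}, and the identifications produced by the two $\Ho$-vanishings.
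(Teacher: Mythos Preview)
Your argument is correct and follows the same overall architecture as the paper's proof: both use \cref{corH} to obtain the two identifications $\gker{M}|_U = \gim[p-1]{M}|_U$ and $\gker[p-1]{M}|_U = \gim{M}|_U$, both invoke \cref{thm413} to link the rank, cokernel, image, and kernel conditions, and both close the loop by extracting $\F_i(M)|_U = 0$ for $i<p$ from \cref{propH} and then applying \cref{propFiFree}.

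The one place where you take a slightly different path is the final implication back to (1). The paper closes the loop at condition~(9) rather than~(5): since $\F_p(M) = \gker{M} \cap \gim[p-1]{M} = \gim[p-1]{M}$ by definition, the identification $\gker{M}|_U = \gim[p-1]{M}|_U$ gives $\F_p(M)|_U = \gker{M}|_U$ directly, so assuming~(9) one immediately has $\F_p(M)|_U$ locally free and \cref{propFiFree} applies. Your route via \cref{corFilt}, collapsing the filtration of $\gcoker{M}|_U$ down to the single surviving quotient $\F_p(M)((p-1)p^{r-1})|_U$, reaches the same conclusion from~(5) instead. Both are fine; the paper's version simply avoids the extra appeal to \cref{corFilt} by reading $\F_p$ straight off the definition.
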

\begin{proof}
\Cref{corH} gives $\Ho[p-1]{M}|_U = 0$ as well therefore $\gker{M}|_U = \gim[p-1]{M}|_U$ and $\gker[p-1]{M}|_U = \gim{M}|_U$.  Now by \cref{thm413} condition (\ref*{itemProj}) implies that all of the remaining conditions hold and each of the above conditions implies (\ref*{itemKer}).  All that is left is to show that (\ref*{itemKer}) implies (\ref*{itemProj}).  For this we observe that \cref{propH} gives $\F_i(M)|_U = 0$ for $1 \leq i < p$ and by definition $\F_p(M)|_U = \gker{M}|_U$ so \cref{propFiFree} applies.
\end{proof}

We have already defined the support, $\PG_M$, of a $G$-module $M$ as the set of points at which $M$ is not projective and the support of a sheaf $\mathscr G$ is the set of points at which the stalk $\mathscr G_v$ is non-zero.  Thus \cref{quesMain} can be interpreted as asking if it is true that $\PG_M$ is empty if and only if $\supp\Ho(M)$ is empty.  Both are known to be closed subsets so, more generally, we ask the following.

\begin{Ques} \label{quesSupp}
Given a $G$-module $M$ do we have $\PG_M = \supp\Ho(M)$?
\end{Ques}

Friedlander and Pevtsova have given one inclusion and we can obtain the other on the regular locus of $\PG$ which we denote by $\reg\PG$.  We remind the reader that this is the set of points $v \in \PG$ such that the stalk, $\OPG[, v]$, of the structure sheaf is a regular local ring.

\begin{Prop}[{\cite[5.25]{friedpevConstructions}}]
Let $M$ be a $G$-module.  Then
\[\supp\Ho(M) \subseteq \PG_M.\]
\end{Prop}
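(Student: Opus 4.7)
My plan is to prove the contrapositive: if $v \notin \PG_M$, i.e., $M$ is projective at $v$ so that $\jtype(v, M) = [p]^n$ with $n = \dim M/p$, then $\Ho(M)_v = 0$.

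The first step uses the fact, cited just after the definition of the support, that $\PG_M$ is closed in $\PG$. Its complement is then an open neighborhood of $v$, and I can shrink it to a connected open subscheme $U \ni v$ on which $M$ is projective at every point. By \cref{thmProj} this says exactly that $M$ has constant Jordan type $[p]^n$ on $U$.

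Next I would apply \cref{propFiFree} on this $U$: with constant Jordan type $[p]^{a_p}[p-1]^{a_{p-1}} \cdots [1]^{a_1}$ where $a_p = n$ and $a_i = 0$ for $1 \leq i \leq p-1$, each $\F_i(M)|_U$ is locally free of rank $a_i$, and so $\F_i(M)|_U = 0$ for $1 \leq i \leq p-1$. Now \cref{propH} supplies a filtration of $\Ho(M)$ whose successive quotients are precisely the sheaves $\F_i(M)$ for $1 \leq i \leq p-1$; restricting this filtration to $U$ shows every quotient vanishes, so $\Ho(M)|_U = 0$ and in particular $\Ho(M)_v = 0$.

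The only genuinely non-mechanical ingredient is the promotion of pointwise projectivity at $v$ to projectivity on an open neighborhood, which rests entirely on the closedness of $\PG_M$; once this is in hand, the rest is a direct application of the filtration of $\Ho(M)$ by the sheaves $\F_i(M)$ together with the rank interpretation of these sheaves for constant Jordan type modules.
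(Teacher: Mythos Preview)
The paper does not supply its own proof of this proposition; it is simply quoted from \cite[5.25]{friedpevConstructions}. Your argument is correct and in fact furnishes a self-contained proof using only the machinery developed earlier in the paper: \cref{propFiFree} and \cref{propH} both precede this proposition and neither depends on it, so there is no circularity.

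One small remark: the appeal to \cref{thmProj} is unnecessary and slightly misplaced. That theorem concerns global projectivity, whereas what you need is purely definitional: ``$M$ is projective at $v$'' means by definition that $\jtype(v,M) = [p]^{\dim M/p}$, so projectivity at every point of $U$ is already the statement that $M$ has constant Jordan type $[p]^n$ on $U$. With that adjustment the proof reads cleanly.
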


\begin{Cor} \label{corHproj}
If $M$ is projective then $\Ho(M) = 0$.
\end{Cor}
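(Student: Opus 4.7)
The plan is to chain together the structural results from the previous section rather than re-examine the global operator directly. The key observation is that Proposition~\ref{propH} already exhibits $\Ho(M)$ as a successive extension of the sheaves $\F_i(M)$ for $1 \leq i < p$, so it suffices to show that each of these $\F_i(M)$ vanishes when $M$ is projective.

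First, I would invoke Theorem~\ref{thmProj}: a projective $G$-module has the constant local Jordan type $v \mapsto [p]^{\dim M / p}$ on all of $\PG$. In particular, in the exponential notation $[p]^{a_p}[p-1]^{a_{p-1}} \cdots [1]^{a_1}$, every multiplicity $a_i$ with $1 \leq i < p$ is equal to zero, while $a_p = \dim M / p$.

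Next I would apply Corollary~\ref{corBPLocFree} (the global form of Proposition~\ref{propFiFree}), which translates constant Jordan type into a statement about the $\F_i(M)$. Since $a_i = 0$ for $1 \leq i < p$, the corollary gives that $\F_i(M)$ is locally free of rank zero, i.e.\ $\F_i(M) = 0$, for each such $i$.

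Finally, Proposition~\ref{propH} provides a $(p-1)$-step filtration of $\Ho(M)$ whose successive quotients are precisely the sheaves $\F_i(M)$ for $1 \leq i \leq p-1$. Inducting up this filtration, since each quotient vanishes, we conclude $\Ho(M) = 0$. No step here is really an obstacle: all the serious work was done in establishing Proposition~\ref{propH} and Corollary~\ref{corBPLocFree}, and this corollary simply harvests them.
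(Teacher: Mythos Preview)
Your argument is correct, but it is not the route the paper takes. In the paper this corollary is placed immediately after the cited Friedlander--Pevtsova proposition stating $\supp\Ho(M) \subseteq \PG_M$, and it is meant to be read as an immediate consequence of that inclusion: if $M$ is projective then $\PG_M = \varnothing$ by \Cref{thmProj}, hence $\supp\Ho(M) = \varnothing$, hence the coherent sheaf $\Ho(M)$ vanishes. Your approach instead bypasses the support statement entirely and works through the internal structure of $\Ho(M)$, combining \Cref{thmProj}, \Cref{corBPLocFree}, and the filtration of \Cref{propH}. This has the virtue of being self-contained within results actually proved (or reproved) in the present paper, whereas the paper's one-line deduction leans on the externally cited support inclusion. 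The paper's route is shorter and makes the logical placement of the corollary transparent; yours is more explicit about \emph{why} $\Ho(M)$ dies, namely because each graded piece $\F_i(M)$ of its filtration already vanishes.
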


\begin{Thm} \label{thmReg}
Let $M$ be a $G$-module.  Then
\[\PG_M \cap \reg\PG \subseteq \supp\Ho(M).\]
\end{Thm}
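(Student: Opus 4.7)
The plan is to prove the contrapositive: suppose $v \in \reg\PG$ and $(\Ho(M))_v = 0$; we must show $v \notin \PG_M$, i.e., $\jtype(v, M) = [p]^{\dim M/p}$. By \cref{corH} we also have $(\Ho[p-1](M))_v = 0$. Set $A = \OPG[,v]$, a regular local ring of dimension $d$ with residue field $k(v)$. Trivializing $\OPG(p^{r-1})$ on an affine neighborhood of $v$, the stalk $N := \widetilde{M}_v$ becomes an $A$-free module of rank $\dim M$ isomorphic to $A \otimes_k M$, and $\Theta_M$ induces an $A$-linear endomorphism $\theta\colon N \to N$ with $\theta^p = 0$ satisfying $\ker\theta = \im\theta^{p-1}$ and $\ker\theta^{p-1} = \im\theta$ as $A$-submodules of $N$.

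View $N$ as a module over $R := A[t]/t^p \cong A \otimes_k k[t]/t^p$ with $t$ acting as $\theta$, and compute $\mathrm{Tor}^R_\ast(k(v), N)$ via a K\"unneth-style bicomplex. Choose the Koszul resolution $F_\bullet \to k(v)$ over $A$ and the $2$-periodic resolution $G_\bullet \to k$ over $k[t]/t^p$ with alternating differentials $t$ and $t^{p-1}$; then $F_\bullet \otimes_k G_\bullet$ is an $R$-free resolution of $k(v)$. Tensoring with $N$ over $R$ yields a bicomplex with two spectral sequences converging to $\mathrm{Tor}^R_\ast(k(v), N)$. Taking Koszul (horizontal) homology first, the $A$-freeness of $N$ kills $\mathrm{Tor}^A_{>0}(k(v), N)$, so the sequence collapses onto the periodic complex applied to $k(v) \otimes_A N = k(v) \otimes_k M$, giving $\mathrm{Tor}^{k[t]/t^p}_\ast(k, k(v) \otimes_k M)$ with $t$ acting as $\theta_{v, M}$. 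Taking periodic (vertical) homology first, the vertical homology of $N$ in positive degrees vanishes -- the degree-$1$ group is $(\Ho(M))_v$, the degree-$2$ group is $(\Ho[p-1](M))_v$, both zero by hypothesis, and higher degrees vanish by the $2$-periodicity of $G_\bullet$ -- so the sequence collapses onto $\mathrm{Tor}^A_\ast(k(v), (\gcoker{M})_v)$. Equating,
\[
\mathrm{Tor}^A_n(k(v), (\gcoker{M})_v) \cong \mathrm{Tor}^{k[t]/t^p}_n(k, k(v) \otimes_k M) \quad\text{for all } n \geq 0.
\]

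Regularity now finishes the argument: over the regular local ring $A$, every finitely generated module has projective dimension at most $d$, so the left-hand side vanishes for $n > d$. But the right-hand side is computed by the $2$-periodic resolution of $k$ over $k[t]/t^p$ and is therefore $2$-periodic in $n \geq 1$, so its vanishing in any single large-enough degree forces it to vanish in every positive degree. In particular $\mathrm{Tor}^{k[t]/t^p}_1(k, k(v) \otimes_k M) = \ker\theta_{v, M}/\im\theta_{v, M}^{p-1} = 0$, which combined with the automatic inclusion $\im\theta_{v, M}^{p-1} \subseteq \ker\theta_{v, M}$ gives equality; hence $k(v) \otimes_k M$ is $k[t]/t^p$-free, $\jtype(v, M) = [p]^{\dim M/p}$, and $v \notin \PG_M$.

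The main technical obstacle is setting up the bicomplex carefully and identifying both spectral sequence degenerations with the claimed limits -- in particular, matching the vertical homology of $N$ to the stalks of $\Ho(M)$ and $\Ho[p-1](M)$ and confirming compatibility with the $\OPG(p^{r-1})$ shifts. Once this is in place the finish is essentially formal, combining Auslander--Buchsbaum-type bounded projective dimension over the regular local ring $A$ with the $2$-periodicity of Tor over $k[t]/t^p$.
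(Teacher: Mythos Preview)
Your argument is correct. Both proofs exploit the same core principle---a $2$-periodic homological object over a regular local ring must have its periodic part vanish---but they package it differently. The paper is more direct: it observes that the alternating powers of $\Theta_M$ localized at $v$ give a $2$-periodic free resolution of $(\gker{M})_v$ over $\OPG[,v]$, so regularity forces the syzygies (which are all isomorphic to $(\gker{M})_v$ or $(\gker[p-1]{M})_v$) to be free, and then invokes \cref{thmHoProj} to conclude projectivity on a neighborhood of $v$. Your approach instead computes $\mathrm{Tor}^{A[t]/t^p}_\ast(k(v),N)$ via two spectral sequences and compares, reaching the pointwise conclusion $\ker\theta_{v,M}=\im\theta_{v,M}^{p-1}$ directly without passing through local freeness of $\gker{M}$ or \cref{thmHoProj}. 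The paper's route is shorter and more elementary (no spectral sequences), and it yields the auxiliary fact that $(\gker{M})_v$ is free; your route is self-contained at the point $v$ and avoids the appeal to \cref{thmHoProj}, at the cost of setting up the bicomplex machinery.
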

\begin{proof}
We prove the contrapositive.  Assume $v \in \reg\PG$ and the stalk $\Ho(M)_v$ is zero, then we will show that $M$ is projective in a neighborhood of $v$.

Consider the complex
\[\xymatrix@C=40pt{\cdots \ar[r]^-{(\Theta_M)_v} & \widetilde M_v \ar[r]^-{(\Theta_M^{p-1})_v} & \widetilde M_v \ar[r]^-{(\Theta_M)_v} & \widetilde M_v \ar[r]^-{(\Theta_M^{p-1})_v} & (\gker{M})_v \ar[r] & 0.}\]
As $\Ho(M)_v = 0$ we also have $\Ho[p-1](M)_v = 0$ by \cref{corH}; thus this is a resolution of $(\gker{M})_v$.  Note that it is a $2$-periodic resolution; the even syzygies are $(\gker{M})_v$.  As $v$ is regular $(\gker{M})_v$ has finite projective dimension so the syzygies of any projective resolution are eventually projective, hence free.  We may therefore take a connected neighborhood $U \subseteq \PG$ of $v$ such that $\gker{M}|_U$ is locally free.  By \cref{thmHoProj}, $M$ is projective on $U$.
\end{proof}

\begin{Cor} \label{corSupp}
If $\PG$ is regular then $\PG_M = \supp\Ho(M)$.
\end{Cor}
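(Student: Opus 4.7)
The plan is to deduce the corollary by combining the two inclusions already in hand, noting that the regularity hypothesis trivializes the qualifier in \Cref{thmReg}.

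First, the inclusion $\supp\Ho(M) \subseteq \PG_M$ is given unconditionally by the Friedlander--Pevtsova proposition quoted just before \Cref{thmReg}, so no work is required for that direction; it holds whether or not $\PG$ is regular.

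For the reverse inclusion $\PG_M \subseteq \supp\Ho(M)$, I would observe that the hypothesis ``$\PG$ is regular'' means precisely that every point $v \in \PG$ is a regular point, i.e., $\reg\PG = \PG$. Then $\PG_M \cap \reg\PG = \PG_M$, and \Cref{thmReg} immediately upgrades to $\PG_M \subseteq \supp\Ho(M)$.

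Putting the two inclusions together yields $\PG_M = \supp\Ho(M)$. There is no real obstacle here; the entire content of the corollary has been packaged into \Cref{thmReg}, whose proof was the substantive step (using the $2$-periodic resolution together with the finiteness of projective dimension at regular points, and then invoking \Cref{thmHoProj} to conclude projectivity on a neighborhood). The corollary is simply the global form of that local statement when the regular locus is everything.
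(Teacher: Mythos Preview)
Your proof is correct and matches the paper's intended argument: the corollary is stated without proof in the paper, being an immediate consequence of combining the unconditional inclusion $\supp\Ho(M) \subseteq \PG_M$ from the Friedlander--Pevtsova proposition with \Cref{thmReg} under the hypothesis $\reg\PG = \PG$.
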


\begin{Cor} \label{corRegProj}
If $\PG$ is regular then $M$ is projective if and only if $\Ho(M) = 0$.
\end{Cor}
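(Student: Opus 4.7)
The plan is to obtain this corollary as a direct consequence of the two results that immediately precede it in the paper, namely \Cref{corHproj} and \Cref{corSupp}. Essentially, we only need to translate between the vanishing statements for $M$ and for $\Ho(M)$ via the support sets $\PG_M$ and $\supp\Ho(M)$.

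For the forward direction, I would simply invoke \Cref{corHproj}, which already asserts (without any regularity assumption on $\PG$) that if $M$ is projective then $\Ho(M) = 0$. No further argument is needed here; this half of the equivalence holds in full generality.

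For the reverse direction, the key observation is that \Cref{thmProj} (combined with the definition of $\PG_M$) says that $M$ is projective if and only if $\PG_M = \emptyset$. Meanwhile, a sheaf $\Ho(M)$ vanishes if and only if all its stalks vanish, i.e., if and only if $\supp\Ho(M) = \emptyset$. Under the assumption that $\PG$ is regular, \Cref{corSupp} tells us that $\PG_M = \supp\Ho(M)$, so the two emptiness conditions are equivalent. Chaining these equivalences gives $M$ projective $\Leftrightarrow \PG_M = \emptyset \Leftrightarrow \supp\Ho(M) = \emptyset \Leftrightarrow \Ho(M) = 0$.

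There is no real obstacle here: all the substantive content has been absorbed into \Cref{corSupp}, which in turn rests on \Cref{thmReg} (the regularity half) and the Friedlander--Pevtsova containment $\supp\Ho(M) \subseteq \PG_M$ (the easy half). The proof of \Cref{corRegProj} is thus just a one- or two-line deduction; the only thing to be careful about is stating which implication uses which prior result (forward: \Cref{corHproj}; reverse: \Cref{corSupp} together with \Cref{thmProj}).
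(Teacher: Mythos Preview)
Your proposal is correct and matches the paper's intended argument: the paper states \Cref{corRegProj} as an immediate corollary (with no proof given) of the preceding results, and your deduction from \Cref{corHproj} and \Cref{corSupp} (together with \Cref{thmProj}) is exactly the obvious one-line chain of equivalences the paper has in mind.
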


In general $\PG$ need not be regular; we will provide examples of when it is regular in the next section.  In the singular case we still have \cref{corHproj} so we will consider $\Ho$ as a functor out of the stable module category $\catd{G}{\underline{mod}}$.  Recall that the objects of $\catd{G}{\underline{mod}}$ are exactly the objects of $\catd{G}{mod}$.  For the hom-sets we define $\underline{\hom}_G(M, N)$ to be the quotient of $\hom_G(M, N)$ by the subspace of homomorphisms that factor through a projective.  We will consider the kernel of $\Ho$ as a subcategory of $\catd{G}{\underline{mod}}$.

\begin{Def}
Let $\K$ be the kernel of the functor
\[\Ho\colon\catd{G}{\underline{mod}} \to \cohPG,\]
that is, $\K$ is the full subcategory of $\catd{G}{\underline{mod}}$ consisting of those modules $M$ such that $\Ho(M) = 0$.
\end{Def}

The stable module category has a triangulated structure where the triangles are those candidate triangles that are isomorphic to
\[N \to M \to M/N \to \Omega^{-1}(N)\]
for some short exact sequence $0 \to N \to M \to M/N \to 0$ in $\catd{G}{mod}$.  We now show that $\K$ is a thick subcategory of $\catd{G}{\underline{mod}}$.  Recall that this means $\K$ is closed under isomorphisms, shifts, and taking summands and it satisfies the $2$-$3$ axiom: If 
\[X \to Y \to Z \to \Omega^{-1}(X)\]
is a triangle and $X, Y \in \K$ then also $Z \in \K$.

\begin{Thm} \label{thmThick}
$\K$ is a thick subcategory of $\catd{G}{\underline{mod}}$.
\end{Thm}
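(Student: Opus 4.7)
The definition of a thick subcategory requires four things: closure under isomorphism, under the Heller shift $\Omega^{\pm 1}$, under direct summands, and the $2$-out-of-$3$ axiom on triangles. Closure under isomorphism is immediate because $\Ho$ is a well-defined functor on $\catd{G}{\underline{mod}}$, which in turn relies on \cref{corHproj}. Closure under direct summands is equally routine: the assignment $M \mapsto \widetilde M$, together with the formation of kernels and images of powers of $\Theta_M$, all commute with finite direct sums, so $\Ho(M_1 \oplus M_2) \simeq \Ho(M_1) \oplus \Ho(M_2)$, and a direct sum of sheaves vanishes if and only if each summand does.

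For $\Omega$-closure I would combine \cref{propH} with \cref{propFOmega}. By \cref{propH}, a module $M$ lies in $\K$ precisely when the filtration quotients $\F_i(M)$ all vanish for $1 \leq i \leq p-1$, since a finite filtration vanishes iff each of its quotients vanishes. By \cref{propFOmega} we have $\F_i(M) \simeq \F_{p-i}(\Omega M)((p-i)p^{r-1})$, and since twisting by a multiple of $p^{r-1}$ is an invertible operation on sheaves over $\PG$, the vanishing of $\F_i(M)$ for $i = 1,\dots,p-1$ is equivalent to the vanishing of all $\F_j(\Omega M)$ for $j = 1,\dots,p-1$. One more application of \cref{propH} gives $\Omega M \in \K$, and the reverse direction is identical.

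The main step is the $2$-out-of-$3$ axiom. Any distinguished triangle in $\catd{G}{\underline{mod}}$ is isomorphic to one arising from a short exact sequence $0 \to X \to Y \to Z \to 0$ in $\catd{G}{mod}$, so by rotation together with the $\Omega$-closure already established it suffices to show that if $X, Y \in \K$ then $Z \in \K$. The plan is to exploit the identity $\Theta_M^p = 0$ to package the alternating compositions of $\Theta_M$ and $\Theta_M^{p-1}$ into a $2$-periodic complex of coherent sheaves
\[
\cdots \xrightarrow{\Theta_M^{p-1}} \widetilde M \xrightarrow{\Theta_M} \widetilde M(p^{r-1}) \xrightarrow{\Theta_M^{p-1}} \widetilde M(p \cdot p^{r-1}) \xrightarrow{\Theta_M} \widetilde M((p+1)p^{r-1}) \xrightarrow{\Theta_M^{p-1}} \cdots
\]
whose homology sheaves are, up to twist, alternately $\Ho(M)$ and $\Ho[p-1](M)$. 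By exactness of $M \mapsto \widetilde M$ and naturality of $\Theta$, the chosen short exact sequence yields a short exact sequence of such complexes, and the associated long exact sequence in homology interleaves $\Ho$ and $\Ho[p-1]$ of the three modules $X$, $Y$, $Z$. Since $\Ho(X) = \Ho(Y) = 0$, a stalkwise application of \cref{corH} forces $\Ho[p-1](X) = \Ho[p-1](Y) = 0$ as well, and in the long exact sequence both $\Ho(Z)$ and $\Ho[p-1](Z)$ then sit between zero terms and must themselves vanish; hence $Z \in \K$.

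The principal obstacle is a bookkeeping one: writing the $2$-periodic complex carefully with the correct shifts, identifying the snake-lemma connecting morphisms explicitly, and verifying that the induced long exact sequence really has the interleaved form described above. Once that identification is in place the vanishing argument is immediate.
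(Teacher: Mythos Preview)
Your argument is correct and matches the paper's proof in every essential respect: the same reduction to a short exact sequence, the same $2$-periodic complex built from $\Theta_M$ and $\Theta_M^{p-1}$, and the same use of \cref{corH} and \cref{propFOmega}. The only cosmetic difference is that the paper phrases the final step as ``the quotient complex is the cokernel of a quasi-isomorphism, hence acyclic,'' whereas you spell out the underlying long exact sequence in homology; these are the same argument.
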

\begin{proof}
As $\Ho$ is functorial and clearly additive one has that $\K$ is closed under isomorphisms and taking summands.  For closure under the shift operations note that $\Ho(M) = 0$ if and only if $\F_i(M) = 0$ for all $1 \leq i < p$.  \cref{propFOmega} then gives $\Ho(\Omega M) = 0$ if and only if $\Ho(M) = 0$.  All that is left is the $2$-$3$ axiom.

Using closure under isomorphism we see that it suffices to show that $N \subseteq M$ and $\Ho(N) = \Ho(M) = 0$ imply $\Ho(M/N) = 0$.  As in the proof of \cref{thmReg} the hypotheses $\Ho(N) = \Ho(M) = 0$ imply that the complexes
\[\xymatrix@C=30pt{\cdots \ar[r]^-{\Theta_N} & \widetilde N((1 - p)p^{r - 1}) \ar[r]^-{\Theta_N^{p-1}} & \widetilde N \ar[r]^-{\Theta_N} & \widetilde N(p^{r - 1}) \ar[r]^-{\Theta_N^{p-1}} & \cdots}\]
and
\[\xymatrix@C=30pt{\cdots \ar[r]^-{\Theta_M} & \widetilde M((1 - p)p^{r - 1}) \ar[r]^-{\Theta_M^{p-1}} & \widetilde M \ar[r]^-{\Theta_M} & \widetilde M(p^{r - 1}) \ar[r]^-{\Theta_M^{p-1}} & \cdots}\]
are acyclic.  By naturality of global operators the inclusion $N \hookrightarrow M$ induces a map between these complexes and we find that
\[\xymatrix@C=30pt{\cdots \ar[r]^-{\Theta_{M/N}} & \widetilde{M/N}((1 - p)p^{r - 1}) \ar[r]^-{\Theta_{M/N}^{p-1}} & \widetilde{M/N} \ar[r]^-{\Theta_{M/N}} & \widetilde{M/N}(p^{r - 1}) \ar[r]^-{\Theta_{M/N}^{p-1}} & \cdots}\]
is the cokernel of a quasi-isomorphism and hence is acyclic, giving $M/N \in \K$.
\end{proof}

We now give an example where $\K$ has infinite representation type.  This shows that in general the answer to Questions \ref{quesMain} and \ref{quesSupp} is no.  One might then hope that $\K$ has ideal closure, as the tensor ideal thick subcategories of $\catd{G}{\underline{mod}}$ have been classified~\cite[6.3]{friedpevPiSupp}.  Unfortunately, the example below will also show that in general $\K$ is not a tensor ideal.  When $G$ is unipotent, i.e., when $k$ is the unique simple $G$-module, it is known that all thick subcategories are tensor ideals (c.f. Benson et al.~\cite[3.5]{bencarrickThickCat}) but in all known cases of unitary $G$ one has that $\PG$ is smooth and so \cref{corRegProj} already identifies $\K$.

Consider the Lie algebra $\slt[3]$ with $p \neq 2$.  The adjoint action of $\SL_3$ induces an action on $\PG[{\slt[3]}]$ with two orbits: The regular orbit, which is open and given as the set of lines through nilpotent matrices of rank $2$ (matrices of Jordan type $[3]$), and the sub-regular orbit, which is closed and given as the set of lines through nilpotent matrices of rank $1$ (matrices of Jordan type $[2][1]$).  The regular locus of $\PG[{\slt[3]}]$ is exactly the regular orbit~\cite[7.14]{jantzenNilpOrb}.

\begin{Ex} \label{exHCounter}
Assume $\mathrm{char} \ k = 3$ and recall that $\slt[3]$ can be considered as a height $1$ infinitesimal group scheme.  Let $M = k^3$ be the standard representation of $\slt[3]$.  Using Macaulay2~\cite{M2} one can compute that $\Ho(M) = 0$.  The support of $M$ is exactly the sub-regular orbit of $\PG[{\slt[3]}]$, so $M$ is not projective.  Moreover, the dimension of the support is well known to be the complexity of $M$.  As $M$ is indecomposable and the sub-regular orbit has dimension $4$, the Heller shifts $\Omega^n(M)$ are indecomposable of increasing dimensions.  These shifts therefore yield countably many non-isomorphic indecomposable modules contained in $\K$.

We can also see from this example that $\K$ is not closed under tensor products; in particular, it is not a tensor ideal thick subcategory.  This is also done using Macaulay2.  We first verify that $\Ho(M^\ast) = 0$ and then we compute the support of the sheaf $\Ho(M \otimes M^\ast)$ and observe that it is not empty.

The Macaulay2 code for these computations can be found in \cref{figMacCode}.  The variables {\tt theta}, {\tt thetaDual}, and {\tt thetaTens} contain the module homomorphisms corresponding to $\Theta_M$, $\Theta_{M^\ast}$, and $\Theta_{M \otimes M^\ast}$ respectively.

\begin{figure}[h]
\begin{minipage}{5in}
\begin{verbatim}
	i1 : R = ZZ/3[x1, x2, x3, y1, y2, y3, h7, h8]
	i2 : M = matrix{{h7, x1, x3},{y1, h8-h7, x2},{y3, y2, -h8}}
	i3 : kPG = R/(radical ideal M^3)
	i4 : theta = map(kPG^3, kPG^3, sub(M, kPG), Degree => 1)
	i5 : minimalPresentation(ker theta/image theta^2)
	o5 = 0
	i6 : thetaDual = -transpose theta
	i7 : minimalPresentation(ker thetaDual/image thetaDual^2)
	o7 = 0
	i8 : thetaTens = (theta ** id_(kPG^3)) + (id_(kPG^3) ** thetaDual)
	i9 : radical ann (ker thetaTens/image thetaTens^2)
	o9 = ideal (x3*y3 + h7*h8, x2*y3 + y1*h8, x1*y3 - y2*h7, y1*y2 +
	     -------------------------------------------------------------
	                                                     2
	     y3*h7 - y3*h8, x3*y2 + x1*h8, x2*y2 - h7*h8 + h8 , x3*y1 -
	     -------------------------------------------------------------
	     x2*h7, x1*x2 + x3*h7 - x3*h8)
\end{verbatim}
\end{minipage}
\caption{Macaulay2 code for \cref{exHCounter}} \label{figMacCode}
\end{figure}
\end{Ex}

\section{Regular support varieties} \label{secReg}

Given the results of the previous section it is useful to have examples of groups $G$ such that $\PG$ is regular.  We give several such examples in this section, begining with some obvious examples of Lie algebras $\mathfrak g$ such that $\PG[{\mathfrak g}]$ is regular.  Let $\mathfrak u_n$ and $\mathfrak b_n$ be the Lie algebras of $\U_n$ and $\B_n$ respectively.  Also, let $\mathfrak e_n$ be the elementary abelian Lie algebra of dimension $n$, i.e., $\mathfrak e_n$ is a vector space of dimension $n$ with trivial bracket and $p$-operation.
\begin{Prop} \label{propLiePG} \mbox{}
\begin{enumerate}
\item If $p \geq n$ then $\PG[{\mathfrak u_n}] \simeq \mathbb P^{\frac{1}{2}(n+1)(n-2)}$.
\item $\PG[{\mathfrak e_n}] \simeq \mathbb P^{n-1}$.
\item If $p \geq n$ then $\PG[{\mathfrak b_n}] \simeq \mathbb P^{\frac{1}{2}(n+1)(n-2)}$.
\end{enumerate}
\end{Prop}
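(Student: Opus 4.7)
The plan is to compute $V(\mathfrak g) = \mathcal N_p(\mathfrak g)$ in the explicit form given in \cref{exVlie} for each of the three Lie algebras, realized as subalgebras of $\gln$ so that the $p$-operation is just the matrix $p$-th power. I would then take the reduction and $\proj$, using the fact (noted after \cref{thmVG}) that for a height $1$ group scheme the coordinate ring of $V$ is generated in degree $1$, so $\proj$ produces ordinary projective space.

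Part (2) is essentially tautological: the $p$-operation on $\mathfrak e_n$ is zero by definition, so the generic element $\sum_i x_i \otimes g_i$ automatically satisfies $(-)^{[p]} = 0$ and every $f_i$ vanishes. Hence $V(\mathfrak e_n) = \mathbb A^n$ is already reduced, giving $\PG[{\mathfrak e_n}] = \mathbb P^{n-1}$.

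For part (1), I would choose the basis $\{E_{ij}\}_{i<j}$ of $\mathfrak u_n$ with dual coordinates $x_{ij}$, set $R = k[x_{ij}]$, and observe that the generic element $X = \sum_{i<j} x_{ij} \otimes E_{ij}$ is a strictly upper triangular matrix in $M_n(R)$, so $X^n = 0$. The assumption $p \geq n$ then gives $X^{[p]} = X^p = 0$, every $f_{ij}$ vanishes, and $V(\mathfrak u_n) = \mathbb A^{\binom{n}{2}}$. The coordinate ring is already reduced, so $\PG[{\mathfrak u_n}] = \mathbb P^{\binom{n}{2} - 1}$, and the elementary identity $\binom{n}{2} - 1 = \frac{1}{2}(n+1)(n-2)$ completes this case.

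Part (3) is the main obstacle, because $\mathfrak b_n$ is not nilpotent as a Lie algebra, so I cannot expect $V(\mathfrak b_n)$ itself to equal $\mathfrak b_n$; the diagonal coordinates must die in the nilradical. I would choose the basis $\{E_{ij}\}_{i \leq j}$ with dual coordinates $x_{ij}$, write $h_i := x_{ii}$, and decompose the generic element as $X = D + N$ with $D = \mathrm{diag}(h_1, \ldots, h_n)$ and $N$ its strict upper triangular part, then establish two inclusions for the defining ideal $I$ of $V(\mathfrak b_n)$ inside $R = k[x_{ij} : i \leq j]$. First, for any upper triangular matrix the diagonal of $X^p$ is $(h_1^p, \ldots, h_n^p)$, so $h_i^p \in I$ and therefore $(h_1, \ldots, h_n) \subseteq \sqrt{I}$. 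Second, reducing $X$ modulo $(h_1, \ldots, h_n)$ turns it into the strictly upper triangular matrix $N$, and by the argument of part (1) we have $N^p = 0$; hence $X^p \equiv 0 \pmod{(h_1, \ldots, h_n)}$, yielding $I \subseteq (h_1, \ldots, h_n)$. Since $(h_1, \ldots, h_n)$ is prime it equals $\sqrt{I}$, so the reduced coordinate ring collapses to the polynomial ring $k[x_{ij} : i < j]$ in the $\binom{n}{2}$ off-diagonal coordinates, giving $\PG[{\mathfrak b_n}] = \mathbb P^{\binom{n}{2} - 1}$ just as in part (1).
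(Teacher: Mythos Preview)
Your proposal is correct and follows the same approach as the paper: for (1) and (2) you and the paper both observe that every element of the Lie algebra is $p$-nilpotent so the nullcone is the whole space, and for (3) both arguments rest on the fact that an upper triangular matrix is nilpotent if and only if it is strictly upper triangular. The only difference is cosmetic: the paper dispatches (3) in one line by noting that the inclusion $\mathfrak u_n \hookrightarrow \mathfrak b_n$ identifies the $k$-points of the two restricted nullcones (which suffices since $\PG$ is reduced and $k$ is algebraically closed), whereas you carry out the equivalent radical computation $\sqrt I = (h_1,\dots,h_n)$ explicitly.
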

\begin{proof}
For $(1)$ note that $p \geq n$ implies that every matrix in $\mathfrak u_n$ is $p$-nilpotent.  Thus $\PG[{\mathfrak u_n}]$ is the projective variety of lines in $\mathfrak u_n$ and
\[\dim_k\mathfrak u_n = \frac{1}{2}n(n-1) = \frac{1}{2}(n+1)(n-2) + 1.\]
A similar argument gives $(2)$.  For $(3)$ note that an upper triangular matrix is nilpotent if and only if it's strictly upper triangular, so for any $n$ the inclusion $\mathfrak u_n \to \mathfrak b_n$ induces an isomorphism of the $k$-points of their restricted nullcones.
\end{proof}

\begin{Prop} \label{propPslt}
The support variety of $\slt$ is $\PG[{\slt}] = \proj\frac{k[x, y, z]}{xy + z^2} \simeq \mathbb P^1$.
\end{Prop}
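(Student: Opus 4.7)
The plan is to apply the recipe of \cref{exVlie} directly to $\slt$ with its standard basis $e, f, h$ and dual coordinates $x, y, z$. The generic element is
\[X = xe + yf + zh = \begin{pmatrix} z & x \\ y & -z \end{pmatrix} \in k[x, y, z] \otimes_k \slt,\]
and the essential observation is that $X^2 = (xy + z^2) \cdot I$ in $\gln[2]$. I will use this to compute the defining ideal of $\mathcal N_p(\slt)$, show that its radical is the principal ideal $(xy + z^2)$, and then identify the resulting quadric in $\mathbb P^2$ with $\mathbb P^1$ via a twisted $2$-uple Veronese embedding.

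For odd $p$ the restricted structure on $\slt$ agrees with the $p$-th matrix power in $\gln[2]$, so $X^{[p]} = (xy + z^2)^{(p-1)/2} X$ and the three defining polynomials are $f_i = (xy+z^2)^{(p-1)/2} x_i$. For $p = 2$ a short separate computation using the restricted-Lie-algebra axiom $(U + V)^{[2]} = U^{[2]} + V^{[2]} + [U, V]$ together with the vanishing of $2e$ and $2f$ yields $X^{[2]} = (xy + z^2) h$, so $f_1 = f_2 = 0$ and $f_3 = xy + z^2$. In the odd case the defining ideal factors as $(xy + z^2)^{(p-1)/2}(x, y, z)$, and the identity $(xy + z^2)^{m+1} = y \cdot (xy + z^2)^m x + z \cdot (xy + z^2)^m z$ shows that $xy + z^2$ lies in its radical; the reverse inclusion follows because $(xy + z^2)$ is itself a prime ideal containing the defining ideal. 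Hence $\PG[\slt] = \proj k[x, y, z] / (xy + z^2)$ in every characteristic.

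Finally, the morphism $\mathbb P^1 \to \mathbb P^2$, $[s:t] \mapsto [s^2 : -t^2 : st]$, is a closed immersion (a linear twist of the $2$-uple Veronese) whose image is cut out scheme-theoretically by $xy + z^2 = 0$, so it descends to the desired isomorphism $\mathbb P^1 \simeq \PG[\slt]$. The main bookkeeping obstacle is the characteristic $2$ case, where $\slt$ is not a restricted sub-algebra of $\gln[2]$ (since $h^2 = I \notin \slt$) and so the value of $X^{[2]}$ must be computed directly on the basis rather than read off from the embedding into $\gln[2]$; apart from this the argument is entirely computational.
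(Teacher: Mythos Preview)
Your proof is correct and follows essentially the same route as the paper: both identify the reduced nullcone as the quadric $xy+z^2=0$ and then use the map $[s:t]\mapsto[s^2:-t^2:st]$ (equivalently, the ring map $(x,y,z)\mapsto(s^2,-t^2,st)$ identifying $k[x,y,z]/(xy+z^2)$ with the second Veronese subring of $k[s,t]$) to exhibit the isomorphism with $\mathbb P^1$. The paper's computation of the radical is a bit slicker---it simply observes that a traceless $2\times 2$ matrix is nilpotent if and only if its square $(xy+z^2)I$ vanishes, so the $k$-points of the nullcone are exactly $V(xy+z^2)$, and primality of $xy+z^2$ then gives the radical immediately, bypassing your explicit computation of $X^{[p]}$, the algebraic identity for the radical, and the separate characteristic $2$ discussion.
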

\begin{proof}
A matrix $\left[\begin{smallmatrix} z & x \\ y & -z \end{smallmatrix}\right]$ with entries in the field $k$ is nilpotent if and only if its square $(xy + z^2)\left[\begin{smallmatrix} 1 & 0 \\ 0 & 1 \end{smallmatrix}\right]$ is zero.  Thus the radical of the ideal generated by the relation $\left[\begin{smallmatrix} z & x \\ y & -z \end{smallmatrix}\right]^p = 0$ is $(xy + z^2)$ and immediately we get $\PG[{\slt}] = \proj\frac{k[x, y, z]}{xy + z^2}$.  The ring homomorphism
\begin{align*}
\frac{k[x, y, z]}{xy + z^2} & \to k[s, t] \\
(x, y, z) &\mapsto (s^2, -t^2, st).
\end{align*}
is easily seen to be injective and identifies $\frac{k[x, y, z]}{xy + z^2}$ with $k[s, t]^{[2]} = k[s^2, t^2, st]$.  The induced map $\iota\colon\proj k[s, t] \to \proj\frac{k[x, y, z]}{xy + z^2}$ is therefore an isomorphism of schemes.
\end{proof}

Recall that $\mathbb P(1, p, \ldots, p^{r-1})$ is the weighted projective space with weight vector $(1, p, \ldots, p^{r-1})$.  This means $\mathbb P(1, p, \ldots, p^{r-1}) = \proj k[w_1, \ldots, w_r]$ with grading given by $\deg w_i = p^{i-1}$.

\begin{Prop}
Let $n, r \in \mathbb N$.  Then
\[V(\B_{n(r)})_\red = V(\U_{n(r)})_\red\]
and
\[\PG[\B_{2(r)}] = \PG[\U_{2(r)}] = \mathbb P(1, p, \ldots, p^{r-1}).\]
\end{Prop}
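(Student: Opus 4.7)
The plan is to prove the scheme equality $V(\B_{n(r)})_\red = V(\U_{n(r)})_\red$ first, since $\PG[\B_{2(r)}] = \PG[\U_{2(r)}]$ then follows at once from the definition $\PG = \proj k[V(G)]/\mathfrak N$, and afterwards to compute $V(\U_{2(r)})$ directly and read off the grading that identifies its projectivisation with $\PG[1, p, \ldots, p^{r-1}]$.

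For the first equality, both $\U_n$ and $\B_n$ are of exponential type, so \cref{exVlier} gives $V(\U_{n(r)}) = \mathcal N_p^{[r]}(\mathfrak u_n)$ and $V(\B_{n(r)}) = \mathcal N_p^{[r]}(\mathfrak b_n)$. The inclusion $\mathfrak u_n \hookrightarrow \mathfrak b_n$ of restricted Lie algebras induces a closed embedding $\iota\colon V(\U_{n(r)}) \hookrightarrow V(\B_{n(r)})$. Since a closed immersion that is surjective on the underlying topological space is an isomorphism after reducing, it suffices to show $\iota$ is surjective on $k$-points, i.e.\ that every pairwise commuting $r$-tuple of $p$-nilpotent elements of $\mathfrak b_n$ lies in $\mathfrak u_n^r$. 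The key point is that for an upper triangular matrix $v$ with diagonal $(d_1, \ldots, d_n)$ the $(i,i)$-entry of $v^p$ is $d_i^p$, since when one expands $(v^p)_{ii} = \sum v_{i, j_1} v_{j_1, j_2} \cdots v_{j_{p-1}, i}$ only the path $j_1 = \cdots = j_{p-1} = i$ is compatible with the upper triangular constraint. Thus $v^{[p]} = v^p = 0$ forces $d_i^p = 0$ in $k$, so $d_i = 0$ and $v \in \mathfrak u_n$; the commutation conditions then transfer from $\mathfrak b_n^r$ to $\mathfrak u_n^r$ automatically.

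For the computation of $\PG[\U_{2(r)}]$, note that $\mathfrak u_2$ is one-dimensional abelian, generated by $e_{12}$, with $e_{12}^{[p]} = 0$. Hence $\mathcal N_p(\mathfrak u_2) = \mathfrak u_2 \simeq \mathbb A^1$, and the pairwise commutation condition defining $\mathcal N_p^{[r]}(\mathfrak u_2)$ inside $(\mathfrak u_2)^r$ is vacuous. Therefore $V(\U_{2(r)}) = \mathbb A^r = \spec k[w_1, \ldots, w_r]$, which is already reduced. By the degree recipe in \cref{exVlier} the coordinate function $w_i$ on the $i$-th factor has weight $p^{i-1}$, so $\PG[\U_{2(r)}] = \proj k[w_1, \ldots, w_r]$ with $\deg w_i = p^{i-1}$, and this is by definition $\PG[1, p, \ldots, p^{r-1}]$.

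The principal subtlety I expect to navigate is the distinction between scheme-theoretic and set-theoretic equality: the closed embedding $\mathcal N_p(\mathfrak u_n) \hookrightarrow \mathcal N_p(\mathfrak b_n)$ is not an isomorphism before reducing because $\mathcal N_p(\mathfrak b_n)$ carries the non-reduced relations $d_i^p = 0$ on the diagonal coordinates. The argument must therefore be phrased in terms of underlying topological spaces (equivalently, $k$-points, since $k$ is algebraically closed and the schemes in question are of finite type over $k$) and the reduction step genuinely cannot be omitted.
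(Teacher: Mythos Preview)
Your proof is correct and follows essentially the same approach as the paper. The only notable difference is in the second half: the paper observes the group isomorphism $\U_2 \simeq \mathbb G_a$ (via $\left[\begin{smallmatrix} 0 & a \\ 0 & 0 \end{smallmatrix}\right] \mapsto a$), whence $\U_{2(r)} \simeq \mathbb G_{a(r)}$, and then quotes the already-computed grading from \cref{exVGrt}, whereas you compute $\mathcal N_p^{[r]}(\mathfrak u_2)$ directly from \cref{exVlier}; both routes are equally short and yield the same weighted projective space.
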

\begin{proof}
As in the proof of \cref{propLiePG} observe that the restricted nullcones of $\mathfrak b_n$ and $\mathfrak u_n$ are identical therefore $V(\B_{n(r)})_\red = V(\U_{n(r)})_\red$ follows immediately from the description in \cref{exVlier}.  In particular we get $\PG[\B_{2(r)}] = \PG[\U_{2(r)}]$.  For $\PG[\U_{2(r)}] = \mathbb P(1, p, \ldots, p^{r-1})$, the map $\left[\begin{smallmatrix} 0 & a \\ 0 & 0 \end{smallmatrix}\right] \mapsto a$ yields an isomorphism $\U_2 \simeq \mathbb G_a$, hence $\PG[\U_{2(r)}] \simeq \PG[\mathbb G_{a(r)}]$, and the result follows from the description in \cref{exVGrt}.
\end{proof}

\begin{Cor}
The support varieties $\PG[\B_{2(r)}]$ and $\PG[\U_{2(r)}]$ are regular if and only if $r = 1, 2$.
\end{Cor}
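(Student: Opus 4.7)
The plan is to use the previous proposition to identify both $\PG[\B_{2(r)}]$ and $\PG[\U_{2(r)}]$ with the weighted projective space $\mathbb P(1, p, \ldots, p^{r-1})$ and then decide when the latter is regular. For $r = 1$ the space is a single point and so is trivially regular. For $r = 2$ I would cover $\mathbb P(1, p) = \proj k[w_1, w_2]$ by the two affine charts $D_+(w_1) = \spec k[w_2/w_1^p]$ and $D_+(w_2) = \spec k[w_1^p/w_2]$, each isomorphic to $\mathbb A^1$; since the transition map on the overlap is inversion, the two charts glue to give $\mathbb P(1, p) \simeq \mathbb P^1$, which is regular.

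The substantive case is $r \geq 3$, where I would exhibit a singular point at the vertex $v_\ast = [0 : \cdots : 0 : 1]$, which lies in the chart $D_+(w_r)$. The coordinate ring $A$ of this chart is the degree-zero subring of $k[w_1, \ldots, w_r][w_r^{-1}]$, spanned as a $k$-vector space by the monomials $w_1^{a_1} \cdots w_{r-1}^{a_{r-1}} w_r^{-m}$ with $a_i, m \geq 0$ and $\sum_{i=1}^{r-1} a_i p^{i-1} = m p^{r-1}$. The vertex $v_\ast$ corresponds to the maximal ideal $\mathfrak m \subset A$ generated by the monomials with $m \geq 1$. Since the product of any two such monomials with $m$-values $m_1, m_2 \geq 1$ has $m$-value $m_1 + m_2 \geq 2$, the ideal $\mathfrak m^2$ is contained in the $k$-span of the monomials with $m \geq 2$. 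Consequently the classes in $\mathfrak m / \mathfrak m^2$ of the $m = 1$ monomials are $k$-linearly independent, giving a lower bound for $\dim_k \mathfrak m / \mathfrak m^2$ in terms of the number of solutions $(a_1, \ldots, a_{r-1}) \in \mathbb Z_{\geq 0}^{r-1}$ of $\sum_{i=1}^{r-1} a_i p^{i-1} = p^{r-1}$.

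It then suffices to produce more such solutions than the dimension $r - 1$. I would exhibit the $r - 1$ axis solutions (with a single $a_i$ equal to $p^{r-i}$ and the rest zero) together with the additional mixed solution $(p^{r-1} - p, 1, 0, \ldots, 0)$, which is nonnegative, satisfies the required weighted equation, and is distinct from the axis solutions exactly when $r \geq 3$. This yields $\dim_k \mathfrak m / \mathfrak m^2 \geq r > r - 1 = \dim \mathbb P(1, p, \ldots, p^{r-1})$, so $A_{\mathfrak m}$ is not regular and the corollary follows. The only subtle point in the argument is justifying the inclusion $\mathfrak m^2 \subseteq \spn_k\{\text{monomials with }m \geq 2\}$, which turns the combinatorial count of primitive monomials into an honest lower bound on the embedding dimension; everything else is bookkeeping with the $m$-grading.
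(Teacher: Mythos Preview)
Your proposal is correct and takes a genuinely different route from the paper in the singular case $r \geq 3$. Both arguments reduce to the weighted projective space via the previous proposition and handle $r = 1, 2$ in essentially the same way (your two-chart computation for $r = 2$ is a hands-on version of the paper's observation that the $p^{\text{th}}$ Veronese of $k[w_1,w_2]$ is a polynomial ring in two variables). For $r \geq 3$, however, the paper works in the chart $D_+(w_3)$: it writes down explicit generators $\varpi_a = w_1^{ap}w_2^{p-a}/w_3$ and $\sigma_a = w_a/w_3^{p^{a-3}}$ and observes that the relations $\varpi_a\varpi_b = \varpi_c\varpi_d$ for $a+b=c+d$ present the ring as (the affine cone over a degree-$p$ rational normal curve)$\,\times\,\mathbb A^{r-3}$, which is singular at the cone point. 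You instead work in $D_+(w_r)$ and exploit the natural $\mathbb N$-grading of the coordinate ring by the exponent $m$ of $w_r^{-1}$, reducing the question to a pure count of lattice points on the hyperplane $\sum_{i<r} a_i p^{i-1} = p^{r-1}$. Your approach is more elementary in that it never needs to identify the relations among the generators; the paper's approach, in exchange, yields an explicit presentation of the affine chart that could be reused elsewhere. The step you flag as subtle---that $\mathfrak m^2$ lies in the span of monomials with $m \geq 2$---is in fact immediate from the additivity of the $m$-grading, so the argument goes through cleanly.
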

\begin{proof}
This follows from the previous proposition once we know that the weighted projective space $\mathbb P(1, p, \ldots, p^{r-1})$ is regular if and only if $r \in \set{1, 2}$.  To see this we first observe that $\mathbb P(1) = \proj k[w_1] \simeq \spec k$ is regular and the $p^\text{th}$ truncation of $k[w_1, w_2]$ is $k[w_1^p, w_2] \simeq k[s, t]$ therefore $\mathbb P(1, p) \simeq \mathbb P^1$ is regular.  For $r > 2$ one checks directly that the non-vanishing locus of $w_3$ is the spectrum of the sub-ring
\[k[\varpi_0, \varpi_1, \ldots, \varpi_p, \sigma_4, \sigma_5, \ldots, \sigma_r] \subseteq k[w_1, \ldots, w_r]_{w_3},\]
where $\varpi_a = \frac{w_1^{ap}w_2^{p-a}}{w_3}$ and $\sigma_a = \frac{w_a}{w_3^{p^{a - 3}}}$.  We can think of this as a polynomial ring, with each $\varpi_a$ and $\sigma_a$ an indeterminant, modulo relations.  These relations are generated by $\varpi_a\varpi_b - \varpi_c\varpi_d$ when $a + b = c + d$; hence they are homogeneous and the spectrum of this ring has a singular point at the origin.  Thus $\mathbb P(1, p, \ldots, p^{r-1})$ has non-zero singular locus when $r > 2$.
\end{proof}

\section{DG-algebras and BGG correspondence} \label{secDG}

Let $k$ be a field of characteristic $2$ and define
\[\Lambda = \frac{k[y_1, \ldots, y_n]}{(y_1^2, \ldots, y_n^2)} \qquad \text{and} \qquad S = k[x_1, \ldots, x_n],\]
graded via $|x_i| = |y_i| = 1$ so that $\mathbb P^n = \proj S$.  We refer the reader to Appendix A of Okonek et al.~\cite{ossVectorBundles} for a detailed discussion of the classical Bern\u{s}te\u{\i}n, Gel'fand, Gel'fand (BGG) correspondence.  It is routine to check that the results therein hold in characteristic $2$ and give equivalences:
\[\catd{\Lambda}{\underline{grmod}} \equiv \Db{\catd{\Lambda}{grmod}}/\catd{\Lambda}{Perf} \equiv \Db{\catd{S}{grmod}}/\catd{S}{fdim} \equiv \Db{\coh(\mathbb P^n)}.\]
Here $\catd{\Lambda}{\underline{grmod}}$ is the graded stable module category of graded modules modulo projectives, $\Db{-}$ indicates the bounded derived category, $\catd{\Lambda}{Perf} \subseteq \Db{\catd{\Lambda}{grmod}}$ are the perfect complexes (i.e.\ those complexes that are isomorphic to a bounded complex of projective modules), and $\catd{S}{fdim} \subseteq \Db{\catd{S}{grmod}}$ are the complexes that are isomorphic to a bounded complex of finite dimensional modules.  In this section we give a generalization of these equivalences to DG-modules.  We indicate how the functor $\Ho$ factors through the resulting equivalence and use this to obtain further results.

Consider $S$ to be a DG-algebra with zero differential (we will use a subscript to indicate the grading of DG-$S$-modules and parenthesized superscripts to indicate the position in a chain complex of $\Lambda$-modules).  By $\D{S}$ we mean the derived category of DG-$S$-modules and let $\D{\catd{\Lambda}{Mod}}$ be the unbounded derived category of all $\Lambda$-modules (not necessarily finitely generated).  Given a chain complex $M$ of $\Lambda$-modules we define $G(M) = S \otimes_k M$.  We give $G(M)$ the structure of a DG-$S$-module via the grading $S_i \otimes M^{(j)} \subseteq G(M)_{i + j}$ and differential $\partial_{G(M)} = \id[S] \otimes \partial_M + \sum_\ell x_\ell \otimes y_\ell$.

\begin{Lem} \label{lemGdef}
$G\colon\D{\catd{\Lambda}{Mod}} \to \D{S}$ is a well defined functor.
\end{Lem}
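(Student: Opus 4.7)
The proof naturally breaks into three parts, one for each obligation: (i) $G(M)$ is genuinely a DG-$S$-module, (ii) $G$ is functorial on chain maps of $\Lambda$-modules, and (iii) $G$ descends to the derived category.

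My plan for (i) is to expand $\partial_{G(M)}^{\,2}$ into four pieces. The piece $(\id \otimes \partial_M)^2$ vanishes because $\partial_M^2 = 0$. The cross term $(\id \otimes \partial_M)\circ(\sum_\ell x_\ell \otimes y_\ell) + (\sum_\ell x_\ell \otimes y_\ell)\circ(\id \otimes \partial_M)$ vanishes using that $\partial_M$ is $\Lambda$-linear (so $\partial_M(y_\ell v) = y_\ell\partial_M(v)$) together with $1+1=0$ in $k$. The remaining piece $(\sum_\ell x_\ell \otimes y_\ell)^2 = \sum_{\ell,m} x_\ell x_m \otimes y_\ell y_m$ vanishes because the diagonal terms satisfy $y_\ell^2 = 0$ and the off-diagonal terms pair up by commutativity of $S$ and $\Lambda$ and are again killed by characteristic $2$. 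For (ii), given a $\Lambda$-linear chain map $f\colon M \to N$, I will set $G(f) = \id_S \otimes f$, check $S$-linearity from the definition, and verify compatibility with the differential by using separately $f\partial_M = \partial_N f$ (chain map) and $f(y_\ell v) = y_\ell f(v)$ ($\Lambda$-linearity).

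The substantive step is (iii): showing that $G$ carries quasi-isomorphisms to quasi-isomorphisms, so that it factors through the Verdier quotient defining $\D{\catd{\Lambda}{Mod}}$. My plan is to equip $G(M)$ with the decreasing filtration
\[F^pG(M) = \bigoplus_{i \geq p} S_i \otimes_k M,\]
which is preserved by both summands of $\partial_{G(M)}$ since $\id \otimes \partial_M$ preserves $S$-degree and $\sum_\ell x_\ell \otimes y_\ell$ strictly raises it. On the associated graded the twist disappears, leaving the untwisted complex $S_p \otimes_k M$ whose homology is $S_p \otimes_k H^*(M)$ by $k$-flatness. Any quasi-isomorphism $f\colon M \to N$ therefore induces an isomorphism on $E_1$ of the associated spectral sequence, so standard convergence yields a quasi-isomorphism $G(f)$.

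The main obstacle will be making the convergence rigorous on the unbounded derived category, since in each total degree the filtration $F^pG(M)_n$ is bounded above (by $F^0$) but need not terminate when $M$ is unbounded below. I plan to reduce to this situation by writing an arbitrary $M$ as the filtered colimit of its brutal truncations $\tau_{\geq -N}M$ (each bounded below, where my spectral sequence converges cleanly), noting that $G$ commutes with filtered colimits of $\Lambda$-complexes since it is $S \otimes_k -$ equipped with a degreewise differential, and invoking that filtered colimits of quasi-isomorphisms are quasi-isomorphisms. An equivalent alternative, which I will mention as a backup, is to replace $M$ by a K-projective resolution (available in $\D{\catd{\Lambda}{Mod}}$ by Spaltenstein) and to observe that on K-projectives the bounded-below spectral sequence argument applies after a shift of indexing.
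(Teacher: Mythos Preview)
Your parts (i) and (ii) are correct; the paper takes these for granted. The substance is (iii), and here your filtration $F^pG(M) = \bigoplus_{i \geq p} S_i \otimes_k M$ is exactly the column filtration of the double complex underlying $G(M)$ --- the same structure the paper exploits.

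The gap is in your handling of unbounded complexes. If $f\colon M \to N$ is a quasi-isomorphism, the brutal truncations $\tau_{\geq -N} f$ are \emph{not} quasi-isomorphisms in general (brutal truncation corrupts cohomology at the cutoff degree), so you cannot deduce from the bounded-below case that each $G(\tau_{\geq -N} f)$ is a quasi-isomorphism, and the filtered-colimit step collapses. Your K-projective backup does not repair this either: K-projective resolutions of unbounded complexes are themselves unbounded, so the same convergence obstruction recurs and no ``shift of indexing'' makes the filtration degreewise finite.

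The paper's route sidesteps all of this. It first reduces, using exactness of $S \otimes_k -$, to showing that $G$ sends acyclic complexes to acyclic DG-$S$-modules (the reduction is cleanest via the mapping cone: $f$ is a quasi-isomorphism iff its cone is acyclic, and $G$ preserves cones). For $M$ acyclic the paper then observes that $G(M)$ is the \emph{direct-sum} total complex of the double complex $J \otimes_\Lambda M$, where $J = S \otimes_k \Lambda$ sits in columns $p \geq 0$ and each column $S_p \otimes_k M^\bullet$ is exact. The Acyclic Assembly Lemma (Weibel~2.7.3) then gives acyclicity of $\mathrm{Tot}^{\oplus}$ directly, with no truncation or colimit: any cycle in a direct-sum total complex has only finitely many nonzero components, and the half-plane condition lets the standard staircase chase terminate. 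This is precisely the convergence mechanism your spectral sequence is reaching for, packaged so that unboundedness of $M$ causes no extra work.
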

\begin{proof}
We clearly have a well defined functor from the category of chain complexes of $\Lambda$-modules to the category of DG-$S$-modules, we need only show that it preserves quasi-isomorphisms.  A simple diagram chase shows that a map of chain complexes of $\Lambda$-modules, or a map of DG-$S$-modules, is a quasi-isomorphism if and only if both its kernel and cokernel are acyclic.  As $G$ is given by tensoring over a field it is exact therefore we reduce to showing that $G$ takes acyclic complexes to acyclic DG-$S$-modules.

Let $J = S \otimes_k \Lambda$ be the DG-$S$-module with grading $J_n = S_n \otimes \Lambda$ and differential $\partial_J = \sum_\ell x_\ell \otimes y_\ell$ and assume that $M$ is an acyclic complex of $\Lambda$-modules.  We observe that both $J$ and $G(M)$ have the structure of a complex of $\Lambda$-modules and one easily sees that $G(M)$ is the direct sum total complex of the double complex obtained by tensoring $J$ and $M$ over $\Lambda$.  As $J$ is positively graded and $M$ is exact the Acyclic Assembly Lemma~\cite[Lemma 2.7.3]{weibel} gives that $G(M)$ is also exact.
\end{proof}

Given an object $X$ in a triangulated category $\mathsf T$ we let $\thick[{\mathsf T}]{X}$ (or just $\thick{X}$ when there is no confusion) be the intersection of all thick subcategories that contain $X$, i.e., the smallest thick subcategory containing $X$.

\begin{Thm}
$G$ restricts to an equivalence $\Db{\catd{\Lambda}{mod}} \equiv \thick[\D{S}]{S}$.
\end{Thm}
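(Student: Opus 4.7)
The plan is to verify that $G$ sends the generator $k$ of $\Db{\catd{\Lambda}{mod}}$ to $S$, that $G$ is triangulated and fully faithful on $\Db{\catd{\Lambda}{mod}}$, and then to deduce the identification of the essential image with $\thick[\D{S}]{S}$.

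First, I would check that $G(k) \simeq S$: since $y_\ell$ acts as zero on the trivial $\Lambda$-module $k$, the cross term $\sum_\ell x_\ell \otimes y_\ell$ in $\partial_{G(k)}$ vanishes, so $G(k)$ is just $S$ as a DG-$S$-module (with zero differential). The functor $G$ is tensoring over a field followed by a change of differential and is therefore exact on chain complexes; \Cref{lemGdef} ensures it descends to derived categories, so $G$ is triangulated. Next, since $\Lambda$ is a finite-dimensional local algebra with residue field $k$, every finitely generated $\Lambda$-module has a composition series with simple subquotients $k$, and a standard inductive argument on composition length and complex width gives
\[\thick[\Db{\catd{\Lambda}{mod}}]{k} = \Db{\catd{\Lambda}{mod}}.\]
Combined with $G(k) = S$, this shows that $G$ restricts to a triangulated functor $\Db{\catd{\Lambda}{mod}} \to \thick[\D{S}]{S}$.

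The key step is fully faithfulness. By a two-variable dévissage along the thick generation by $k$, together with the fact that $\hom$ in a triangulated category is cohomological in each argument and the five lemma, it suffices to prove that
\[G_{\ast}\colon \mathrm{Ext}^{n}_{\Lambda}(k, k) \longrightarrow \hom_{\D{S}}(S, S[n])\]
is an isomorphism for every $n \in \mathbb Z$. The right-hand side equals $H^n(S) = S_n$, because $S$ is its own free DG resolution and has zero differential. The left-hand side is the classical Koszul dual of the exterior algebra $\Lambda$, which is exactly $S$ as a graded ring. The comparison map $G_{\ast}$ can be realized concretely by applying $G$ to a Koszul resolution of $k$ over $\Lambda$ assembled from the object $J = S \otimes_k \Lambda$ of \Cref{lemGdef}, viewed as a complex of free $\Lambda$-modules; computing cohomology then recovers the Koszul identification on the nose.

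Essential surjectivity onto $\thick[\D{S}]{S}$ is then formal: the essential image of a fully faithful triangulated functor is closed under shifts and cones, and since $\Db{\catd{\Lambda}{mod}}$ is idempotent complete (being the bounded derived category of modules over a finite-dimensional algebra), fully faithfulness transfers idempotent completeness to the image, making it a thick subcategory containing $G(k) = S$, and hence containing all of $\thick[\D{S}]{S}$. The main obstacle will be in the Ext computation: although it is standard that both groups are abstractly isomorphic to $S_n$, one must carefully track the Koszul resolution through the definition of $G$ to see that the specific map $G_{\ast}$ realizes this isomorphism rather than some other endomorphism of $S$.
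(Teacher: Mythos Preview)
Your proposal is correct and follows a genuinely different route from the paper. The paper does not argue by d\'evissage on the generator $k$; instead it invokes an external equivalence. Concretely, the paper introduces the DG-$S$-module $I = S^\ast \otimes_k \Lambda$, observes that $\hom_\Lambda^\bullet(I,-)\colon \KInj{\Lambda} \to \D{S}$ is an equivalence (by a minor modification of a theorem of Benson--Iyengar--Krause for $J = S \otimes_k \Lambda$), and then restricts to compact objects using Krause's identification $\KInj{\Lambda}^{\mathsf c} \equiv \Db{\catd{\Lambda}{mod}}$ and the standard fact $\D{S}^{\mathsf c} = \thick{S}$. The final step is a direct computation showing that on bounded complexes of finitely generated modules the functor $\hom_\Lambda^\bullet(I,-)$ agrees with $G$, because the product $\prod_i S_{-i} \otimes_k M_{d+i}$ collapses to a finite direct sum.

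Your approach trades the citation of the big equivalence for an explicit Ext calculation, and is in that sense more self-contained. The obstacle you flag is real but mild: since $G_\ast$ is a graded $k$-algebra homomorphism $S \to S$, it suffices to check degree~$1$, and applying $G$ to the two-dimensional extension $E_\ell$ on which only $y_\ell$ acts nontrivially shows that $G(E_\ell)$ is the cone on multiplication by $x_\ell$, so $G_\ast$ sends the $\ell$-th Koszul generator to $x_\ell$. The paper's approach, by contrast, avoids this computation entirely but requires the homotopy category of injectives and the compactly generated machinery; it also sets up the comparison $G \simeq \hom_\Lambda^\bullet(I,-)$ that is reused later in the section when matching with the classical BGG picture.
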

\begin{proof}
Consider $I = S^\ast \otimes_k \Lambda$, where by $S^\ast$ we mean the graded dual defined by $(S^\ast)_n = \hom_k(S_{-n}, k)$.  This is a DG-$S$-module with grading $I_n = (S^\ast)_n \otimes_k \Lambda$ and differential $\partial_I = \sum_\ell x_\ell \otimes y_\ell$.  Note that $I$ also has the structure of a complex of $\Lambda$-modules, so if $M$ is also a complex of $\Lambda$-modules we may define $\hom_\Lambda^d(I, M)$ to be the set of all $\Lambda$-module homomorphisms of degree $d$ from $I$ to $M$ (not necessarily commuting with differentials).  Using the $S$-module structure of $I$ and the regular representation we get that $\hom_\Lambda^\bullet(I, M) = \bigoplus_d\hom_\Lambda^d(I, M)$ is a graded $S$-module; in fact it is a DG-$S$-module with differential $\phi \mapsto \partial_M\phi + \phi\partial_I$.

Let $\KInj{\Lambda}$ be the homotopy category of complexes of injective $\Lambda$-modules and, as in the proof of \cref{lemGdef}, let $J = S \otimes_k \Lambda$.  We have a well defined functor $\hom_\Lambda^\bullet(I, -)\colon\KInj{\Lambda} \to \D{S}$.  Benson et al.~\cite[Theorem 5.5]{bikLocCohSup} prove that the analogous functor $\hom_\Lambda^\bullet(J, -)\colon\KInj{\Lambda} \to \D{S}$ is an equivalence.  With only minor modifications the same proof shows that $\hom_\Lambda^\bullet(I, -)\colon\KInj{\Lambda} \to \D{S}$ is an equivalence as well.  The natural inclusion $\KInj{\Lambda} \to \D{\catd{\Lambda}{Mod}}$ restricts to an equivalence of categories $\KInj{\Lambda}^{\mathsf c} \equiv \Db{\catd{\Lambda}{mod}}$, where for any triangulated category $\mathsf T$ we let $\mathsf{T^c}$ denote the full subcategory of compact objects in $\mathsf T$ \cite[Proposition 2.3]{krauseStDerCat}.  The complex $I$ is a bounded above complex of projective modules, therefore $\hom_\Lambda^\bullet(I, -)$ preserves quasi-isomorphisms and consequently factors through this inclusion to give an equivalence $\hom_\Lambda^\bullet(I, -)\colon\Db{\catd{\Lambda}{mod}} \to \D{S}^{\mathsf c}$.
\[\xymatrix{\D{S} && \KInj{\Lambda} \ar[ll]_\equiv^{\hom_\Lambda^\bullet(I, -)} \ar[rr] && \D{\catd{\Lambda}{Mod}} \\ \D{S}^{\mathsf c} \ar@{^(->}[u] && \KInj{\Lambda}^{\mathsf c} \ar@{^(->}[u] \ar[ll]_\equiv \ar[rr]^\equiv && \Db{\catd{\Lambda}{mod}} \ar@/^25pt/[llll]^{\hom_\Lambda^\bullet(I, -)} \ar@{^(->}[u]}\]

All that's left is to identify $\D{S}^{\mathsf c}$ with $\thick{S}$ and show that $\hom_\Lambda^\bullet(I, -)$ is naturally isomorphic to $G$.  The first claim is well known, see for example Benson et al.~\cite[Theorem 2.2]{bikLocCohSup}.  For the second it suffices to show that $\hom_\Lambda^\bullet(I, M) \simeq G(M)$ when $M$ is a bounded complex of finitely generated $\Lambda$-modules.  For this we observe that the $d^\text{th}$ graded piece of $\hom_\Lambda^\bullet(I, M)$ is
\[\hom_\Lambda^d(S^\ast \otimes_k \Lambda, M) = \prod_i\hom_k((S^\ast)_i, M_{d + i}) = \prod_iS_{-i} \otimes_k M_{d + i}.\]
As $M$ is bounded, $M_{d + i}$ is nonzero for only finitely many $i$.  The product is therefore a direct sum and we get $\hom_\Lambda^d(I, M) \simeq (S \otimes_k M)_d = G(M)_d$.  One easily checks that this respects the differentials and is natural in $M$.
\end{proof}

There is an equivalence $\catd{\Lambda}{\underline{mod}} \equiv \Db{\catd{\Lambda}{mod}}/\catd{\Lambda}{Perf}$ defined by sending a $\Lambda$-module $M$ to the complex
\[\cdots \to 0 \to M \to 0 \to \cdots\]
with $M$ in the zeroth position and a zero in all other positions \cite[Theorem 2.1]{rickardStabEquiv}.  Using the natural inclusion $\catd{\Lambda}{\underline{grmod}} \hookrightarrow \catd{\Lambda}{\underline{mod}}$ we get the commutative diagram below.
\[\xymatrix{\catd{\Lambda}{\underline{grmod}} \ar@{^(->}[d] \ar[r]^-\equiv & \frac{\Db{\catd{\Lambda}{grmod}}}{\catd{\Lambda}{Perf}} \ar@{^(->}[d] \ar[r]^-\equiv & \frac{\Db{\catd{S}{grmod}}}{\catd{S}{fdim}} \ar[r]^\equiv & \Db{\coh(\mathbb P^n)} \\ \catd{\Lambda}{\underline{mod}} \ar[r]^-\equiv & \frac{\Db{\catd{\Lambda}{mod}}}{\catd{\Lambda}{Perf}}}\]

By $\Lambda \in \Db{\catd{\Lambda}{mod}}$ we mean its image under the equivalence above.  The perfect complexes are then exactly the elements of $\thick{\Lambda} \subseteq \Db{\catd{\Lambda}{mod}}$ \cite[Theorem 2.2]{bikLocCohSup} and $G(\Lambda) = J$, the DG-$S$-module from the proof of \cref{lemGdef}.  One can show that $J$ is quasi-isomorphic to the trivial DG-$S$-module $k$ (c.f.\ the map $\eta$ in Theorem 5.5 of Benson et al.~\cite{bikLocCohSup}) hence $G(\Lambda) \simeq k$.  This implies that $G$ restricts to an equivalence $\catd{\Lambda}{Perf} \equiv \thick{k}$ and consequently induces an equivalence between the quotients $\Db{\catd{\Lambda}{mod}}/\catd{\Lambda}{Perf} \equiv \thick{S}/\thick{k}$.  From the diagram above we see there is an induced inclusion $\Db{\catd{S}{grmod}}/\catd{S}{fdim} \hookrightarrow \thick{S}/\thick{k}$.  One can check that this is defined by mapping a complex $M$ to the total complex $\bigoplus_iM^{(i)}$ with grading given by declaring the elements of $M^{(i)}_j$ to be homogeneous of degree $i + j$.  The diagram is now as follows.
\[\xymatrix{\catd{\Lambda}{\underline{grmod}} \ar@{^(->}[d] \ar[r]^-\equiv & \frac{\Db{\catd{\Lambda}{grmod}}}{\catd{\Lambda}{Perf}} \ar[r]^-\equiv \ar@{^(->}[d] & \frac{\Db{\catd{S}{grmod}}}{\catd{S}{fdim}} \ar@{^(->}[d] \ar[r]^\equiv & \Db{\coh(\mathbb P^n)} \\ \catd{\Lambda}{\underline{mod}} \ar[r]^-\equiv & \frac{\Db{\catd{\Lambda}{mod}}}{\catd{\Lambda}{Perf}} \ar[r]^-\equiv & \frac{\thick{S}}{\thick{k}}}\]

If $M$ is a DG-$S$-module then the homology, $H(M)$, of $M$ is a graded module for the homology of $S$, which is just $S$.  A DG-$S$-module is contained in $\thick{S} \subseteq \D{S}$ (resp.\ $\thick{k}$) if and only if its homology is finitely generated (resp.\ finite dimensional) as an $S$-module~\cite[7.5]{abimHomPerf}.  Thus homology induces a functor $\thick{S}/\thick{k} \to \catd{S}{grmod}/\catd{S}{fdim} \equiv \coh(\mathbb P^n)$.  One can check that the functor $\Db{\coh(\mathbb P^n)} \to \coh(\mathbb P^n)$ defined by $\mathscr{G} \mapsto \bigoplus_iH^i(\mathscr{G})(-i)$ completes the diagram below.
\[\xymatrix{\catd{\Lambda}{\underline{grmod}} \ar@{^(->}[d] \ar[r]^-\equiv & \frac{\Db{\catd{\Lambda}{grmod}}}{\catd{\Lambda}{Perf}} \ar[r]^-\equiv \ar@{^(->}[d] & \frac{\Db{\catd{S}{grmod}}}{\catd{S}{fdim}} \ar[r]^\equiv \ar@{^(->}[d] & \Db{\coh(\mathbb P^n) \ar[d]} \\ \catd{\Lambda}{\underline{mod}} \ar[r]^-\equiv & \frac{\Db{\catd{\Lambda}{mod}}}{\catd{\Lambda}{Perf}} \ar[r]^-\equiv & \frac{\thick{S}}{\thick{k}} \ar[r]^-H & \coh(\mathbb P^n)}\]

\begin{Prop}
The functor $\catd{\Lambda}{\underline{mod}} \to \coh(\mathbb P^n)$ along the bottom row of the above diagram is identical to $\Ho\colon\catd{\mathbb G_{a(1)}^n}{\underline{mod}} \to \coh(\mathbb P^n)$.
\end{Prop}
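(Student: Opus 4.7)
The plan is to trace a $\Lambda$-module $M$ around the bottom row of the diagram and directly identify the resulting sheaf with $\Ho(M)$.  The starting point is that in characteristic $2$ the $k$-algebra $\Lambda = k[y_1,\ldots,y_n]/(y_1^2,\ldots,y_n^2)$ literally coincides with the group ring $k\mathbb G_{a(1)}^n = k[u_1,\ldots,u_n]/(u_1^2,\ldots,u_n^2)$ from \cref{exGr}, via the identification $y_\ell \leftrightarrow u_\ell$.  Under this identification the two categories of ungraded modules, and their stable quotients, agree, and the $\mathbb P^n = \proj k[x_1,\ldots,x_n]$ of this section agrees with $\PG[\mathbb G_{a(1)}^n]$ as described in \cref{exVE}.

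Next I would compute the bottom composition explicitly on $M$.  Viewed as a complex concentrated in chain degree zero, $M$ is sent by $G$ to the DG-$S$-module $G(M) = S \otimes_k M$ with grading $G(M)_d = S_d \otimes_k M$ and differential $\theta = \sum_\ell x_\ell \otimes y_\ell$.  Under the identification of the previous paragraph this is exactly the graded module $k[V(\mathbb G_{a(1)}^n)] \otimes_k M$ carrying the global operator $\Theta_M = \sum_\ell x_\ell \otimes u_\ell$ from \cref{secRev}; note in particular that $\theta$ raises the internal degree by $1 = p^{r-1}$, matching the degree shift in the definition of $\Theta_M$.  The homology functor $H$ then produces the graded $S$-module $\ker\theta/\im\theta$, whose sheafification on $\mathbb P^n$ is, by construction, the image of $M$ under the bottom composition.

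To finish I would compare with $\Ho(M)$.  Because $p = 2$ one has $\Ho(M) = \gker{M}/\gim[p-1]{M} = \gker{M}/\gim{M}$, which by definition is the sheafification of $\ker\Theta_M/\im\Theta_M$ (using exactness of sheafification to commute it past the quotient).  Combining this with the identification $\theta = \Theta_M$ from the previous paragraph shows that the two functors agree on objects, and naturality is then immediate since each ingredient of the construction---the identification $\Lambda \cong k\mathbb G_{a(1)}^n$, the functor $G$, homology, and sheafification---is natural in $M$.  The only nontrivial bookkeeping is lining up the internal gradings and confirming that the two descriptions of the differential really do coincide on the nose, which I expect to be the main (though still mild) obstacle; once $\theta = \Theta_M$ is verified the conclusion is immediate.
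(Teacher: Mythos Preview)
Your proposal is correct and follows essentially the same argument as the paper: identify $\Lambda$ with $k\mathbb G_{a(1)}^n$, apply $G$ to $M$ viewed as a complex concentrated in degree zero to obtain $S\otimes_k M$ with differential $\sum_\ell x_\ell\otimes y_\ell$, recognize this as the graded module underlying $\widetilde M$ with differential $\Theta_M$, and then observe that taking homology and sheafifying yields $\Ho(M)$ (using $p=2$ so that $\gim[p-1]{M}=\gim{M}$). The paper's proof is slightly terser but the content is identical.
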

\begin{proof}
We have $k\mathbb G_{a(1)}^r \simeq \Lambda$ so $\mathbb G_{a(1)}^r$-modules are equivalent to $\Lambda$-modules.  If $M$ is any such module then applying $G$ to the complex
\[\cdots \to 0 \to M \to 0 \to \cdots\]
yields the DG-$S$-module $S \otimes_k M$ with grading $|s \otimes m| = |s|$ and differential $\partial_{S \otimes_k M} = \sum_\ell x_\ell \otimes y_\ell$.  As an $S$-module this is exactly the graded module corresponding to the sheaf $\widetilde M$ and $\partial_{S \otimes_k M}$ is the homomorphism corresponding to $\Theta_M$.  Thus taking homology and then the associated sheaf yields $\Ho(M)$.
\end{proof}

Observe that this gives an alternate proof of \cref{corRegProj} in this case, for if $\Ho(M) = 0$ then $G(M)$ has finite dimensional homology and therefore is an element of $\thick{k}$, hence the equivalence gives $M \simeq 0$ in the stable module category.  We also deduce the following results about $\Ho$.  Note that in characteristic $2$ we have $\F_1 = \Ho$ so Benson and Pevtsova have proven a stronger version of \eqref*{thmItemSurj}~\cite{benPevtVectorBundles}.

\begin{Thm}
Let $k$ have characteristic $2$ and consider the group $G = \mathbb G_{a(1)}^n$.  The corresponding functor $\Ho$ is...
\begin{enumerate}
\item essentially surjective on objects, \label{thmItemSurj}
\item essentially surjective on maps, i.e., for all sheaf homomorphisms $\phi$ there is a module homomorphism $\psi$ such that $\Ho(\psi)$ equals $\phi$ up to isomorphisms in the domain and codomain, \label{thmItemSurjMap}
\item not a faithful functor. \label{thmItemNfaith}
\end{enumerate}
\end{Thm}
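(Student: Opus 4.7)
My plan is to exploit the factorization $\Ho = H \circ \Psi$ established in the discussion preceding the proposition, where $\Psi\colon \catd{\Lambda}{\underline{mod}} \xrightarrow{\equiv} \thick{S}/\thick{k}$ is the equivalence along the bottom row of the final diagram and $H$ is the homology-then-sheafify functor $\thick{S}/\thick{k} \to \coh(\mathbb P^n)$. Moreover, the paper provides a fully faithful inclusion $\iota\colon \Db{\coh(\mathbb P^n)} \hookrightarrow \thick{S}/\thick{k}$ along whose image $H$ coincides with the total-cohomology functor $T\colon \mathscr G \mapsto \bigoplus_i \mathcal H^i(\mathscr G)(-i)$. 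Since $\Psi$ is an equivalence, all three claims for $\Ho$ reduce to the corresponding statements about $H$, which I verify by working with $T \circ \iota$.

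For \eqref*{thmItemSurj}, given $\mathscr F \in \coh(\mathbb P^n)$, view $\mathscr F$ as a complex concentrated in chain degree zero in $\Db{\coh(\mathbb P^n)}$; then $T(\mathscr F) = \mathscr F$, so $M := \Psi^{-1}(\iota(\mathscr F))$ satisfies $\Ho(M) \simeq \mathscr F$.  For \eqref*{thmItemSurjMap}, any morphism $\phi\colon \mathscr F \to \mathscr G$ of coherent sheaves is itself a morphism between complexes concentrated in degree zero in $\Db{\coh(\mathbb P^n)}$, and $T(\phi) = \phi$; transporting $\phi$ through $\iota$ and then $\Psi^{-1}$ produces the desired $\psi\colon M \to N$ with $\Ho(\psi) = \phi$ after identifying $\Ho(M) \simeq \mathscr F$ and $\Ho(N) \simeq \mathscr G$. (If one prefers to start from specified $M, N$, the same argument via Serre's equivalence $\catd{S}{grmod}/\catd{S}{fdim} \simeq \coh(\mathbb P^n)$ lifts $\phi$ to an honest graded map $F' \to G$ where $F'$ agrees with $F$ in large degrees, giving the ``up to isomorphism'' clause.)

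For \eqref*{thmItemNfaith}, it suffices to produce a nonzero morphism in $\Db{\coh(\mathbb P^n)}$ that $T$ annihilates, then transport it through $\iota$ and $\Psi^{-1}$, both of which preserve non-zeroness.  Any nonzero class $\eta \in \mathrm{Ext}^1(\mathscr F, \mathscr G)$ for coherent sheaves $\mathscr F, \mathscr G$ yields a nonzero morphism $\eta\colon \mathscr F \to \mathscr G[1]$ in $\Db{\coh(\mathbb P^n)}$; however, $T(\eta) = \bigoplus_i \mathcal H^i(\eta)(-i)$ vanishes because on each degree the source $\mathcal H^i(\mathscr F)$ and the target $\mathcal H^i(\mathscr G[1]) = \mathcal H^{i+1}(\mathscr G)$ are never simultaneously nonzero when $\mathscr F$ and $\mathscr G$ are ordinary sheaves. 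When $\mathbb P^n$ has positive dimension such a class always exists, e.g., $\mathrm{Ext}^1(\mathcal O_p, \mathcal O_p) \neq 0$ at a closed point $p$; pulling back through $\iota$ and $\Psi^{-1}$ supplies a nonzero $\psi$ in $\catd{\Lambda}{\underline{mod}}$ with $\Ho(\psi) = 0$.

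The main nontrivial ingredient is the identification of $H$ with the total-cohomology functor $T$ along the image of $\iota$, together with the fully faithfulness of $\iota$; both are asserted in the discussion immediately preceding the proposition, so once these are granted the proof is essentially a formal manipulation of equivalences and fully faithful functors. I note that \eqref*{thmItemNfaith} requires $\mathbb P^n$ of positive dimension (that is, at least two variables $y_i$ on the $\Lambda$-side); the one-variable case degenerates to $\mathbb P^n = \spec k$ carrying no nontrivial $\mathrm{Ext}^1$, and $\Ho$ is in fact faithful in that case.
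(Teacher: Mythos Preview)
Your argument for parts \eqref*{thmItemSurj} and \eqref*{thmItemSurjMap} is essentially identical to the paper's: both reduce to the observation that the total-cohomology functor $T\colon \Db{\coh(\mathbb P^n)} \to \coh(\mathbb P^n)$ is the identity on sheaves and sheaf maps concentrated in degree zero, and then pull back through the equivalences in the diagram.

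For part \eqref*{thmItemNfaith} you take a genuinely different route.  The paper gives a concrete witness on the module side: it exhibits the nonzero stable map $k \to \Omega(k)$, computes $\Ho(k) = \mathcal O_{\mathbb P^n}$ and $\Ho(\Omega(k)) = \mathcal O_{\mathbb P^n}(-1)$ directly (citing Benson--Pevtsova), and then observes $\Gamma(\mathbb P^n,\mathcal O_{\mathbb P^n}(-1)) = 0$.  You instead stay on the sheaf side: pick any nonzero $\eta \in \mathrm{Ext}^1(\mathscr F,\mathscr G)$, note that $T$ kills it for degree reasons, and transport back through $\iota$ and $\Psi^{-1}$.  Your approach is cleaner in that it avoids computing $\Ho$ on any specific module, but it costs you the faithfulness of $\iota$, which you must invoke to know the transported morphism is still nonzero.  (You claim $\iota$ is fully faithful; the paper only writes $\hookrightarrow$.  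Happily your argument needs only faithfulness, and that does hold: a degree-zero graded map factoring through an ungraded projective factors through the graded projective cover via its degree-zero component.)  The paper's approach buys concreteness and independence from any subtlety about $\iota$; yours buys uniformity and makes the mechanism---$T$ kills extensions between sheaves---transparent.  Your closing remark that $n \geq 2$ is needed is well taken and applies equally to the paper's proof, since $\Gamma(\mathbb P^0,\mathcal O(-1)) \neq 0$.
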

\begin{proof}
Both \eqref*{thmItemSurj} and \eqref*{thmItemSurjMap} follow immediately from the diagram and the fact that these properties obviously hold for the functor $\bigoplus_iH^i(-)(-i)\colon\Db{\coh(\mathbb P^n)} \to \coh(\mathbb P^n)$.  For \eqref*{thmItemNfaith} consider the trivial module $k$ and its first Heller shift $\Omega(k)$.  As $k$ is simple the stable homomorphisms from $k$ to $\Omega(k)$ are exactly the homomorphisms $\hom_G(k, \Omega(k)) \simeq k$.  But $\Ho(\Omega(k)) = \mathcal O_{\mathbb P^n}(-1)$ and $\Ho(k) = \mathcal O_{\mathbb P^n}$~\cite[3.5]{benPevtVectorBundles} and $\hom_{\mathcal O_{\mathbb P^n}}(\mathcal O_{\mathbb P^n}, \mathcal O_{\mathbb P^n}(-1)) = \Gamma(\mathbb P^n, \mathcal O_{\mathbb P^n}(-1)) = 0$.
\end{proof}

\bibliographystyle{plain}

\end{document}